\documentclass[11pt,a4paper,leqno]{amsart}
\usepackage{amsfonts}
\usepackage{amsfonts}
\usepackage{amsfonts}
\usepackage{amsfonts}
\usepackage{amsfonts}
\usepackage{amsfonts}
\usepackage{amsfonts}
\usepackage{mathrsfs}
\usepackage{mathrsfs}
\usepackage{mathrsfs}
\usepackage{mathrsfs}
\usepackage{mathrsfs}
\usepackage{mathrsfs}
\usepackage{mathrsfs}
\usepackage{mathrsfs}
\usepackage{mathrsfs}
\usepackage{mathrsfs}
\usepackage{mathrsfs}
\usepackage{mathrsfs}
\oddsidemargin =10mm \evensidemargin =10mm
\textwidth =150mm \textheight =200mm

\usepackage{amsthm}
\usepackage{xspace}
\usepackage{graphics}
\usepackage{amsfonts,amssymb}
\usepackage{amsthm}
\usepackage[all]{xy}
\usepackage{stmaryrd}
\usepackage{color}

\newcommand{\be}{\begin{otherlanguage}{english}}
\newcommand{\ee}{\end{otherlanguage}}

\theoremstyle{definition}

\newtheorem{rem}{Remark}
\theoremstyle{plain}
\newtheorem{lem}{Lemma}
\newtheorem{prop}[lem]{Proposition}
\newtheorem{thm}[lem]{Theorem}
\newtheorem{cor}[lem]{Corollary}

\newtheorem*{thm*}{Theorem}

\theoremstyle{remark}

\numberwithin{equation}{subsection}

\newcommand{\beq}{\begin{equation}}
\newcommand{\eeq}{\end{equation}}








\begin{document}
\title{A generalization of the line translation theorem}

\author{Jian Wang}
\address{Department of Mathematics, Tsinghua University, Beijing 100084,
P.R.China.}\email{wjian05@mails.tsinghua.edu.cn}
\curraddr{\textsc{LAGA UMR 7539 CNRS, Universit\'e Paris 13, 93430
Villetaneuse, France.}} \email{wangjian@math.univ-paris13.fr}
\thanks{The authors have been supported by the project
111-2-01.}

\subjclass[2000]{37E45, 37E30}
\date{April 20, 2011}
\maketitle

\begin{abstract}Through the method of brick decomposition and
the operations on  essential topological lines, we generalize the
line translation theorem of Beguin, Crovisier, Le Roux \cite{B2} in
the case where the property of preserving a finite measure with
total support is replaced by the intersection property.
\end{abstract}\bigskip

\section{introduction}
Let $\mathbb{A}=\mathbb{R}/\mathbb{Z}\times \mathbb{R}$ be the open
annulus. We denote  $\pi$ the covering map
\begin{eqnarray*}
\pi\,:\, \mathbb{R}^2&\rightarrow& \mathbb{A} \\
(x,y)&\mapsto&(x+\mathbb{Z},y),
\end{eqnarray*}
and  $T$ the generator of the covering transformation group
\begin{eqnarray*}
T\,:\, \mathbb{R}^2&\rightarrow& \mathbb{R}^2 \\
(x,y)&\mapsto&(x+1,y).
\end{eqnarray*}
Write respectively $S$ and $N$ for the lower and the upper end of
$\mathbb{A}$. We call \emph{essential line} in $\mathbb{A}$ every
simple path, parametrized by $\mathbb{R}$, properly embedded in
$\mathbb{A}$, joining one end to the other one. We call
\emph{essential circle} in $\mathbb{A}$ every simple closed curve
which is not null-homotopic.

Let $f$ be a homeomorphism of $\mathbb{A}$. We say that $f$
satisfies the \emph{intersection property} if  any essential circle
in $\mathbb{A}$ meets its image by $f$. We denote the space of all
homeomorphisms of $\mathbb{A}$ which are isotopic to the identity as
$\mathrm{Homeo}_*(\mathbb{A})$ and its subspace whose elements
additionally have the intersection property as
$\mathrm{Homeo}_*^\wedge(\mathbb{A})$. If $X$ is a topological space
and $A$ is a subset of $X$, denote respectively by
$\mathrm{Int}(A)$, $\mathrm{Cl}(A)$ and $\partial A$ the interior,
the closure and the boundary of $A$.

The goal of the paper is to generalize the line translation theorem
of B\'{e}guin, Crovisier, Le Roux \cite{B2} in the case where the
property of preserving a finite measure with total support is
replaced by the intersection property. A similar result has done
when $\mathbb{A}$ is a closed annulus due to B\'{e}guin, Crovisier,
Le Roux and Patou \cite{B1}. 
The reason why we consider the intersection property is
that some interesting questions can be reduced to homeomorphisms of
the open annulus which satisfy the intersection property but do not
preserve a finite measure. For example, consider a homeomorphism $f$
of a closed surface $M$ with genus at least one that preserves a
finite measure with total support. Take a lift $F$ to the universal
covering space $\widetilde{M}$ (homeomorphic to the Poincar\'{e}
disk) of $M$ and suppose that $F$ has a fixed point. When we remove
this point, we obtain a map of the open annulus that satisfies the
intersection property but does not preserve a finite measure. We
will strongly use the arguments of Beguin, Crovisier, Le Roux
\cite{B2} but will have to add some crucial lemmas to weaken their
theorem. Let us first recall some results and then state our main
results.

When $f\in\mathrm{Homeo}_*(\mathbb{A})$, we define the rotation
number of a positively recurrent point as follows (see \cite{P2} for
details). We say that a positively recurrent point $z$ has a
\emph{rotation number} $\rho(F;z)\in \mathbb{R}$ for a lift $F$ of
$f$ to the universal covering space $\mathbb{R}^2$ of $\mathbb{A}$,
if for every subsequence $\{f^{n_k}(z)\}_{k\geq 0}$ of
$\{f^n(z)\}_{n\geq 0}$ which converges to $z$, we have
\[\lim_{k\rightarrow+\infty}\frac{p_1\circ
F^{n_k}(\widetilde{z})-p_1(\widetilde{z})}{n_k}=\rho(F;z)\] where
$\widetilde{z}\in \pi^{-1}(z)$ and $p_1$ is the first projection
$p_1(x,y)=x$. In particular, the rotation number $\rho(F;z)$ always
exists and is rational when $z$ is a fixed or periodic point of $f$.
Let $\mathrm{Rec}^+(f)$ be the set of positively recurrent points of
$f$. We denote the set of rotation numbers of positively recurrent
points of $f$ as $\mathrm{Rot}(F)$.

It is well known that a positively recurrent point of $f$ is also a
positively recurrent point of $f^q$ for all $q\in \mathbb{N}$ (we
give a proof in the Appendix, see Lemma \ref{subsec:positively
recurrent}). By the definition of rotation number, we easily get
that $\mathrm{Rot}(F)$ satisfies the following elementary
properties.
\begin{enumerate}\label{prop:ROT}
  \item[1.] $\mathrm{Rot}(T^k\circ F)=\mathrm{Rot}(F)+k$
  for every $k\in \mathbb{Z}$;
  \item[2.] $\mathrm{Rot}(F^q)=q\mathrm{Rot}(F)$ for every $q\in \mathbb{N}$.
\end{enumerate}

We recall that a \emph{Farey interval} is an interval of the form $]
\frac{p}{q},\frac{p'}{q'} [$ with $q,q'\in
\mathbb{N}\setminus\{0\}$, $p,p'\in \mathbb{Z}$ and $qp'-pq'=1$. Our
main result is the following:
\begin{thm}[Generalization of the line translation theorem]\label{thm:the line translation theorem}
Let $f\in\mathrm{Homeo}_*^\wedge(\mathbb{A})$ and $F$ be a lift of
$f$ to $\mathbb{R}^2$. Assume that $\mathrm{Rot}(F)\neq \emptyset$
and its closure is contained in a Farey interval
$]\frac{p}{q},\frac{p'}{q'}[$. Then, there exists an essential line
$\gamma$ in $\mathbb{A}$ such that the lines
$\gamma,f(\gamma),\cdots, f^{q+q'-1}(\gamma)$ are pairwise disjoint.
Moreover, the cyclic order of these lines is the same as the cyclic
order of the $q+q'-1$ first iterates of a vertical line
$\{\theta\}\times\mathbb{R}$ under the rigid rotation with angle
$\rho$, for any $\rho\in]\frac{p}{q},\frac{p'}{q'}[$.
\end{thm}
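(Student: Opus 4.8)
The plan is to follow B\'eguin--Crovisier--Le Roux \cite{B2} as closely as possible, replacing each appeal to the invariant finite measure of full support by an argument that uses only the intersection property.

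\textbf{Reduction to fixed-point-free maps.} Set $g=T^{-p}\circ F^{q}$ and $g'=T^{-p'}\circ F^{q'}$. Using the two elementary properties of $\mathrm{Rot}$ recalled above together with $qp'-pq'=1$, one computes $\mathrm{Rot}(g)=q\,\mathrm{Rot}(F)-p\subset\,]0,\frac{1}{q'}[$ and $\mathrm{Rot}(g')=q'\,\mathrm{Rot}(F)-p'\subset\,]-\frac{1}{q},0[$. A fixed point of $g$ (\resp of $g'$) would be a positively recurrent point with rotation number $0$ for that map, which is excluded; hence $g$ and $g'$ are fixed-point-free homeomorphisms of $\mathbb{R}^{2}$ commuting with $T$. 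Moreover $T=g^{q'}\circ g'^{-q}$ and $F=g^{p'}\circ g'^{-p}$ (the change of $\mathbb{Z}$-basis of $\langle T,F\rangle$ attached to the Farey relation), so $(g,g')$ is simply another basis adapted to the interval $]\frac{p}{q},\frac{p'}{q'}[$. The theorem then reduces to producing a single essential line $\gamma$ of $\mathbb{A}$ which, lifted to $\mathbb{R}^{2}$, is a Brouwer line for $g$ on its positive side and for $g'$ on its negative side.

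\textbf{Construction of the line.} Since $g$ and $g'$ are fixed-point-free, the Brouwer theory of surface homeomorphisms --- brick decompositions in the sense of Sauzet and Le Roux, together with maximal isotopies --- provides in the universal cover locally finite $T$-equivariant families of pairwise disjoint proper lines adapted to $g$ and to $g'$. Choosing a \emph{maximal free} brick decomposition for the combined dynamics, and then applying the operations on essential lines (concatenation along walls, filling of trivial complementary disks, pushing off an exceptional set), one extracts from its $1$-skeleton a single essential line $\gamma$ with the required property. This is exactly the place where \cite{B2} invokes the finite measure: there it guarantees that the essential complementary regions of the decomposition are non-wandering, which is what makes the extracted object an essential line joining the two ends --- rather than a null-homotopic loop or an end-trivial line --- and what rules out the ``vertical translation'' type of degeneracy (which does occur for some fixed-point-free $g$ commuting with $T$, but is incompatible with the intersection property). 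Here this is replaced by the crucial lemmas of the introduction, whose prototype is: an open essential sub-annulus $U\subset\mathbb{A}$ (one containing an essential circle) cannot satisfy $f(U)\cap U=\emptyset$, since otherwise an essential circle in $U$ would be disjoint from its image. With more care the same principle forbids the wandering behaviour that would make the construction collapse, provided the whole argument is arranged so that recurrence is only ever applied to essential sub-annuli; I expect achieving this arrangement to be the main technical obstacle.

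\textbf{Intermediate iterates and cyclic order.} The maximality of the brick decomposition controls the displacement of $\gamma$ under $f$, so that the line produced in the previous step in fact realises the whole combinatorial pattern of the first $q+q'$ iterates and not only the two conditions coming from $g$ and $g'$; the hypothesis that $]\frac{p}{q},\frac{p'}{q'}[$ is a Farey interval is precisely what makes that pattern self-consistent, so that $\gamma,f(\gamma),\dots,f^{q+q'-1}(\gamma)$ are pairwise disjoint. It remains to check that their cyclic order coincides with that of a vertical line $\{\theta\}\times\mathbb{R}$ under the rigid rotation $R_{\rho}\colon(x,y)\mapsto(x+\rho,y)$ for $\rho\in\,]\frac{p}{q},\frac{p'}{q'}[$, the latter order being itself independent of such a $\rho$ by the classical combinatorics of Farey intervals. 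This is where $\mathrm{Rot}(F)\neq\emptyset$ is used: one picks a positively recurrent point $z$ with $\rho(F;z)\in\,]\frac{p}{q},\frac{p'}{q'}[$, follows the itinerary of its orbit through the strips cut out of $\mathbb{A}$ by $\gamma,\dots,f^{q+q'-1}(\gamma)$, and observes that the value $\rho(F;z)$ is compatible with exactly one of the two possible cyclic orders, namely the one of the rigid-rotation model.
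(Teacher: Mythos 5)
Your overall frame is the right one (it is essentially the reduction the paper also makes: with $\Phi=T^{-p}\circ F^{q}$ and $\Psi=T^{p'}\circ F^{-q'}$, it suffices to produce one essential line of $\mathbb{A}$ whose lift is disjoint from its images under both maps, the disjointness and cyclic order of the $q+q'-1$ iterates then being exactly Proposition 4.1 of \cite{B1}, which works verbatim in the open annulus). But the middle step of your proposal --- ``Construction of the line'' --- is precisely the new content of this paper, and you do not supply it; you even flag it yourself as ``the main technical obstacle.'' Saying that a maximal free brick decomposition ``for the combined dynamics'' of $g$ and $g'$ yields, after unspecified operations on its $1$-skeleton, an essential line that is simultaneously a Brouwer line for both maps is not an argument: there is no reason a decomposition free for $g$ is free for $g'$, and nothing in your sketch explains how the intersection property forbids the degenerate outcomes (a free essential \emph{circle} rather than a line, or a line of the wrong type). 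The paper's route is different and concrete: it first proves the key Lemma \ref{lem:bounded condition} (an essential line $\Gamma$ with $T^{-q}(\Gamma)<F(\Gamma)<T^{q}(\Gamma)$ when $\mathrm{Rot}(F)\subset[-q+2,q-2]$), whose proof uses the Guillou--Sauzet theorem \ref{thm:GS} twice (once for $F\circ T^{q-1}$, once in the quotient annulus $\mathbb{R}^{2}/F'$ with $F'=F\circ T^{q}$), a three-case analysis, and the fixed-point Lemma \ref{lem:Franks theorem 2.1} proved via $T$-invariant maximal free brick decompositions plus the intersection property; then lines for the various commuting maps are merged not by a common brick decomposition but by the $\vee$ operation of Lemma \ref{lem:commute and topological line}, after reducing to the case $(p,q)=(0,1)$, $(p',q')=(1,1)$ via Lemma \ref{lem:Fq satisfies the intersection property}. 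None of this machinery is replaced by anything in your text, so the proof has a genuine gap at its core.

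Two smaller points. First, your use of the hypothesis $\mathrm{Rot}(F)\neq\emptyset$ is misplaced: the cyclic order of $\gamma,f(\gamma),\dots,f^{q+q'-1}(\gamma)$ does not need a recurrent orbit at all --- it comes for free from Proposition 4.1 of \cite{B1} once the lift is disjoint from its $\Phi$- and $\Psi$-images; where nonemptiness of the rotation set is really used is inside the construction of the line (in the paper, in cases (2) and (3) of the proof of Lemma \ref{lem:bounded condition}, where a positively recurrent point with prescribed rotation number produces the contradictions). Second, your opening reduction (fixed-point freeness of $g=T^{-p}\circ F^{q}$ and $g'=T^{-p'}\circ F^{q'}$ from the rotation-set computation) is correct, but fixed-point freeness alone is far from enough to run a Brouwer-type construction that respects both maps and $T$ simultaneously; that is exactly the difficulty the paper's lemmas are designed to overcome.
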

It is easy to see that a homeomorphism of $\mathbb{A}$ that
preserves a finite measure with total support satisfies the
intersection property. Note that the statement of Theorem
\ref{thm:the line translation theorem} with this stronger condition
is exactly what is done in \cite{B2}.
\bigskip

Let us consider the simple case of Theorem \ref{thm:the line
translation theorem} when $(p,q)=(0,1)$ and $(p',q')=(1,1)$. We
obtain the following corollary.

\begin{cor}\label{thm:(0,1)}Let $f\in \mathrm{Homeo}_*^\wedge(\mathbb{A})$.
We suppose that $F$ is a lift of $f$ to $\mathbb{R}^2$ and that
$$\emptyset\neq \mathrm{Cl}(\mathrm{Rot}(F))\subset]0,1[.$$ Then there exists
an essential line in $\mathbb{A}$ that is free for $f$, that means
disjoint from its image by $f$.
\end{cor}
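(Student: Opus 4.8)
The plan is to read Corollary~\ref{thm:(0,1)} off Theorem~\ref{thm:the line translation theorem} directly, taking $(p,q)=(0,1)$ and $(p',q')=(1,1)$. First I would check that $]0,1[\,=\,]\frac{p}{q},\frac{p'}{q'}[$ really is a Farey interval in the sense recalled above: one has $q=q'=1\in\mathbb{N}\setminus\{0\}$, $p=0$ and $p'=1$ in $\mathbb{Z}$, and $qp'-pq'=1\cdot1-0\cdot1=1$. The standing hypothesis of the corollary, $\emptyset\neq\mathrm{Cl}(\mathrm{Rot}(F))\subset\,]0,1[$, is then literally the hypothesis ``$\mathrm{Rot}(F)\neq\emptyset$ and $\mathrm{Cl}(\mathrm{Rot}(F))\subset\,]\frac{p}{q},\frac{p'}{q'}[$'' required by the theorem, so the theorem applies verbatim.

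It then remains only to interpret the conclusion. Here $q+q'-1=1$, so the pairwise disjoint lines $\gamma,f(\gamma),\dots,f^{q+q'-1}(\gamma)$ produced by the theorem are just the two lines $\gamma$ and $f(\gamma)$, and pairwise disjointness says precisely that $\gamma\cap f(\gamma)=\emptyset$, \ie $\gamma$ is free for $f$, which is the assertion. The ``moreover'' clause on cyclic orders carries no additional content in this case: for the rigid rotation of angle $\rho\in\,]0,1[$ a vertical line $\{\theta\}\times\mathbb{R}$ and its first iterate $\{\theta+\rho\}\times\mathbb{R}$ are already disjoint, and a set of two disjoint essential lines admits only one cyclic ordering.

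Thus, strictly speaking, there is nothing left to do once Theorem~\ref{thm:the line translation theorem} is available: the corollary is simply the lowest case $q+q'=2$. The genuine difficulty lies entirely upstream, in the theorem itself, and specifically in replacing the hypothesis ``$f$ preserves a finite measure with total support'' of \cite{B2} by the intersection property; this is where the brick-decomposition method and the extra lemmas on essential topological lines announced in the introduction do their work. I would expect that passage from a full-support invariant measure to the weaker intersection property, rather than anything particular to the pair $(0,1)$/$(1,1)$, to be the main obstacle — but at the level of Corollary~\ref{thm:(0,1)} no such obstacle remains.
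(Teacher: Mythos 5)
Your deduction of the corollary from Theorem \ref{thm:the line translation theorem} is formally a valid implication (the interval $]0,1[$ is indeed the Farey interval with $(p,q)=(0,1)$, $(p',q')=(1,1)$, and $q+q'-1=1$ gives exactly the two disjoint lines $\gamma$, $f(\gamma)$), but as a proof of Corollary \ref{thm:(0,1)} it is circular within this paper's logic. The paper does not obtain the corollary as a downstream consequence of the theorem; it is the other way round: the proof of Theorem \ref{thm:the line translation theorem} begins by proving precisely this special case directly, and the general Farey case is then reduced to it (the lift $F^{qq'}\circ T^{-pq'}$ has rotation set with closure in $]0,1[$, and Corollary \ref{thm:(0,1)} is invoked to produce the line $\Gamma'$ with $\Gamma'<(F^{qq'}\circ T^{-pq'})(\Gamma')<T(\Gamma')$ before Lemma \ref{lem:commute and topological line} is applied to $\Phi=T^{-p}\circ F^q$ and $\Psi=T^{p'}\circ F^{-q'}$). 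So ``read it off the theorem'' defers all the content to a statement whose proof requires the corollary, and no independent argument for the corollary is supplied.

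The genuine work you are missing is the direct argument for the $(0,1)$--$(1,1)$ case: since $\mathrm{Cl}(\mathrm{Rot}(F))\subset]0,1[$, one may assume $\mathrm{Rot}(F)\subset[\frac1n,1-\frac1n]$ for some large $n$, so that $\mathrm{Rot}(F^{2n}\circ T^{-n})\subset[-n+2,n-2]$; by Lemma \ref{lem:Fq satisfies the intersection property} the map $f^{2n}$ still has the intersection property, and Lemma \ref{lem:bounded condition} (the key new lemma of the paper, proved via Theorem \ref{thm:GS}, maximal free brick decompositions and Lemma \ref{lem:Franks theorem 2.1}) yields an essential line $\Gamma$ with $T^{-n}(\Gamma)<(F^{2n}\circ T^{-n})(\Gamma)<T^{n}(\Gamma)$, i.e.\ $\Gamma<F^{2n}(\Gamma)<T^{2n}(\Gamma)$. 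Then Lemma \ref{lem:commute and topological line}, applied with $H_1=F$, $H_2=F^{-1}\circ T$ and $q_1=q_2=2n$, produces $\Gamma'$ with $\Gamma'<F(\Gamma')<T(\Gamma')$, and $\pi(\Gamma')$ is the desired free essential line. Your proposal correctly identifies that the difficulty lies in replacing the invariant measure by the intersection property, but that difficulty sits exactly here, in the proof of this corollary, not only ``upstream'' of it.
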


A similar result in the case when $\mathbb{A}$ is a closed annulus
is known due to Bonatti and Guillou \cite{G}: if $f$ is a
homeomorphism of the closed annulus
$\mathbb{R}/\mathbb{Z}\times[0,1]$ isotopic to the identity and
fixed point free, then either there is a free simple path for $f$
that joins the two boundary of the closed annulus or there is a free
essential circle in the closed annulus for $f$.

As we will recall later, the rotation number set of $F$ is a closed
interval if $f$ satisfies the intersection property and $F$ is any
lift of $f$ to the universal covering space $\mathbb{R}\times[0,1]$.
Therefore, if the map $f$ has no fixed point,  we can find a lift
$F$ of $f$ to $\mathbb{R}/\mathbb{Z}\times[0,1]$ such that the
rotation number set of $F$ is contained in $]0,1[$.
\smallskip

As an immediate corollary of Theorem \ref{thm:the line translation
theorem}, we get the following generalization of the Corollary 0.3
in \cite{B2}:
\begin{cor}
We suppose that $f\in \mathrm{Homeo}_*^\wedge(\mathbb{A})$ has a
rotation set reduced to a single irrational number $\rho$ (for any
given lift $F$ of $f$). Then, for every
$n\in\mathbb{N}\setminus\{0\}$, there exists an essential line
$\gamma$ in $\mathbb{A}$, such that the lines $\gamma,
f(\gamma),\cdots, f^n(\gamma)$ are pairwise disjoint. The cyclic
order of the lines $\gamma,f(\gamma),\cdots, f^n(\gamma)$ is the
same as the cyclic order of the $n$ first iterates of a vertical
line under the rigid rotation of angle $\rho$.
\end{cor}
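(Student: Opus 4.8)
The plan is to deduce this corollary from Theorem~\ref{thm:the line translation theorem} by choosing, for each fixed $n$, an appropriate Farey interval containing the single rotation number $\rho$. First I would observe that since $\rho$ is irrational and $\mathrm{Rot}(F)=\{\rho\}$, its closure is the singleton $\{\rho\}$; so the only thing to arrange is a Farey interval $]\frac{p}{q},\frac{p'}{q'}[$ containing $\rho$ with $q+q'-1\geq n$. This is a statement purely about continued fractions / the Stern--Brocot (Farey) tree: the irrational $\rho$ lies in a nested sequence of Farey intervals $]\frac{p_k}{q_k},\frac{p'_k}{q'_k}[$ obtained by successive mediant subdivision, and the denominators $q_k,q'_k$ tend to infinity (indeed $\max(q_k,q'_k)\to\infty$, and since $\rho$ is irrational it is never equal to an endpoint, so it stays in the open interval at every stage). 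Hence for $k$ large enough we get $q_k+q'_k-1\geq n$. Fix such a Farey interval.

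Next I would apply Theorem~\ref{thm:the line translation theorem} to $f$ and $F$ with this Farey interval $]\frac{p}{q},\frac{p'}{q'}[$ (writing $q,q'$ for $q_k,q'_k$). The hypotheses are met: $f\in\mathrm{Homeo}_*^\wedge(\mathbb{A})$, $\mathrm{Rot}(F)=\{\rho\}\neq\emptyset$, and its closure $\{\rho\}$ is contained in the open Farey interval. The theorem then produces an essential line $\gamma$ such that $\gamma,f(\gamma),\dots,f^{q+q'-1}(\gamma)$ are pairwise disjoint, and since $q+q'-1\geq n$ the sublist $\gamma,f(\gamma),\dots,f^n(\gamma)$ is in particular pairwise disjoint. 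This gives the first assertion.

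For the statement about cyclic order, I would use the ``moreover'' clause of Theorem~\ref{thm:the line translation theorem}: the cyclic order of $\gamma,f(\gamma),\dots,f^{q+q'-1}(\gamma)$ agrees with that of the first $q+q'-1$ iterates of a vertical line under the rigid rotation $R_{\rho'}$, for \emph{any} $\rho'\in]\frac{p}{q},\frac{p'}{q'}[$; in particular one may take $\rho'=\rho$. Since cyclic order is inherited by sublists (restricting a cyclic order on a set to a subset gives the cyclic order of that subset), the cyclic order of $\gamma,f(\gamma),\dots,f^n(\gamma)$ coincides with that of the first $n$ iterates of a vertical line under $R_\rho$. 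I expect the only real point requiring care — and the ``main obstacle,'' though it is a mild one — to be the elementary number-theoretic verification that every irrational $\rho$ is contained in Farey intervals with arbitrarily large $q+q'$; this follows by induction on the Stern--Brocot subdivision, noting that if $]\frac{p}{q},\frac{p'}{q'}[$ contains $\rho$ then its mediant $\frac{p+p'}{q+q'}$ is either $<\rho$ or $>\rho$ (never equal, as $\rho\notin\mathbb{Q}$), so $\rho$ lies in one of the two halves, whose relevant denominator sum is $q+q'+\min(q,q')>q+q'$. Everything else is a direct citation of Theorem~\ref{thm:the line translation theorem} together with the trivial observation that disjointness and cyclic order pass to sublists.
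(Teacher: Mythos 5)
Your proof is correct and follows exactly the route the paper intends: the paper states this corollary as an immediate consequence of Theorem~\ref{thm:the line translation theorem}, and your argument simply supplies the routine detail that an irrational $\rho$ lies in Farey intervals $]\frac{p}{q},\frac{p'}{q'}[$ with $q+q'$ arbitrarily large (via mediant subdivision or continued-fraction convergents), after which disjointness and cyclic order pass to the sublist $\gamma,f(\gamma),\dots,f^n(\gamma)$ with $\rho'=\rho$. Nothing further is needed.
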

Theorem \ref{thm:the line translation theorem} above is a global
result on the open annulus. A local version was studied by
 Patou in her thesis \cite{P} under similar hypotheses of the line translation theorem in
\cite{B2}. More precisely, she considered a local homeomorphism $F$
between two neighborhoods of the origin $O$ in $\mathbb{R}^2$ that
fixes $O$. If $F$ preserves the orientation and the area of
$\mathbb{R}^2$, and  it has no other periodic point except $O$ in
its domain, then for every $N\in \mathbb{N}$, there exists a simple
arc $\Gamma$ in a small neighborhood of $O$ that issues from $O$
such that the arcs $\Gamma,F(\Gamma),\cdots,F^N(\Gamma)$ are
pairwise disjoint except at $O$.
\smallskip

We will introduce some mathematical objects and recall some
well-known facts in Section 2. In Section 3, we first state some
crucial lemmas without proof and then prove Theorem \ref{thm:the
line translation theorem}. In Section 4, we prove the lemmas stated
in Section 3, and we give some remarks about the relations between
the positively recurrent set and the rotation number set. In Section
5, we define a weak rotation number which is a generalization of the
rotation number we have defined above. We prove the generalization
of the line translation theorem in the weak sense. Finally, in
Section 6, we provide a proof of a well known fact that we require
in this paper but were unable to find  in the literature.
\smallskip

\noindent\textbf{Acknowledgements.} I would like to thank Patrice Le
Calvez for many helps for this paper.

\section{Preliminaries}
\subsection{Essential topological lines in
$\mathbb{R}^2$} A \emph{topological line} in $\mathbb{R}^2$ is the
image of a proper continuous embedding of $\mathbb{R}$.
Equivalently, using Schoenflies theorem, it is the image of an
Euclidean line under a homeomorphism of $\mathbb{R}^2$. Let $\Gamma$
be a topological line whose orientation is induced by a
parametrization. We denote by $L(\Gamma)$ the connected component of
$\mathbb{R}^2\setminus\Gamma$ on the left of $\Gamma$, and by
$R(\Gamma)$ the connected component of $\mathbb{R}^2\setminus\Gamma$
on the right of $\Gamma$. We get a partial order relation $\leq$ on
the set of oriented lines by writing: $\Gamma_1\leq \Gamma_2$ if
$L(\Gamma_1)\subset L(\Gamma_2)$ (or equivalently
$R(\Gamma_2)\subset R(\Gamma_1)$). We get also a transitive relation
$<$ by writing $\Gamma_1<\Gamma_2$ if
$\mathrm{Cl}(L(\Gamma_1))\subset L(\Gamma_2)$ (or equivalently
$\mathrm{Cl}(R(\Gamma_2))\subset R(\Gamma_1)$).

We call \emph{essential line} in $\mathbb{R}^2$ an oriented line
$\Gamma$ such that
$\lim\limits_{t\rightarrow+\infty}p_2(\Gamma(t))=+\infty$ and
$\lim\limits_{t\rightarrow-\infty}p_2(\Gamma(t))=-\infty$ where
$p_2(x,y)=y$ and $t\mapsto\Gamma(t)$ is a parametrization. If
$\Gamma$ is a line and $F$ is a homeomorphism of $\mathbb{R}^2$,
then $F(\Gamma)$ is a line. Moreover, if $\Gamma$ is oriented, then
there is a natural orientation for $F(\Gamma)$. We say that an
orientation preserving homeomorphism $F$ of $\mathbb{R}^2$ is
\emph{essential} if for every essential line $\Gamma$ in
$\mathbb{R}^2$, $F(\Gamma)$ is also an essential line in
$\mathbb{R}^2$. Any homeomorphism $F$ that lifts a homeomorphism $f$
of an open annulus $\mathbb{A}$ isotopic to the identity is an
essential homeomorphism.

Let $\Gamma_1$ and $\Gamma_2$ be two essential lines in
$\mathbb{R}^2$, and let $U$ be the unique connected component of the
set $L(\Gamma_1)\cap L(\Gamma_2)$ which contains half lines of the
form $]-\infty,a[\times\{b\}$ for some numbers $a$ and $b$. Then the
boundary of $U$ is an essential line in $\mathbb{R}^2$, denoted by
$\Gamma_1\vee\Gamma_2$ (the proof of this fact uses a classical
result by B. Ker\'{e}kj\'{a}rt\'{o} \cite{BK}. See \cite{B1} and
\cite{B2}).

\begin{rem}\label{rem:v}Let $\Gamma_1,\Gamma_2,\Gamma_3$ be three essential
lines in $\mathbb{R}^2$. The following properties are immediate
consequences of the definition of the line $\Gamma_1\vee\Gamma_2$.
\begin{enumerate}
                          \item The line $\Gamma_1\vee\Gamma_2$ is
                          included in the union of the lines
                          $\Gamma_1$ and $\Gamma_2$. Hence, if
                          $\Gamma_3<\Gamma_1$ and
                          $\Gamma_3<\Gamma_2$, then
                          $\Gamma_3<\Gamma_1\vee\Gamma_2$.
                          \item The sets $R(\Gamma_1)$ and
                          $R(\Gamma_2)$ are included in the set
                          $R(\Gamma_1\vee\Gamma_2)$. In other words,
                          we have $\Gamma_1\vee\Gamma_2\leq\Gamma_1$
                          and $\Gamma_1\vee\Gamma_2\leq\Gamma_2$.
                        \end{enumerate}
\end{rem}

\subsection{Brouwer theory}\label{subsec:Brouwer theory}A
\emph{Brouwer homeomorphism} is an orientation preserving fixed
point free homeomorphism of $\mathbb{R}^2$. Given a Brouwer
homeomorphism $F$, a \emph{Brouwer line} for $F$ is a topological
line $\Gamma$, disjoint from $F(\Gamma)$ and separating $F(\Gamma)$
from $F^{-1}(\Gamma)$. The Brouwer Plane Translation Theorem asserts
that every point belongs to a Brouwer line.

Now we introduce the following ``equivariant'' version of the
Brouwer Plane Translation Theorem which has been proved by Guillou
and Sauzet:

\begin{thm}[\cite{GL},\cite{A}]\label{thm:GS}Let $f\in\mathrm{Homeo}_*(\mathbb{A})$
 and $F$ be a lift of
$f$ to $\mathbb{R}^2$. If $F$ is fixed point free, then there exists
an essential circle in $\mathbb{A}$ that is free under $f$ (and
therefore that lifts to a Brouwer line of $F$) or there exists an
essential line in $\mathbb{A}$ that lifts to a Brouwer line of $F$.
\end{thm}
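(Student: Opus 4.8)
The plan is to reduce the statement to the Brouwer Plane Translation Theorem together with a careful analysis of how the covering translation $T$ acts on a Brouwer line. Since $F$ is a lift of $f\in\mathrm{Homeo}_*(\mathbb{A})$, it is an essential orientation-preserving homeomorphism of $\mathbb{R}^2$ commuting with $T$, and by hypothesis it is fixed point free, hence a Brouwer homeomorphism. First I would invoke the Brouwer Plane Translation Theorem to produce a Brouwer line $\Gamma_0$ for $F$; the goal is to replace $\Gamma_0$ by a line that is either $T$-invariant as a set (so it descends to an essential line in $\mathbb{A}$) or, after projecting, becomes an essential circle that is free under $f$. The dichotomy in the conclusion corresponds exactly to whether the translation $T$ maps the chosen Brouwer line to itself or "off itself" in a coherent direction.

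The key steps are as follows. First, using the orbit space / reduction theory of Brouwer homeomorphisms (Guillou--Le Roux, Sauzet), one can pass from a single Brouwer line to a $T$-equivariant family: the essential homeomorphism $F$ acts on $\mathbb{R}^2$ without fixed points, and commutes with the free properly discontinuous $\mathbb{Z}$-action generated by $T$. One constructs a maximal "brick decomposition" of the plane adapted to $F$, and shows it can be chosen $T$-invariant. Second, I would analyze the combinatorics of the induced dynamics on the brick decomposition: the absence of fixed points forces the existence of an attractor-free or repeller-free region, and the $\mathbb{Z}$-equivariance means the relevant invariant "free" set either fibers over $\mathbb{A}$ as a line (when it is $T$-stable) or wraps around (when $T$ translates it). Concretely, one finds a connected, closed, $F$-free set $C$ in $\mathbb{R}^2$ whose image in $\mathbb{A}$ is either an essential circle free under $f$, or which is $T$-invariant and separates $\mathbb{R}^2$ into two $F$- and $T$-related halves, i.e.\ descends to an essential line lifting to a Brouwer line. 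Third, one checks that in the line case the descended curve $\gamma$ indeed satisfies $\gamma\cap f(\gamma)=\emptyset$ with $f(\gamma)$ separating $f^{-1}(\gamma)$ from $\gamma$ upstairs — this is automatic from the Brouwer line property of the lift and the fact that $T$ and $F$ commute.

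The main obstacle is the equivariance itself: the classical Brouwer Plane Translation Theorem produces \emph{some} Brouwer line with no control over its interaction with $T$, and a generic such line will have $T(\Gamma)$ in an uncontrolled position relative to $\Gamma$. Forcing a compatible choice requires genuinely equivariant machinery — either the equivariant brick decompositions of Sauzet and Guillou--Le Roux, or an argument running the Brouwer foliation construction $T$-equivariantly on the quotient annulus while tracking the failure of the foliation to close up. I expect the bulk of the work to be in showing that the maximal free brick decomposition (or the associated singular foliation on $\mathbb{A}$) can be taken invariant, and in the case analysis distinguishing a genuine essential line from a closed leaf becoming an essential circle; the descent statements and the Brouwer-line separation properties are then formal consequences. (Since this theorem is attributed to the cited works \cite{GL},\cite{A}, in the paper itself only a reference is given; the above is the shape of the argument behind those references.)
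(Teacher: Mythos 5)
The paper does not prove Theorem \ref{thm:GS} at all: it is imported as a known result of Guillou and Sauzet (\cite{GL}, \cite{A}), and only the surrounding machinery (maximal free, $T$-invariant brick decompositions, the attractor/repeller sets $B_{\succeq}$, $B_{\preceq}$ whose boundary components are Brouwer lines) is recalled in Section 2.3 and used later. So there is no internal proof to compare yours with. Your outline is consistent with the strategy of the cited works and with that background: the substance does lie in producing a $T$-invariant free brick decomposition and extracting from the equivariant attractor/repeller structure either a curve that projects to a free essential circle or one that projects to an essential line whose lift is a Brouwer line. But as written your text is a plan, not a proof: the two decisive steps --- that the decomposition can be made $T$-invariant while keeping the needed structure (note the paper's own caveat that a maximal $T$-invariant free decomposition need not be maximal among all free decompositions, so adjacency comparability can fail), and the dichotomy analysis separating the circle case from the line case --- are stated as expectations rather than carried out.

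There is also one concrete overclaim in your third step. In the line case the theorem asserts only that the essential line $\gamma$ in $\mathbb{A}$ \emph{lifts} to a Brouwer line $\Gamma$ of $F$; it does not assert, and it is not automatic, that $\gamma$ is free under $f$. Disjointness of $\Gamma$ from $F(\Gamma)$ does not prevent $\Gamma$ from meeting $T^{k}(F(\Gamma))$ for some $k\neq 0$, which is exactly when $\gamma\cap f(\gamma)\neq\emptyset$; freeness downstairs is the stronger property that the rest of the paper (e.g.\ Lemma \ref{lem:bounded condition} and the proof of Theorem \ref{thm:the line translation theorem}) has to work hard to obtain under the additional intersection-property and rotation-set hypotheses. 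So that check is not ``automatic from the Brouwer line property of the lift,'' and claiming it would prove more than the theorem states.
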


Let $\Gamma$ be a Brouwer line. It can be oriented such that
$\Gamma<F(\Gamma)$. Since $F$ preserves the orientation, we have
$F^k(\Gamma)<F^{k+1}(\Gamma)$. By induction, we see that
$F^p(\Gamma)<F^q(\Gamma)$ if and only if $p<q$. In particular, the
lines $(F^k(\Gamma))_{k\in \mathbb{Z}}$ are pairwise disjoint.

Now let $U$ be the open region of $\mathbb{R}^2$ situated between
the lines $\Gamma$ and $F(\Gamma)$, and $\mathrm{Cl}(U)=\Gamma\cup
U\cup F(\Gamma)$. The sets $(F^k(U))_{k\in \mathbb{Z}}$ are pairwise
disjoint. As a consequence, the restriction of $F$ to the open set
$O_U=\bigcup_{k\in \mathbb{Z}} F^k(\mathrm{Cl}(U))$ is conjugate to
a translation. In particular, if the iterates of $\mathrm{Cl}(U)$
cover the whole plane, then $F$ itself is conjugate to a
translation.

\subsection{Franks' Lemma and brick decompositions}
A \emph{free disk chain} for a homeomorphism $F$ of $\mathbb{R}^2$
is a finite set $b_i\,\,(i=1,2,\cdots,n)$ of embedded open disks in
$\mathbb{R}^2$ satisfying
\begin{enumerate}
                            \item $F(b_i)\cap
                            b_i=\emptyset$ for $1\leq i\leq
                            n$;
                            \item if $i\neq j$ then either $b_i=b_j$
                            or $b_i\cap b_j=\emptyset$;
                            \item for $1\leq i\leq
                            n$, there exists $m_i>0$ such that $F^{m_i}(b_i)\cap
                            b_{i+1}\neq\emptyset$.
                          \end{enumerate}
 We say that $(b_i)_{i=1}^n$ is a \emph{periodic free
disk chain} if $b_1=b_n$.
\smallskip

In \cite{F}, Franks got the following useful lemma about the
existence of fixed points of an orientation preserving homeomorphism
$F$ of $\mathbb{R}^2$ from Brouwer theory.

\begin{prop}[Franks' Lemma]\label{prop:Franks' Lemma}
Let $F: \mathbb{R}^2\rightarrow \mathbb{R}^2$ be an orientation
preserving homeomorphism which possesses a periodic free disk chain.
Then $F$ has at least one fixed point.
\end{prop}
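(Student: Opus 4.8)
The plan is to argue by contradiction, reducing everything to Brouwer theory. Assume $F$ has no fixed point; then $F$ is an orientation-preserving fixed-point-free homeomorphism of $\mathbb{R}^2$, i.e. a Brouwer homeomorphism, and I will show that such an $F$ cannot possess a periodic free disk chain. Write the chain as $(b_i)_{i=1}^n$ with $b_1=b_n$, read the indices cyclically, and choose for each $i$ a point $p_i\in b_i$ together with an integer $m_i\geq 1$ such that $F^{m_i}(p_i)\in b_{i+1}$; such data exist by property (3) in the definition of a free disk chain. Inside each (free) disk $b_{i+1}$ I would then join the incoming point $F^{m_i}(p_i)$ to the outgoing point $p_{i+1}$ by a simple arc $\alpha_{i+1}\subset b_{i+1}$. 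Because $F(b_{i+1})\cap b_{i+1}=\emptyset$, the arc $\alpha_{i+1}$ is disjoint from $F(\alpha_{i+1})$, so each $\alpha_{i+1}$ plays the role of a translation arc for $F$.

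Concatenating, for each $i$, the finite orbit segment $p_i,F(p_i),\dots,F^{m_i}(p_i)$ with the connecting arc $\alpha_{i+1}$ produces a closed loop $\Gamma$ in $\mathbb{R}^2$: morally, $\Gamma$ is a ``periodic translation-arc loop'', the orbit of $p_1$ under the piecewise description above. The contradiction will come from \textbf{Brouwer's translation-arc lemma}: for a Brouwer homeomorphism, a simple arc $\gamma$ from $z$ to $F(z)$ with $\gamma$ meeting $F(\gamma)$ only at the common endpoint satisfies $\gamma\cap F^k(\gamma)=\emptyset$ for all $k\geq 2$, so the forward concatenation $\bigcup_{k\geq 0}F^k(\gamma)$ is a properly embedded half-line running to infinity. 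Applying this along $\Gamma$ forces the successive arcs $F^k(\alpha_i)$ to march off to infinity and prevents the loop from closing up, contradicting the periodicity $b_1=b_n$ built into $\Gamma$. Since assuming that $F$ has no fixed point leads to a contradiction, $F$ must have a fixed point.

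The main obstacle is that the concatenated loop $\Gamma$ need not be simple: the orbit segments $F^j(p_i)$ and the various connecting arcs $\alpha_i$ can cross one another, so the translation-arc lemma does not apply to $\Gamma$ verbatim. The remedy --- which is the technical heart of the argument --- is a minimality/reduction step: among all periodic free disk chains for $F$, choose one minimizing the total length $\sum_i m_i$ (for a single disk, $n=1$, one necessarily has the minimal return time $\geq 2$, since $F^1(b)\cap b=\emptyset$). Any self-intersection of the corresponding loop $\Gamma$ can be used to excise a strictly shorter periodic free disk chain; hence for the minimal chain $\Gamma$ is simple up to the unavoidable endpoint identifications, and then Brouwer's translation-arc lemma (equivalently, the Brouwer Plane Translation Theorem applied to a Brouwer line through a point of $b_1$, which all forward iterates must cross in a single direction) delivers the contradiction. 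Carrying out this excision cleanly, while keeping track of the cyclic structure of the chain, is the step that requires the most care.
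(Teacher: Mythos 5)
This proposition is not proved in the paper at all: it is quoted from Franks \cite{F}, so there is no internal argument for you to match, and your text has to stand on its own. As it stands it has genuine gaps. First, the object $\Gamma$ to which you want to apply Brouwer theory is not actually a closed curve: the ``orbit segment'' $p_i,F(p_i),\dots,F^{m_i}(p_i)$ is a finite set of points, not an arc, so concatenating it with the arcs $\alpha_{i+1}\subset b_{i+1}$ does not produce a loop; to get a curve you would have to join consecutive orbit points by arcs, and nothing guarantees such arcs can be chosen free, disjoint from the disks of the chain, or disjoint from one another. Second, the arcs $\alpha_{i+1}$ are not translation arcs: they join $F^{m_i}(p_i)$ to $p_{i+1}$, two points not related by $F$, so the translation-arc lemma you invoke (about an arc from $z$ to $F(z)$) does not apply to them, and the assertion that it ``forces the successive arcs to march off to infinity and prevents the loop from closing up'' is a hope, not an argument. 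Third, the reduction to a simple loop via a chain minimizing $\sum_i m_i$ and ``excision'' of self-intersections is exactly the technical heart, and you leave it unproved; in particular intermediate iterates $F^j(p_i)$, $0<j<m_i$, may enter other disks of the chain, and it is not explained why a crossing of the loop with itself always yields a strictly shorter \emph{periodic free disk chain}.

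The standard route, which is Franks' own argument in \cite{F} and what the paper implicitly relies on, avoids all of this. Assume $F$ has no fixed point and let $h$ be a homeomorphism supported in the union of the (pairwise equal or disjoint, free) disks $b_i$ which moves each incoming point $F^{m_i}(p_i)$ to the outgoing point $p_{i+1}$ inside $b_{i+1}$. The map $G=h\circ F$ is orientation preserving and still fixed point free: if $G(x)=x$ and $F(x)$ lies in no disk then $F(x)=x$, a contradiction; if $F(x)\in b_j$ then $x=h(F(x))\in b_j$ as well, so $b_j$ meets $F(b_j)$, contradicting freeness. On the other hand the chain condition (with the usual care about intermediate visits of the orbit to the disks) forces $G$ to have a periodic point, which contradicts the fact, coming from the Brouwer Plane Translation Theorem, that a Brouwer homeomorphism has no periodic points. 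If you want a complete proof you should either carry out this modification argument or fill in your excision step in full detail; as written, your text is a plan rather than a proof.
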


A \emph{brick decomposition} of a surface $S$ (not necessarily
closed) is given by a one dimensional stratified set $\Sigma$ (the
\emph{skeleton} of the decomposition) with a zero dimensional
submanifold $V$ such that any vertex $v\in V$ is local the extremity
of exactly three edges. A brick is the closure of a connected
component of $S\setminus \Sigma$. Write $\mathfrak{B}$ for the set
of bricks. For any $\mathfrak{X}\subset \mathfrak{B}$ the union of
bricks which are in $\mathfrak{X}$ is a sub-surface of $S$ with
boundary. Suppose that $F$ is a homeomorphism of $S$. We can define
the relation $B\mathscr{R}B'\Leftrightarrow F(B)\cap
B'\neq\emptyset$ where $B,B'\in \mathfrak{B}$, and write $B\preceq
B'$ if there exists a sequence $(B_i)_{0\leq i\leq n}\subset
\mathfrak{B}$ such that $B_0=B$, $B_n=B'$ and
$B_i\mathscr{R}B_{i+1}$ for every $i\in\{0,\ldots,n-1\}$. The union
$B_{\succeq}=\bigcup_{B'\succeq B}B'$ (resp.
$B_{\preceq}=\bigcup_{B'\preceq B}B'$) is a closed subset satisfying
$F(B_{\succeq})\subset \mathrm{Int}(B_{\succeq})$ (resp.
$F^{-1}(B_{\preceq})\subset \mathrm{Int}(B_{\preceq})$\,).

Suppose that $S=\mathbb{R}^2$ and $F$ is an orientation preserving
homeomorphism of $\mathbb{R}^2$ without fixed point, we say that
$\mathfrak{B}$ is \emph{free} if every brick $B\in\mathfrak{B}$ is
free. The stronger version of Franks' Lemma given in Guillou and Le
Roux \cite{Ler} asserts that there is no closed chain of bricks of
$\mathfrak{B}$ if $\mathfrak{B}$ is free. This implies that
$\preceq$ is a partial order. Furthermore, we can construct a
\emph{maximal free decomposition}: it is a brick decomposition with
free bricks such that the union of two adjacent bricks is no more
free \cite{A}. The decomposition being maximal, two adjacent bricks
are comparable. In fact, it appears that for every brick $B$, the
union of bricks $B'\succeq B$ adjacent to $B$ is non-empty, as is
the union of adjacent bricks $B'\preceq B$. This implies that
$B_{\succeq}$ and $B_{\preceq}$ are connected surfaces with
boundary. The fact that we are working with bricks implies that
$\partial(B_{\succeq})$ and $\partial(B_{\preceq})$ is a one
dimensional manifold; the inclusion $F(B_{\succeq})\subset
\mathrm{Int}(B_{\succeq})$ implies that every component of
$\partial(B_{\succeq})$ and $\partial(B_{\preceq})$ is a Brouwer
line. If $F$ commutes with $T$, we can further assume that the free
decomposition $\mathfrak{B}$ is invariant by $T$ (see \cite{P1} for
details). The relation $\preceq$ is $T$-equivalent: $B\preceq B'
\Leftrightarrow T(B)\preceq T(B')$. However a maximal $T$-invariant
free decomposition is not necessarily maximal among the free
decomposition. For example, there is no $T$-invariant free
decomposition for $T^k$ ($k\geq3$) which is maximal among the free
decompositions (see \cite{A} for details). Thus in this case, we can
not assert that two adjacent bricks are comparable and that
$B_{\succeq}$ and $B_{\preceq}$ are connected. But if there exists
$p\in \mathbb{Z}\setminus\{0\}$ such that $T^p(B)\in B_\succeq$
(resp. $T^p(B)\in B_\preceq$), then the set $\bigcup_{k\in
\mathbb{Z}}T^k(B_\succeq)$ (resp. $\bigcup_{k\in
\mathbb{Z}}T^k(B_\preceq)$) is closed connected surface with
boundary, which is a direct consequence of Proposition 2.7 in
\cite{P1}. Moreover, the image par $F$ (resp. $F^{-1}$) of
$\bigcup_{k\in \mathbb{Z}}T^k(B_\succeq)$ (resp. $\bigcup_{k\in
\mathbb{Z}}T^k(B_\preceq)$) is contained in its interior.
\smallskip

\section{Proof of the generalization of the line translation theorem}
In this section, we first state some crucial lemmas and then prove
the main result Theorem \ref{thm:the line translation theorem}. We
delay the proofs of these lemmas in the next section.
\smallskip
\begin{lem}\label{lem:commute and topological line}
Let $H_1,H_2,...,H_p$ be pairwise commuting essential homeomorphisms
of $\mathbb{R}^2$ and $(q_i)_{1\leq i\leq q}$ be a family of
positive integers. If there is an essential line $\Gamma$ in
$\mathbb{R}^2$ such that $\Gamma<H_i^{q_i}(\Gamma)$ for every
$i\in\{1,\cdots,\,p\}$, then there is an essential line $\Gamma'$ in
$\mathbb{R}^2$ such that $\Gamma'<H_i(\Gamma')$ for every
$i\in\{1,\cdots,p\}$.
\end{lem}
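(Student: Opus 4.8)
The plan is to reduce to the single-homeomorphism case by iterating a "take the $\vee$ of shifted copies" construction. Start with the line $\Gamma$ satisfying $\Gamma < H_i^{q_i}(\Gamma)$ for all $i$, and let $Q = q_1 q_2 \cdots q_p$ (or, more efficiently, work one coordinate at a time). For a fixed index, say $i=1$: I want to replace $\Gamma$ by a new essential line $\Gamma_1$ with $\Gamma_1 < H_1(\Gamma_1)$ while not destroying the inequalities for the other $H_j$. The natural candidate is
\[
\Gamma_1 \;=\; \Gamma \,\vee\, H_1(\Gamma) \,\vee\, H_1^{2}(\Gamma) \,\vee\, \cdots \,\vee\, H_1^{\,q_1-1}(\Gamma),
\]
using the operation from Section 2.1. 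By Remark \ref{rem:v}(2), $\Gamma_1 \leq H_1^{k}(\Gamma)$ for each $0 \leq k \leq q_1-1$; applying $H_1$ and using that $H_1$ is essential and order-preserving on essential lines, $H_1(\Gamma_1) = H_1(\Gamma)\vee\cdots\vee H_1^{q_1}(\Gamma)$, and since $\Gamma < H_1^{q_1}(\Gamma)$ one checks $\Gamma_1 < H_1(\Gamma_1)$: indeed $\Gamma_1 \leq \Gamma \vee (\text{something} \leq H_1(\Gamma_1))$ and the strictness comes from the $\Gamma < H_1^{q_1}(\Gamma)$ end. The point is that $\Gamma_1 \leq \Gamma$, so $\mathrm{Cl}(R(\Gamma)) \subset R(\Gamma_1)$ (being a bit careful whether we get $\leq$ or $<$ — here we only need $\leq$ to continue, and recover $<$ at the end from the strict hypotheses).

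Next I need to know that $\Gamma_1$ still satisfies $\Gamma_1 < H_j^{q_j}(\Gamma_1)$ for $j \neq 1$. This uses that all the $H_i$ commute: $H_j^{q_j}(\Gamma_1) = H_j^{q_j}(\Gamma) \vee H_1 H_j^{q_j}(\Gamma) \vee \cdots \vee H_1^{q_1-1}H_j^{q_j}(\Gamma)$ because $H_1$ and $H_j$ commute, so $H_1^k H_j^{q_j} = H_j^{q_j} H_1^k$. Since $\Gamma < H_j^{q_j}(\Gamma)$, applying the essential homeomorphism $H_1^k$ gives $H_1^k(\Gamma) < H_1^k H_j^{q_j}(\Gamma) = H_j^{q_j} H_1^k(\Gamma)$. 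Taking $\vee$ over $k$ and using Remark \ref{rem:v}(1) (the $\vee$ of lines each lying strictly below a common line lies strictly below it — more precisely one compares the $\vee$ of the left-hand sides with the $\vee$ of the right-hand sides and invokes that $\Gamma_i' < \Delta_i$ for all $i$ forces $\bigvee \Gamma_i' < \bigvee \Delta_i$, which follows from part (1) of the Remark applied iteratively together with part (2)), we get $\Gamma_1 < H_j^{q_j}(\Gamma_1)$. So $\Gamma_1$ has the same property as $\Gamma$ for the indices $2,\dots,p$, plus now $\Gamma_1 < H_1(\Gamma_1)$.

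Now iterate: apply the same construction to $\Gamma_1$ with $H_2$ to produce $\Gamma_2 = \Gamma_1 \vee H_2(\Gamma_1) \vee \cdots \vee H_2^{q_2-1}(\Gamma_1)$, giving $\Gamma_2 < H_2(\Gamma_2)$ and $\Gamma_2 < H_j^{q_j}(\Gamma_2)$ for $j \geq 3$ as before; one must also check the property $\Gamma_1 < H_1(\Gamma_1)$ is inherited by $\Gamma_2$, which again works because $H_2$ commutes with $H_1$: the same computation with $q_1$ replaced by $1$ and $H_j^{q_j}$ replaced by $H_1$. After $p$ steps we reach $\Gamma' := \Gamma_p$ with $\Gamma' < H_i(\Gamma')$ for every $i \in \{1,\dots,p\}$, which is the conclusion.

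The main obstacle I anticipate is bookkeeping the distinction between the relations $\leq$ and $<$ through the $\vee$-operation: Remark \ref{rem:v} gives $\Gamma_1 \vee \Gamma_2 \leq \Gamma_i$ directly, but propagating strict inequalities $<$ under $\vee$ and under the (only continuous, not uniformly controlled) homeomorphisms $H_i$ requires a small separate lemma of the shape "if $\Gamma_i' < \Delta$ for all $i$ and all the $\Gamma_i', \Delta$ are essential, then $\bigvee_i \Gamma_i' < \Delta$", and dually "if $\Gamma' < \Delta_i$ for all $i$ then $\Gamma' < \bigvee_i \Delta_i$" — the second is exactly Remark \ref{rem:v}(1). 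Establishing the first (or arranging the argument to avoid needing it, e.g. by only tracking $\leq$ on the intermediate lines and extracting $<$ at the very end from $\Gamma < H_i^{q_i}(\Gamma)$ via $\Gamma' \leq \Gamma \leq H_i^{q_i - 1} \cdots$ sandwiched against $H_i(\Gamma') \geq \cdots$) is the delicate point; everything else is a mechanical consequence of commutativity and the functoriality of $L(\cdot), R(\cdot)$ under essential homeomorphisms.
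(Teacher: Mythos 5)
There is a genuine gap, and it sits exactly at the point you flag as ``the delicate point'': the strict inequality $\Gamma_1<H_1(\Gamma_1)$ for your candidate $\Gamma_1=\Gamma\vee H_1(\Gamma)\vee\cdots\vee H_1^{q_1-1}(\Gamma)$ is not justified, and in general it is false. The hypothesis $\Gamma<H_1^{q_1}(\Gamma)$ gives no relation whatsoever between consecutive iterates $H_1^{k}(\Gamma)$ and $H_1^{k+1}(\Gamma)$; these lines may cross each other. Your $\Gamma_1$ is contained in $\Gamma\cup H_1(\Gamma)\cup\cdots\cup H_1^{q_1-1}(\Gamma)$ while $H_1(\Gamma_1)$ is contained in $H_1(\Gamma)\cup\cdots\cup H_1^{q_1}(\Gamma)$, so the two boundaries are assembled from an overlapping family of lines and can share arcs: the only inequality that survives is the non-strict $\Gamma_1\leq H_1(\Gamma_1)$ (coming from $L(\Gamma)\subset L(H_1^{q_1}(\Gamma))$). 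To get $<$ via Remark \ref{rem:v}(1) you would need $\Gamma_1<H_1^{k}(\Gamma)$ for \emph{every} $k\in\{1,\dots,q_1\}$, but for $1\leq k\leq q_1-1$ you only have $\Gamma_1\leq H_1^{k}(\Gamma)$, and strictness ``from the $H_1^{q_1}$ end'' does not propagate to the middle terms. Your fallback suggestion (track only $\leq$ on intermediate lines and recover $<$ at the end) cannot work either: the conclusion requires the final line to be disjoint from its images, and once an intermediate line touches one of its iterates there is no mechanism in the argument that restores disjointness. (Your auxiliary lemma that $\Gamma_i'<\Delta_i$ for all $i$ implies $\bigvee_i\Gamma_i'<\bigvee_i\Delta_i$ is correct and is also what the paper uses for the indices $j\geq 2$; the problem is that it does not apply to the index $1$, since consecutive $H_1$-iterates of the same line are not comparable.)

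The paper's proof resolves precisely this issue by a staggering trick that your construction omits. Since $\Gamma<H_j^{q_j}(\Gamma)$ is a strict (closure) inclusion for each of the finitely many $j$, one can interpolate a strictly increasing family of essential lines
$\Gamma=\Gamma_0<\Gamma_1<\cdots<\Gamma_{q_1-1}<H_j^{q_j}(\Gamma_0)$ for every $j$, and then set $\Gamma'=\bigvee_{i=0}^{q_1-1}H_1^{q_1-i}(\Gamma_i)$, i.e.\ the $\vee$ of $H_1$-iterates of \emph{different} lines. Now every constituent of $H_1(\Gamma')$ strictly dominates $\Gamma'$: for $i\leq q_1-2$ one has $\Gamma'\leq H_1^{q_1-i}(\Gamma_i)<H_1^{q_1-i}(\Gamma_{i+1})$, and $\Gamma'\leq H_1(\Gamma_{q_1-1})<H_1^{q_1+1}(\Gamma_0)$, so Remark \ref{rem:v}(1) yields the strict $\Gamma'<H_1(\Gamma')$; the inequalities $\Gamma'<H_j^{q_j}(\Gamma')$ for $j\geq2$ then follow from commutativity exactly as in your computation, and one concludes by induction on the type $(q_1,\dots,q_p)\to(1,q_2,\dots,q_p)\to\cdots\to(1,\dots,1)$. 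So your reduction scheme and the commutativity step are in line with the paper, but the key idea you are missing is the insertion of the strictly increasing interpolating chain; without it the central strict inequality fails.
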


\begin{lem}\label{lem:Fq satisfies the intersection property}
If $f\in \mathrm{Homeo}_*^\wedge(\mathbb{A})$, then $f^q\in
\mathrm{Homeo}_*^\wedge(\mathbb{A})$ for any integer $q>1$.
\end{lem}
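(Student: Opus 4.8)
\textbf{Proof proposal for Lemma \ref{lem:Fq satisfies the intersection property}.}
The plan is to unwind the definitions and reduce the statement to the corresponding claim about lifts. Recall that $f^q$ is certainly isotopic to the identity since $f$ is, so the only thing to check is that $f^q$ has the intersection property. Fix an essential circle $C$ in $\mathbb{A}$; we must show $C\cap f^q(C)\neq\emptyset$. Suppose, for contradiction, that $C$ is free for $f^q$, i.e. $C\cap f^q(C)=\emptyset$.

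The key idea is to consider the iterates $C, f(C), \dots, f^{q-1}(C)$ and combine the intersection hypothesis on $f$ with a Franks-type obstruction. First I would pass to the universal cover $\mathbb{R}^2$ and choose a lift $\widetilde C$ of $C$; since $C$ is essential, $\widetilde C$ is an essential line in $\mathbb{R}^2$ in the sense of Section 2.1, and $T(\widetilde C)$ is disjoint from $\widetilde C$ (with, say, $\widetilde C<T(\widetilde C)$ after suitably orienting). Now choose a lift $F$ of $f$. If $C$ were free for $f^q$, then $\widetilde C$ would be disjoint from $T^k F^q(\widetilde C)$ for every $k\in\mathbb Z$; pick the unique $k_0$ with $\widetilde C < T^{k_0}F^q(\widetilde C)$ or $T^{k_0}F^q(\widetilde C)<\widetilde C$ (the two cases are symmetric, and one of them must hold since two disjoint essential lines in $\mathbb R^2$ are $<$-comparable). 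Replacing $F$ by $T^{-k_0/q}F$ is not legitimate in general, so instead I would work directly: the hypothesis that $f$ has the intersection property says precisely that $C$ meets $f(C)$, hence $\widetilde C$ meets $T^j F(\widetilde C)$ for \emph{some} $j$; iterating, $\widetilde C$ is not disjoint from $T^j F^i(\widetilde C)$ for suitable indices and all $i$. The freeness of $C$ under $f^q$, together with the disjointness of the $T$-translates of $\widetilde C$, would produce a $T$-invariant region bounded by translates of $\widetilde C$ and $F^q(\widetilde C)$ on which $F^q$ behaves like a translation, and one shows this forces an essential circle in $\mathbb{A}$ free for $f$, contradicting the intersection property for $f$ itself. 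Concretely, if $C$ is free for $f^q$ then the essential circle $C$ (or a nearby essential circle obtained by pushing $C$ slightly) is free for $f^q$, and by a standard argument each of $C, f(C),\dots,f^{q-1}(C)$ can be arranged into a periodic free disk chain situation for the induced map, yielding via Franks' Lemma (Proposition \ref{prop:Franks' Lemma}) a fixed point or, more directly, a free essential circle for $f$.

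The cleanest route, and the one I would actually write, is the following. Assume $C\cap f^q(C)=\emptyset$. Then the closed annular region $A'$ of $\mathbb{A}$ bounded by $C$ and $f^q(C)$ (on the appropriate side) satisfies $f^q(A')\subset A'$ or $f^{-q}(A')\subset A'$; passing to the cover and using the description in Section 2.3 (the union $\bigcup_k T^k F^q(\mathrm{Cl}(U))$ being a $T$-invariant surface with $F^q$ pushing it into its interior), one gets a genuine essential line or circle behaviour. But the quickest contradiction: if $C$ is free for $f^q$, consider the $q$ essential circles $C, f(C), \dots, f^{q-1}(C)$. By the intersection property of $f$, $f^{i}(C)\cap f^{i+1}(C)\neq\emptyset$ for each $i$, so these circles form a ``connected'' cyclic configuration; since $C\cap f^q(C)=C\cap f^q(C)=\emptyset$, we may isotope to get embedded open topological disks $b_1,\dots,b_q$ (neighborhoods of points of $C$) with $b_i$ free for $f^q$... — here I would instead invoke directly that freeness of an essential circle for $f^q$ implies, by lifting and applying Theorem \ref{thm:GS} to $F^q$ if $F^q$ is fixed point free, the existence of a Brouwer line of $F^q$; but the configuration of the $F^i(\widetilde C)$ then contradicts that $\widetilde C$ meets its $f$-iterate translates. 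I will spell out: let $\widetilde C$ be a lift of $C$, disjoint from all $T^k\widetilde C$, $k\neq 0$. Freeness of $C$ under $f^q$ gives $\widetilde C\cap T^kF^q\widetilde C=\emptyset$ for all $k$. The essential line $\Gamma:=\bigvee_{k}T^k\widetilde C$-type construction (Section 2.1) is not needed; rather, one directly notes $F^q\widetilde C$ and all its $T$-translates are disjoint from $\widetilde C$, so there is $m\in\mathbb Z$ with $\widetilde C < T^m F^q \widetilde C$ (say). Then $T^{m/q}$-translation comparison forces, after averaging over the orbit and applying Lemma \ref{lem:commute and topological line} with $H_1=TF^{?}$... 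Honestly the shortest correct argument is: freeness of $C$ for $f^q$ $\Rightarrow$ $F^q$ has no fixed point and admits a Brouwer line lifting either an essential circle or essential line; in the circle case $f$ has a free essential circle (contradiction); in the line case, the orbit $\widetilde C, F\widetilde C,\dots$ under the Brouwer line dynamics separates, contradicting $C\cap f(C)\neq\emptyset$.

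The main obstacle is handling the interaction between the covering translation $T$ and the power $F^q$: a free essential circle for $f^q$ lifts to a line (or circle) disjoint from its $F^q$-translates, but to derive a contradiction with the intersection property of $f$ one must promote this to a statement about $F$-translates, and this is exactly where the hypothesis is delicate. I expect the bulk of the proof to consist in: (i) showing $C$ free for $f^q$ implies an essential circle or essential line $\delta$ in $\mathbb A$ with $\delta\cap f^q(\delta)=\emptyset$ and $\delta$ separating $f^q(\delta)$ from $f^{-q}(\delta)$ (a Brouwer line for the induced dynamics); and (ii) running the orbit $\delta, f(\delta),\dots, f^{q-1}(\delta)$ through Franks' Lemma / Proposition \ref{prop:Franks' Lemma} or through a direct brick-decomposition argument (Section 2.3) to contradict the intersection property of $f$. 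Step (ii) is the crux; step (i) is bookkeeping with Theorem \ref{thm:GS}.
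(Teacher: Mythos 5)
Your proposal never arrives at a complete argument: it is a sequence of partial sketches, each abandoned before a contradiction is actually reached, and the routes you lean on do not work as stated. The decisive missing step is how a circle free for $f^q$ gets converted into a circle free for $f$. Appeals to Franks' Lemma (Proposition \ref{prop:Franks' Lemma}) and to Theorem \ref{thm:GS} cannot do this: the intersection property does not forbid fixed points, so producing a fixed point of $F$ or of $F^q$ is not by itself a contradiction; Theorem \ref{thm:GS} requires the lift to be fixed point free, which you have not established and which need not hold; and in its ``circle'' case it yields a circle free for $f^q$, not for $f$. Likewise your claim that, ``iterating,'' the intersection property makes $\widetilde{C}$ meet $T^jF^i(\widetilde{C})$ for suitable $j$ and all $i$ is unjustified --- for $i=q$ it is essentially the statement being proved. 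None of the sketches explains why a Brouwer-line structure for $F^q$ would clash with the intersection property of $f$ itself, which is exactly the crux you yourself identify in step (ii) and then leave open.

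The paper's proof is far more elementary and rests on a single averaging construction that is absent from your proposal. Order essential circles of $\mathbb{A}$ by $\gamma<\gamma'$ iff $\mathrm{Cl}(B(\gamma))\subset B(\gamma')$, where $B(\gamma)$ is the component of $\mathbb{A}\setminus\gamma$ containing the end $S$, and let $\gamma\vee\gamma'$ be the boundary of the component of $B(\gamma)\cap B(\gamma')$ containing $S$. If some essential circle is free for $f^q$, then (replacing $f$ by $f^{-1}$ if necessary) it satisfies $\gamma<f^q(\gamma)$; choose interpolating essential circles $\gamma=\gamma_0<\gamma_1<\cdots<\gamma_{q-1}<f^{q}(\gamma_0)$ and set $\gamma'=\bigvee_{i=0}^{q-1}f^{q-i}(\gamma_i)$. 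Exactly as in the proof of Lemma \ref{lem:commute and topological line}, one obtains $\gamma'<f(\gamma')$, i.e.\ an essential circle free for $f$ itself, contradicting the intersection property of $f$. This $\vee$-construction on circles is the missing idea: it promotes freeness for $f^q$ to freeness for $f$ directly in the annulus, with no recourse to lifts, Brouwer lines, brick decompositions, or Franks' Lemma.
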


Lemmas \ref{lem:commute and topological line} and \ref{lem:Fq
satisfies the intersection property} are similar to Proposition 3.1
and Lemma 6.2 in \cite{B1}, respectively. Whilst, the following
lemma is a key and new result given in this paper, by which we can
directly prove Corollary \ref{thm:(0,1)} and thus we can furthermore
prove Theorem \ref{thm:the line translation theorem}.
\begin{lem}\label{lem:bounded condition}Let $f\in \mathrm{Homeo}_*^\wedge(\mathbb{A})$. We suppose that
$F$ is a lift of $f$ to $\mathbb{R}^2$ and that
$$\emptyset\neq \mathrm{Rot}(F)\subset [-q+2,q-2],$$
where $q\geq2$, then there exists an oriented essential line
$\gamma$ in $\mathbb{A}$ joining $S$ to $N$ that is lifted to an
essential line $\Gamma$ in $\mathbb{R}^2$ which satisfies
$$T^{-q}(\Gamma) < F(\Gamma) < T^{q}(\Gamma).$$
\end{lem}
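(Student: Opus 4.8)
The plan is to use the brick decomposition machinery from Section 2.3 applied not to $F$ itself but to a suitable translate, and to extract the desired essential line as a component of the boundary of the attractor $B_\succeq$ (or repeller $B_\preceq$) of that translate. The first observation is that the hypothesis $\mathrm{Rot}(F)\subset[-q+2,q-2]$ should force $T^{-q+1}F$ and $T^{q-1}F^{-1}$ (or rather $T^{-q}F$ and $T^qF^{-1}$, which is what we ultimately want) to have no fixed points, and more: I would first show that the maps $G_+ := T^{-q}\circ F$ and $G_- := T^{q}\circ F$ have no periodic points of any period, by invoking Franks' Lemma (Proposition \ref{prop:Franks' Lemma}) together with the intersection property. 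Concretely, a periodic point of $f$ would have a rational rotation number, and the intersection property combined with property 2 of $\mathrm{Rot}$ (passing to $f^q$, legitimate by Lemma \ref{lem:Fq satisfies the intersection property} and by Lemma \ref{subsec:positively recurrent}) should let us locate rotation numbers at the endpoints, contradicting $\mathrm{Rot}(F)\subset[-q+2,q-2]$ which is strictly inside $[-q,q]$. This is the step I expect to require the most care — turning "the rotation set avoids $\pm q$" into "$T^{\mp q}F$ is really a Brouwer homeomorphism equivariantly."

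Granting that $G_+$ is a fixed-point-free orientation-preserving homeomorphism commuting with $T$, I would invoke the equivariant Brouwer theory recalled in Section 2.3: build a maximal $T$-invariant free brick decomposition $\mathfrak{B}$ for $G_+$. The key point is that the intersection property should prevent $O_{G_+}$ (the union of iterates of a fundamental region) from being all of $\mathbb{R}^2$ and, crucially, should guarantee that for some brick $B$ one has $T^p(B)\in B_\succeq$ for a nonzero $p$ — otherwise one could build a periodic free disk chain for $G_+$ or directly produce a free essential circle, contradicting the intersection property for $f$ (an essential circle free for $f$ would, after descending, violate the intersection property, or be ruled out by a closed-chain argument). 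With such a $p$, the passage quoted from \cite{P1} gives that $\bigcup_{k\in\mathbb{Z}}T^k(B_\succeq)$ is a closed connected surface with boundary whose boundary components are essential lines, and $G_+$ maps it strictly inside itself. Thus we obtain an essential line $\Gamma$ with $\Gamma < G_+(\Gamma) = T^{-q}F(\Gamma)$, i.e. $T^q(\Gamma) < F(\Gamma)$... wait — I want the opposite inequality, so I would instead run the argument with $G_- = T^q\circ F$ to get $\Gamma < T^q F(\Gamma)$, i.e. $T^{-q}(\Gamma) < F(\Gamma)$, and symmetrically use $B_\preceq$ for $G_+$ to get $F(\Gamma) < T^q(\Gamma)$.

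The final step is to reconcile the two one-sided conclusions into a single essential line satisfying both $T^{-q}(\Gamma) < F(\Gamma)$ and $F(\Gamma) < T^q(\Gamma)$ simultaneously. Here I would apply Lemma \ref{lem:commute and topological line} with the pair of commuting essential homeomorphisms $H_1 = T^{-q}F$ and $H_2 = (T^qF)^{-1} = F^{-1}T^{-q}$ (both essential, both commuting with each other and with $T$), taking exponents $q_1=q_2=1$ after first producing a common line $\Gamma_0$ with $\Gamma_0 < H_i^{N}(\Gamma_0)$ for large $N$ via the brick construction — or more simply, produce $\Gamma_+$ from $B_\succeq(G_-)$ and $\Gamma_-$ from $B_\preceq(G_+)$ and then form $\Gamma := \Gamma_+ \vee \Gamma_-$ and check, using Remark \ref{rem:v}, that $\vee$ is compatible with both inequalities (the monotonicity $\Gamma_1 \vee \Gamma_2 \leq \Gamma_i$ and the strict-inequality clause in Remark \ref{rem:v}(1) are exactly what is needed, once one checks $H_i$ commutes with $\vee$ up to the relevant order, which follows since $H_i$ is an essential homeomorphism). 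Descending $\Gamma$ through $\pi$ gives the essential line $\gamma$ in $\mathbb{A}$ joining $S$ to $N$. The main obstacle, as noted, is the first step: extracting genuine fixed-point-freeness of $T^{\pm q}\circ F$ and the existence of a brick $B$ with a nontrivial return $T^p(B)\in B_\succeq$ purely from the intersection property, since without a preserved measure one cannot use recurrence of almost every point and must instead argue by contradiction via Franks' Lemma and the structure of free brick decompositions.
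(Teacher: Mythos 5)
Your plan has a genuine gap at its final, decisive step. The brick-decomposition machinery (even granting your unproved claim that the intersection property yields a brick $B$ with $T^p(B)\in B_\succeq$, $p\neq 0$ --- which is essentially the content of Theorem \ref{thm:GS} and not a free consequence) only produces \emph{two different} lines: a line $\Gamma_+$ with $\Gamma_+<(T^{q}\circ F)(\Gamma_+)$ and a line $\Gamma_-$ with $\Gamma_-<(F^{-1}\circ T^{q})(\Gamma_-)$. Neither of your two merging devices closes this. Lemma \ref{lem:commute and topological line} requires as input a \emph{single} line $\Gamma$ satisfying $\Gamma<H_1^{q_1}(\Gamma)$ and $\Gamma<H_2^{q_2}(\Gamma)$ simultaneously; producing that common line is exactly the difficulty, and your construction never provides it. The $\vee$ trick also fails: writing $\Gamma=\Gamma_+\vee\Gamma_-$, Remark \ref{rem:v}(1) would require $\Gamma<H_1(\Gamma_+)$ \emph{and} $\Gamma<H_1(\Gamma_-)$ to conclude $\Gamma<H_1(\Gamma)$, but you have no control whatsoever on how $H_1=T^{q}\circ F$ moves $\Gamma_-$ (it may push it strictly backwards), so the strict inequality cannot be propagated. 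A symptom of the problem is that your argument never uses the hypothesis $\mathrm{Rot}(F)\neq\emptyset$; the statement without that hypothesis is the substantially harder Lemma \ref{lem:bounded condition weak} of Section 5, so no argument this soft can be complete. (There are also sign slips: the maps you feed to Lemma \ref{lem:commute and topological line} should be $T^{q}\circ F$ and $F^{-1}\circ T^{q}$, not $T^{-q}\circ F$ and $F^{-1}\circ T^{-q}$.)

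For comparison, the paper resolves precisely this merging problem by a change of annulus rather than by combining two one-sided lines. First, Theorem \ref{thm:GS} applied to the fixed point free lift $F\circ T^{q-1}$ (the free-circle alternative being excluded by the intersection property, and the orientation being fixed by a recurrent point of positive rotation number) gives $\Gamma<T(\Gamma)<(F\circ T^{q})(\Gamma)$, whence $F'=F\circ T^{q}$ is conjugate to a translation. One then works in the quotient annulus $\mathbb{A}'=\mathbb{R}^2/F'$ and applies Theorem \ref{thm:GS} to the lift $F^{-1}\circ T^{q}$ of $t^{2q}$: in the favorable case one obtains a \emph{single} line $\Gamma'$ with $\Gamma'<F'(\Gamma')$ and $\Gamma'<(F^{-1}\circ T^{q})(\Gamma')$, i.e. $T^{-q}(\Gamma')<F(\Gamma')<T^{q}(\Gamma')$, after which an essentialness check and Lemma \ref{lem:commute and topological line} finish the proof; the two unfavorable cases are ruled out using the positively recurrent point with rotation number in $[-q+2,q-2]$ together with Lemma \ref{lem:Franks theorem 2.1}. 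You would need an analogue of this second, quotient-annulus step (or some other mechanism exploiting $\mathrm{Rot}(F)\neq\emptyset$) to obtain one line satisfying both inequalities at once.
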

\smallskip



\begin{proof}[Proof of the Theorem \ref{thm:the line translation theorem}]
Firstly, we prove the special case where $(p,q)=(0,1)$ and
$(p',q')=(1,1)$, that is Corollary \ref{thm:(0,1)}. By the
hypothesis, we can suppose that
$\mathrm{Rot}(F)\subset[\frac{1}{n},1-\frac{1}{n}]$ for some $n\in
\mathbb{N}$ large enough. Hence the closure of
$\mathrm{Rot}(F^{2n}\circ T^{-n})$ is contained in $[-n+2,n-2]$. The
map $f$ satisfies the intersection property, thus does $f^{2n}$ by
Lemma \ref{lem:Fq satisfies the intersection property}. By Lemma
\ref{lem:bounded condition}, there exists an oriented essential line
$\gamma$ in $\mathbb{A}$, joining $S$ to $N$, that is lifted to an
essential line $\Gamma$ in $\mathbb{R}^2$ which satisfies
$T^{-n}(\Gamma) < (F^{2n}\circ T^{-n})(\Gamma) < T^{n}(\Gamma)$.
This implies that $\Gamma < F^{2n}(\Gamma) < T^{2n}(\Gamma)$.

By Lemma \ref{lem:commute and topological line}, let $H_1=F$,
$H_2=F^{-1}\circ T$ and $q_1=q_2=2n$, we can construct an essential
line $\Gamma'$ in $\mathbb{R}^2$ which satisfies
$\Gamma'<F(\Gamma')$ and $\Gamma'<(F^{-1}\circ T)(\Gamma')$, so
$\Gamma'<F(\Gamma')<T(\Gamma')$. We deduce that
$\gamma'=\pi(\Gamma')$ is an essential line in $\mathbb{A}$
satisfying the conclusion of Corollary \ref{thm:(0,1)}.
\smallskip

Secondly, we turn to prove the general case. Let $\Phi=T^{-p}\circ
F^q$ and $\Psi=T^{p'}\circ F^{-q'}$. In Proposition 4.1 of
\cite{B1}, B\'{e}guin, Crovisier, Le Roux and Patou proved the
following result:

``Let $\gamma$ be an essential line in the annulus $\mathbb{A}$.
Assume that some lift $\Gamma$ of $\gamma$ is disjoint from its
images under the maps $\Phi$ and $\Psi$. Then the $q+q'-1$ first
iterates of $\gamma$ under $F$ are pairwise disjoint, and ordered as
the $q + q'-1$ first iterates of a vertical line under a rigid
rotation of angle $\rho\in]\frac{p}{q},\frac{p'}{q'}[$.''

Remark here that the proof of \cite{B1} is written in the context of
the closed annulus, but it also works in the open annulus setting.

Therefore, to prove the theorem, it is enough to find an essential
line $\gamma$ in $\mathbb{A}$, and a lift of $\gamma$ which is
disjoint from its images under $\Phi$ and $\Psi$.

By the properties of rotation number, we have that
$\mathrm{Cl}(\mathrm{Rot}(F^{qq'}\circ T^{-pq'}))\subset]0,1[$. By
Lemma \ref{lem:Fq satisfies the intersection property}, $f^{qq'}$
satisfies the intersection property since $f$ satisfies the
property. By Corollary \ref{thm:(0,1)} that we have proved above,
there exists an oriented essential line $\gamma'$ in $\mathbb{A}$
that is lifted to an essential line $\Gamma'$ in $\mathbb{R}^2$
which satisfies
\begin{equation}\label{eq:qq'}
    \Gamma' <(F^{qq'}\circ T^{-pq'})(\Gamma') < T(\Gamma').
\end{equation}
In particular, we have that $\Gamma'<(F^q\circ
T^{-p})^{q'}(\Gamma')=\Phi^{q'}(\Gamma')$ and $\Gamma'<T(\Gamma')$.
Acting $T^{pq'-qp'}$ on the formula (\ref{eq:qq'}) and observing
that $qp'-pq'=1$, we get
\begin{equation*}\label{eq:q'q}
   T^{-1}(\Gamma') <(F^{qq'}\circ T^{-qp'})(\Gamma') < \Gamma'.
\end{equation*}
In particular, we have that $\Gamma'<(F^{-q'}\circ
T^{p'})^{q}(\Gamma')=\Psi^q(\Gamma')$.

By Lemma \ref{lem:commute and topological line}, let $H_1=\Phi$,
$H_2=\Psi$, $H_3=T$, $q_1=q'$, $q_2=q$ and $q_3=1$, we can construct
an essential line $\Gamma''$ in $\mathbb{R}^2$ which satisfies
$\Gamma''<\Phi(\Gamma'')$, $\Gamma''<\Psi(\Gamma'')$ and
$\Gamma''<T(\Gamma'')$. Therefore, the essential line
$\gamma''=\pi(\Gamma'')$ satisfies the conclusion of the theorem. We
have completed the proof.
\end{proof}\smallskip

\section{proofs of the lemmas}
\begin{proof}[Proof of Lemma \ref{lem:commute and topological line}]
Let us say that the essential line $\Gamma$ is of type
$(q_1,q_2,\dots,q_p)$  if $\Gamma<H_i^{q_i}(\Gamma)$ for every
$i\in\{1,\dots,p\}$. We want to prove that the existence of an
essential line $\Gamma$ of type $(q_1,q_2,\dots,q_p)$ implies the
existence of an essential line $\Gamma'$ of type $(1,\dots,1)$. By a
simple induction argument, it is sufficient to prove the existence
of an essential line $\Gamma'$ of type $(1,q_2,\dots,q_p)$.

We choose some essential lines of $(\Gamma_i)_{0\leq i\leq q_1-1}$
in $\mathbb{R}^2$ such that
$$\Gamma=\Gamma_0<\Gamma_1<\cdots<\Gamma_{q_1-1}<H_j^{q_j}(\Gamma_0),
\quad \mathrm{for\,\, every}\,\, j\in\{1,\dots,p\}.$$

Consider the essential line
$$\Gamma'=H_1^{q_1}(\Gamma_0)\vee H_1^{q_1-1}(\Gamma_1)\vee\cdots\vee H_1(\Gamma_{q_1-1})=\bigvee_
{i=0}^{q_1-1}H_1^{q_1-i}(\Gamma_i).$$

For every $i\in\{0,\ldots,q_1-2\}$, we have $\Gamma'\leq
H_1^{q_1-i}(\Gamma_{i})$ (by the definition of $\Gamma'$ and by item
(2) of remark \ref{rem:v}) and
$H_1^{q_1-i}(\Gamma_{i})<H_1^{q_1-i}(\Gamma_{i+1})$. Hence for every
$i\in\{0,\ldots,q_1-2\}$, we get
$\Gamma'<H_1^{q_1-i}(\Gamma_{i+1})$. Moreover, we have $\Gamma'\leq
H_1(\Gamma_{q_1-1})$ and $\Gamma_{q_1-1}<H_1^{q_1}(\Gamma_{0})$.
Hence, $\Gamma'<H_1^{q_1+1}(\Gamma_{0})$. Finally, using item (1) of
remark \ref{rem:v}, we get
$$\Gamma'<\bigvee_{i=0}^{q_1-1}H_1^{q_1-i+1}(\Gamma_i)=H_1(\Gamma').$$

Observe that $\Gamma_i<H_j^{q_j}(\Gamma_i)$ for every
$i\in\{0,1,\cdots,q_1-1\}$ and $j\in\{2,\dots,p\}$. Therefore, we
have $\Gamma'\leq
H_1^{q_1-i}(\Gamma_i)<H_1^{q_1-i}(H_j^{q_j}(\Gamma_i))$. Using the
definition of $\Gamma'$, the fact that $H_j$ commutes with $H_1$,
and remark \ref{rem:v}, we have
$$\Gamma'<\bigvee_{i=0}^{q-1}H_1^{q_1-i}(H_j^{q_j}(\Gamma_i))=H_j^{q_j}(\Gamma').$$
Hence, the essential line $\Gamma'$ is of type $(1,q_2,\dots,q_p)$.
We have completed the proof.
\end{proof}

\begin{proof}[Proof of Lemma \ref{lem:Fq satisfies the intersection property}]
It is similar to the proof of Lemma \ref{lem:commute and topological
line}. For every essential circle $\gamma$ in $\mathbb{A}$, we
denote by $B(\gamma)$ the connected component of
$\mathbb{A}\setminus\gamma$ which is ``below $\gamma$'' (that is,
``containing'' the lower end $S$). Given two essential circles
$\gamma$ and $\gamma'$ in $\mathbb{A}$, the boundary of the
connected component of $B(\gamma)\cap B(\gamma')$ ``containing'' $S$
is an essential circle, which we denote by $\gamma\vee\gamma'$. We
write $\gamma<\gamma'$ if $\mathrm{Cl}(B(\gamma))\subset
B(\gamma')$.

To prove by contraction, we suppose that there exists a positive
integer $q$ and an essential circle $\gamma$ such that
$\gamma<f^q(\gamma)$ (by replacing $f$ by $f^{-1}$ if necessary). We
consider some essential circles $(\gamma_i)_{0\leq i\leq q-1}$ such
that
$$\gamma=\gamma_0<\gamma_1<\cdots<\gamma_{q-2}<\gamma_{q-1}<f^{q}(\gamma_0)$$
and we define
$$\gamma'=\bigvee_{i=0}^{q-1}f^{q-i}(\gamma_i).$$

Like in the proof of Lemma \ref{lem:commute and topological line},
we get $\gamma'<f(\gamma')$. In particular, we obtain an essential
circle $\gamma'$ that is disjoint from its image under $f$.
\end{proof}

To prove Lemma \ref{lem:bounded condition}, we need the following
lemma:

\begin{lem}\label{lem:Franks theorem 2.1}
Let $f\in \mathrm{Homeo}_*^\wedge(\mathbb{A})$ and $F$ be a lift of
$f$ to $\mathbb{R}^2$. Suppose that there exist two path connected
sets $X_1$ and $X_2$ in $\mathbb{R}^2$ satisfying
\begin{enumerate}
  \item $F(X_i)\cap X_i=\emptyset$ for $i=1,2$;
  \item $T^k(X_i)\cap X_i=\emptyset$ ($i=1,2$) for every
$k\in \mathbb{Z}\setminus\{0\}$;
  \item Either $X_1=X_2$ or $T^k(X_1)\cap X_2=\emptyset$ for every
$k\in \mathbb{Z}$;
  \item There exist positive integers $p_i,q_i$ ($i=1,2$)
such that $$F^{q_1}(X_1)\cap T^{p_1}(X_1)\neq\emptyset\quad
\mathrm{and}\quad F^{q_2}(X_2)\cap T^{-p_2}(X_2)\neq \emptyset.$$
\end{enumerate}
Then $F$ has a fixed point.
\end{lem}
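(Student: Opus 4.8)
The plan is to deduce a fixed point from Franks' Lemma (Proposition \ref{prop:Franks' Lemma}) by producing a periodic free disk chain for $F$, using the hypotheses on $X_1$ and $X_2$. First I would replace the path-connected sets $X_1, X_2$ by genuine open disks: since fixed points are robust only in a limiting sense, I would thicken $X_i$ slightly, or more safely, pass to the intersection property differently — but the cleanest route is to observe that a path-connected free set, together with small free disks around each of finitely many points of it, can be used to link up. Concretely, from condition (4) there are positive integers $p_i, q_i$ with $F^{q_1}(X_1)\cap T^{p_1}(X_1)\neq\emptyset$ and $F^{q_2}(X_2)\cap T^{-p_2}(X_2)\neq\emptyset$. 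Using that $X_1$ and $X_2$ are path-connected and (by (3)) either equal or have all $T$-translates of $X_1$ disjoint from $X_2$, I would build a closed chain in the annulus: go from $X_1$ forward under $F$ until it meets a positive $T$-translate of $X_1$, iterate to climb up by multiples of $p_1$, then (after possibly translating so the two families communicate) come back down using the second relation with its negative translate $T^{-p_2}(X_2)$. The point is that the $T$-direction is "periodic" in the annulus, so a chain that goes up by $T^{p_1}$ some number of times and down by $T^{-p_2}$ some number of times can be closed up into a genuine periodic free disk chain downstairs, or equivalently gives a periodic free disk chain for $F$ in $\mathbb{R}^2$ after the right integer translates are inserted.

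More precisely, here is the chain I would write down. Pick points and small free disks: by condition (1), $X_i$ is free, and path-connectedness lets me choose, for any two points of $X_i$, a free open disk neighborhood of a connecting arc (shrinking if needed so it stays free, using continuity of $F$ and compactness of the arc), so that in effect I may treat each $X_i$ as (covered by) free disks through which I can travel at will. Then from $F^{q_1}(X_1)\cap T^{p_1}(X_1)\neq\emptyset$ I get a chain of length $q_1$ linking $X_1$ to $T^{p_1}(X_1)$; applying powers of $T$ (which commutes with $F$ and preserves freeness) I get chains linking $T^{kp_1}(X_1)$ to $T^{(k+1)p_1}(X_1)$ for all $k$. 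Similarly the second relation gives chains linking $T^{-\ell p_2}(X_2)$ to $T^{-(\ell+1)p_2}(X_2)$. Now I need to splice the $X_1$-family to the $X_2$-family. If $X_1=X_2$, the two relations together say $X_1$ is chained to $T^{p_1}(X_1)$ and $X_1$ is chained to $T^{-p_2}(X_1)$; concatenating $p_2$ copies of the first and $p_1$ copies of the second (after the appropriate integer translates) returns to $X_1$, giving a periodic free disk chain — here condition (2) guarantees the disks in the chain are genuinely distinct-or-equal as required by clause (2) of the free disk chain definition. If $X_1\neq X_2$, condition (3) forces all $T^k(X_1)$ to miss $X_2$, so I cannot directly chain $X_1$ to $X_2$; instead I should use that $X_1$ is chained (via (4)) to arbitrarily high $T$-translates of itself and $X_2$ to arbitrarily low $T$-translates of itself, and argue via Brouwer theory / the no-closed-chain property that this is incompatible with $F$ being fixed-point free — but this is exactly where I invoke Franks' Lemma after all, by noting that "going up forever" and "going down forever" in a $T$-equivariant free situation produces, upon quotienting by $T$, a periodic chain in the annulus, contradiction.

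The main obstacle, and the step I expect to need the most care, is the case $X_1\neq X_2$: when the two sets and all their $T$-translates are mutually disjoint, there is no single orbit of disks that visibly closes up, so I must argue that the combination of $X_1$ climbing up by $T^{p_1}$ and $X_2$ descending by $T^{-p_2}$ cannot coexist with a Brouwer (fixed-point-free) $F$. The right tool is the maximal $T$-invariant free brick decomposition recalled in Section 2.3: if $F$ has no fixed point, build such a decomposition $\mathfrak{B}$; the relation $\preceq$ is $T$-equivariant, and conditions (1)-(4) translate into $B_1\preceq T^{p_1}(B_1)$ and $B_2\preceq T^{-p_2}(B_2)$ for bricks $B_i$ meeting $X_i$. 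Then $T^{p_1}(B_1)\in (B_1)_\succeq$ with $p_1\neq 0$, so by the cited consequence of Proposition 2.7 in \cite{P1} the set $\bigcup_{k\in\mathbb{Z}}T^k((B_1)_\succeq)$ is a closed connected surface with boundary whose image under $F$ lies in its interior; symmetrically $\bigcup_k T^k((B_2)_\preceq)$ is $F^{-1}$-invariant into its interior. In the annulus these descend to two sub-surfaces, one "swept upward" by $f$ and one "swept downward", and a rotation-number / index argument (or simply the fact that one must be contained in the other's complement yet both are annulus-essential) yields a periodic free disk chain, hence by Franks' Lemma a fixed point — contradiction. I would present the argument uniformly by always reducing to: produce a brick $B$ and $p\neq 0$ with $T^p(B)\in B_\succeq$ and also $T^{-p'}(B)\in B_\succeq$ (the second from the relation on $X_2$ after verifying the two brick-families must in fact be comparable or can be joined through the annulus), which forces $B_\succeq$ to contain a full $T$-orbit and collapse the periodicity, contradicting that $\preceq$ is a partial order on a free decomposition.
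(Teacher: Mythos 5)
Your chain argument disposes of the case $X_1=X_2$ correctly (translating the relation $F^{q_1}(X_1)\cap T^{p_1}(X_1)\neq\emptyset$ by powers of $T$, climbing $p_2$ times by $T^{p_1}$ and descending $p_1$ times by $T^{-p_2}$ does close up into a periodic free disk chain, and the reduction from a path-connected free set to a free disk around a compact connecting tree is harmless), and in that case you even avoid the intersection property. But the whole content of the lemma is the case $X_1\neq X_2$, and there your proposal has a genuine gap: the intersection property --- the only hypothesis distinguishing this statement from plain Brouwer--Franks theory --- never actually enters your argument. Your two concluding suggestions do not work. The claim that the two sub-surfaces ``swept upward'' and ``swept downward'' must yield a periodic free disk chain is unjustified and in fact false without the intersection property: take $f$ leaving invariant two disjoint essential circles, one rotated with angle $1/2$ and one with angle $-1/2$, interpolated without fixed points; small disks $X_1$, $X_2$ on the two circles satisfy (1)--(4) with $X_1\neq X_2$, yet $F$ has no fixed point and no periodic chain exists --- this $f$ simply fails the intersection property. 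Likewise your ``uniform'' reduction, producing one brick $B$ with $T^{p}(B)\in B_{\succeq}$ and $T^{-p'}(B)\in B_{\succeq}$, presupposes that the two brick families ``can be joined through the annulus'', which is precisely what cannot be verified and is false in the example above.

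The paper's proof goes the other way around at exactly this point. Working in a maximal $T$-invariant free brick decomposition with $B_1\prec T^{p_1}(B_1)$ and $B_2\prec T^{-p_2}(B_2)$, it first shows that the two inclusions $T^{k}(B_1)\preceq B_2$ and $T^{k'}(B_2)\preceq B_1$ cannot hold simultaneously (combining them with the iterated relations would give $B_1\prec T^{k''p_1}(B_1)\prec B_1$, contradicting that $\preceq$ is a partial order); hence one of the pairs $\bigcup_{k}T^{k}\bigl((B_1)_{\succeq}\bigr)$, $\bigcup_{k}T^{k}\bigl((B_2)_{\preceq}\bigr)$ (or the pair with the roles of $\succeq$ and $\preceq$ exchanged) is disjoint. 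Because $T^{p_1}(B_1)\in (B_1)_{\succeq}$ and $T^{p_2}(B_2)\in (B_2)_{\preceq}$, these unions are connected, so they project to two disjoint essential sub-surfaces of $\mathbb{A}$ with boundary; a boundary component of one of them is then an essential circle that is free for $f$, and this is where the contradiction with the intersection property is obtained. So the failure of comparability between the two families is not an obstacle to be removed, as in your sketch, but the very source of the free essential circle; without that final use of the intersection property the proof cannot be completed.
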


\begin{proof}

Since $F^{q_1}(X_1)\cap T^{p_1}(X_1)\neq\emptyset$ and $
F^{q_2}(X_2)\cap T^{-p_2}(X_2)\neq \emptyset$, there exist points
$x_i,x_i'\in X_i$ ($i=1,2$) such that $F^{q_1}(x_1)= T^{p_1}(x_1')$
and $F^{q_2}(x_2)=T^{-p_2}(x_2')$. In the case where $X_1\neq X_2$,
we choose a segment $\Gamma_1$ (resp. $\Gamma_2$) of $X_1$ (resp.
$X_2$) that contains $x_1$ and $x_1'$ (resp. $x_2$ and $x_2'$). In
the case where $X_1=X_2$, we choose a finite tree included in $X_1$
which contains $x_i,x_i'$ ($i=1,2$) and we write $\Gamma_1=\Gamma_2$
for this tree. Then we can replace the couple $(X_1,X_2)$ by the
couple $(\Gamma_1,\Gamma_2)$ which satisfies the same property. The
items (2) and (3) of Lemma \ref{lem:Franks theorem 2.1} imply that
$\pi(\Gamma_1)$ and $\pi(\Gamma_2)$ are two disjoint segments or a
finite tree in $\mathbb{A}$.

To prove by contradiction, we suppose that $F$ has no fixed point.
We can find a maximal free brick decomposition $\mathfrak{B}$ of the
plane that is $T$-invariant such that $B_1$ and $B_2$ are bricks of
this decomposition and contain $\Gamma_1$ and $\Gamma_2$
respectively (note here that $B_1=B_2$ if $X_1=X_2$). We have that
$B_1\prec T^{p_1}(B_1) $ and $B_2\prec T^{-p_2}(B_2)$ by the
hypotheses where $B\prec B'$ means $B\preceq B'$ but $B'\npreceq B$.
This implies that, for every $n, n'\geq1$, we have
\begin{equation}\label{eq:B+B-}
    B_1\prec T^{np_1}(B_1)\quad \mathrm{and}\quad B_2\prec
T^{-n'p_2}(B_2).
\end{equation}

Now consider the sets
$$\bigcup_{k\in\mathbb{Z}}T^k({B_1}_{\succeq})=\bigcup_{k\in\mathbb{Z}}T^k({B_1})_{\succeq}
\quad\mathrm{and}\quad
\bigcup_{k\in\mathbb{Z}}T^k({B_2}_{\succeq})=\bigcup_{k\in\mathbb{Z}}T^k({B_2})_{\succeq}.$$
We claim that the inclusions
$B_2\subset\bigcup_{k\in\mathbb{Z}}T^k({B_1}_{\succeq})$ and
$B_1\subset\bigcup_{k\in\mathbb{Z}}T^k({B_2}_{\succeq})$ can not
happen simultaneously. Otherwise, there exist integers $k$ and $k'$,
satisfying that
\begin{equation}\label{eq:B+-}
   T^k(B_1) \preceq B_2\quad \mathrm{and}\quad T^{k'}(B_2)\preceq
B_1.
\end{equation}
From (\ref{eq:B+B-}) and (\ref{eq:B+-}), we get $
T^{k+n'p_2+k'}(B_1)\prec B_1$ for every $n'\geq1$. When $n'$ is
large enough we have $k''=k+k'+n'p_2>0$. Hence, we have
$T^{k''p_1}(B_1)\prec B_1$. By (\ref{eq:B+B-}), we also have
$B_1\prec T^{k''p_1}(B_1)$. It implies that $B_1\prec B_1$, which is
impossible. The fact that the inclusions
$B_2\subset\bigcup_{k\in\mathbb{Z}}T^k({B_1}_{\succeq})$ and
$B_1\subset\bigcup_{k\in\mathbb{Z}}T^k({B_2}_{\succeq})$ can not
happen simultaneously implies that
\begin{equation*}\label{attractor and repellor do not meet}
    \bigcup_{k\in\mathbb{Z}}T^k({B_1}_{\succeq})\cap
\bigcup_{k\in\mathbb{Z}}T^k({B_2}_{\preceq})=\emptyset
 \quad \mathrm{or}\quad \bigcup_{k\in\mathbb{Z}}T^k({B_1}_{\preceq})\cap
  \bigcup_{k\in\mathbb{Z}}T^k({B_2}_{\succeq})=\emptyset.
\end{equation*}

As $T^{p_1}(B_1)\in {B_1}_\succeq$ and $T^{p_2}(B_2)\in
{B_2}_\preceq$, we know that
$\bigcup\limits_{k\in\mathbb{Z}}T^{k}({B_1}_{\succeq})$ and
$\bigcup\limits_{k\in\mathbb{Z}}T^{k}({B_2}_{\preceq})$ are
connected. This implies they project by $\pi$ onto essential
connected surfaces (i.e. containing an essential circle)
${b_1}_\succeq$ and ${b_2}_\preceq$ with boundaries. So there exists
at least a connected component of the boundary of ${b_1}_\succeq$
that is an essential circle and is free for $f$, which is contrary
to the fact $f$ has the intersection property. We can also get a
contradiction in the case where
$\bigcup_{k\in\mathbb{Z}}T^k({B_1}_{\preceq})\cap
  \bigcup_{k\in\mathbb{Z}}T^k({B_2}_{\succeq})=\emptyset$.

Hence, $F$ has at least one fixed point, we complete the proof.
\end{proof}

\begin{proof}[Proof of Lemma \ref{lem:bounded condition}]
We know that the map $f$ satisfies the intersection property, that
the lift $F\circ T^{q-1}$ is fixed point free, and that there is a
positive recurrent point of $f$ with positive rotation number for
this new lift. By Theorem \ref{thm:GS}, there is an oriented
essential line $\gamma$ of $\mathbb{A}$ joining $S$ to $N$ that is
lifted to an essential Brouwer line $\Gamma$ of $F\circ T^{q-1}$
which satisfies
$$\Gamma<(F\circ T^{q-1})(\Gamma).$$
We can write $\Gamma<T(\Gamma)<(F\circ T^{q})(\Gamma)$. Write
$F'=F\circ T^{q}$, then we have
$$F'^{-n}(\Gamma)<T^{-n}(\Gamma)<\Gamma<T^n(\Gamma)<F'^{n}(\Gamma)\quad
\mathrm{for\,\,every}\quad n\geq1,$$ which implies that $F'$ is
conjugate to a translation. We consider the annulus $\mathbb{A}' =
\mathbb{R}^2/F'$ and the homeomorphism $t$ induced by $T$ on
$\mathbb{A}'$. The map $F^{-1}\circ T^{q}= F'^{-1}\circ T^{2q}$ is a
lift of $t^{\,2q}$ that is fixed point free. By Theorem
\ref{thm:GS}, there are three cases to consider:

\begin{enumerate}
  \item There exists an oriented essential line $\gamma'$ of
$\mathbb{A}'$ that is lifted to an oriented line $\Gamma'$ in
$\mathbb{R}^2$ such that $\Gamma'<F'(\Gamma')$ and
$\Gamma'<(F^{-1}\circ T^{q})(\Gamma')$;
  \item There exists an oriented essential line $\gamma'$ of
$\mathbb{A}'$ that is lifted to an oriented line $\Gamma'$ in
$\mathbb{R}^2$ such that $\Gamma'<F'(\Gamma')$ and $(F^{-1}\circ
T^{q})(\Gamma')<\Gamma'$;
  \item There exists an essential circle in $\mathbb{A}'$ that is free
for $t^{\,2q}$.
\end{enumerate}
We will get the lemma in the first case and contradictions in the
two other cases.
\bigskip

In the case (1), we have
\begin{equation}\label{ineq:Gamma'' and TqGamma''}
    T^{-q}(\Gamma')<F(\Gamma')<T^{q}(\Gamma').
\end{equation}
Applying $T^{q}$ to (\ref{ineq:Gamma'' and TqGamma''}), we get
$\Gamma'<F'(\Gamma')<T^{2q}(\Gamma')$. Thus we have
\begin{equation}\label{eq:Gamma' is essential}
    T^{-2qn}(\Gamma')<F'^{-n}(\Gamma')<\Gamma'<F'^n(\Gamma')<T^{2qn}(\Gamma')
\quad \mathrm{for\,\,every}\quad n\geq1.
\end{equation}

Observe that $\bigcup\limits_{n\geq1}R(F'^{-n}(\Gamma'))\cap
L(F'^n(\Gamma'))=\mathbb{R}^2$ since $F'$ is conjugate to a
translation and the line $\gamma'=\pi(\Gamma')$ is an essential line
in $\mathbb{A}'$. For any $P>0$, by the compactness of
$[-q,q]\times[-P,P]$, there exists a positive integer $N$ such that
$[-q,q]\times[-P,P]\subset R(T^{-2Nq}(\Gamma'))\cap
L(T^{2Nq}(\Gamma'))$. We deduces that
$[-q,+\infty[\times[-P,P]\subset R(T^{-2Nq}(\Gamma'))$ and
$]-\infty,q]\times[-P,P]\subset L(T^{2Nq}(\Gamma'))$, and hence
$[-q+2Nq,+\infty[\times[-P,P]\subset R(\Gamma')$ and
$]-\infty,q-2Nq[\times[-P,P]\subset L(\Gamma')$. This implies that
$\Gamma'$ is essential.


By Lemma \ref{lem:commute and topological line}, let $H_1=T$,
$H_2=T^{q}\circ F^{-1}$, $H_3=T^{q}\circ F$, $q_1=2q$ and
$q_2=q_3=1$, we can construct an essential line $\Gamma''$ which
satisfies

$$T^{-q}(\Gamma'')<F(\Gamma'')<T^{q}(\Gamma'')$$ and
$$\Gamma''< T(\Gamma'').$$
Hence, $\gamma''=\pi(\Gamma'')$ is an essential line of $\mathbb{A}$
that satisfies the conclusion of the lemma.
\bigskip

In the case (2), we will see how to get a contradiction.

The line $\Gamma'$ satisfies $\Gamma'<(F\circ T^q)(\Gamma')$ and
$\Gamma'<(F\circ T^{-q})(\Gamma')$. We define the set $X$ of couple
of integers $(m,n)$ such that $\Gamma'<(F^m\circ T^n)(\Gamma')$. It
is a set stable by addition that contains $(1,q)$ and $(1,-q)$. So
it contains all the integers $(m+n, q(m-n))$, where $m\geq 0$,
$n\geq 0$ and at least one is non zero. This is exactly the set of
couples $(m, nq)$ where $m>0$ and $\vert n\vert \leq m$. For every
couple $(m,n)$ of integers such that $m>0$ and $\vert
\frac{n}{m}\vert\leq q-\frac{2}{m}$, we have $qm-1>0$ and $\vert
n-1\vert\leq\vert n\vert+1\leq qm-1$. Therefore, we have
$q(m,n)-(1,q)\in X$, which means
\begin{equation}\label{ineq:Gamma'}
    \Gamma'<(F\circ T^q)(\Gamma')< (F^m\circ T^n)^{q}(\Gamma').
\end{equation}

Fix $z\in\mathrm{Rec}^+(f)$ having a rotation number $\rho$ and
$\widetilde{z}\in \pi^{-1}(z)$. Since $F'=F\circ T^q$ is conjugate
to a translation and the line $\gamma'$ is an essential line in
$\mathbb{A}'$, we can always suppose that $\widetilde{z}$ is
contained in the region $\mathrm{Cl}(R(\Gamma'))\cap L((F\circ
T^q)(\Gamma'))$ by replacing $\Gamma'$ with an iterate
$F'^k(\Gamma')$ if necessary. Consider the homeomorphism $f_q$ of
the annulus $\mathbb{A}_q=\mathbb{R}^2/T^q$ lifted by $F$ and write
$\pi_q:\mathbb{R}^2\rightarrow \mathbb{A}_q$ for the covering map.
We know that $\mathrm{Rec}^+(f)=\mathrm{Rec}^+(f^q)$ and we can
prove similarly that $\pi_q(\widetilde{z})\in \mathrm{Rec}^+(f_q^q)$
(we give a proof in the Appendix, see Lemma \ref{lem:rec(fqq)}). In
other words, there exist two sequences of integers $(n_i)_{i\geq1}$
and $(m_i)_{i\geq1}$ such that $m_i\rightarrow+\infty$ and
 $F^{qm_i}\circ T^{qn_i}(\widetilde{z})\rightarrow\widetilde{z}$
as $i\rightarrow+\infty$. Certainly, $\frac{qn_i}{qm_i}\rightarrow
-\rho$ as $i\rightarrow+\infty$. Therefore, there is a positive
integer $N$ such that when $i\geq N$, we have
$\vert\frac{n_i}{m_i}\vert<q-1<q-\frac{2}{m_i}$.

By the inequation (\ref{ineq:Gamma'}), we have $\Gamma'<(F\circ
T^q)(\Gamma')< (F^{m_i}\circ T^{n_i})^{q}(\Gamma')$ when $i\geq N$.
On one hand, the points of the sequence $\{(F^{m_i}\circ
T^{n_i})^{q}(\widetilde{z})\}_{i\geq N}$ belong to $R(F\circ
T^q(\Gamma'))$ and so their limit belongs to $\mathrm{Cl}(R(F\circ
T^q(\Gamma')))$. On the other hand, the limit of the sequence
belongs to $L(F\circ T^q(\Gamma'))$, which is a contradiction.
\bigskip

In the case (3), we will get a contradiction again.

By Lemma \ref{lem:Fq satisfies the intersection property}, we deduce
that there exists an essential circle $\gamma'$ free for $t$.
Therefore $\gamma'$ lifts to a Brouwer line $\Gamma'$ for $T$, in
particular, $\Gamma'\cap T(\Gamma')=\emptyset$.

The curve $\gamma'$ is closed in $\mathbb{A}'$, which implies that
its lift $\Gamma'$ satisfies
$$\Gamma'=F'(\Gamma')=(F\circ T^{q})(\Gamma').$$
Hence
$$T^{-q}(\Gamma')=F(\Gamma')\quad \mathrm{and}\quad F(\Gamma')\cap
T^{1-q}(\Gamma')=\emptyset.$$

Considering the map $F''=F\circ T^{q-1}$, we have the following:
\begin{itemize}
\item there exists a free line $\Gamma'$ of $\mathbb{R}^2$ for $F''$ and for $T$ (and hence for $T^k$ for every $k\neq0$) such that
$T^{-1}(\Gamma')=F''(\Gamma')$;
\item $\emptyset\neq\mathrm{Rot}(F'')\subset
[1,2q-3]$, in particular, $F''$ has no fixed point.
\end{itemize}
The first item above implies that there exists a segment
$\Gamma_0\subset \Gamma'$ such that $F''(\Gamma_0)\cap
T^{-1}(\Gamma_0)\not=\emptyset$ and that $\Gamma_0$ is free both for
$F''$ and for $T^k$ for any $k\in \mathbb{Z}\setminus\{0\}$.

The second item above implies that there exists $z\in\mathbb{A}$
such that $z\in \mathrm{Rec}^+(f)$ and $\rho(F'';z)\in [1,2q-3]$. We
claim that $z\notin \pi(\Gamma_0)$. Otherwise, choose a lift
$\widetilde{z}$ of $z$ in $\Gamma_0$. Let $U$ be an open disk
containing $\widetilde{z}$, located in the region between
$T^{-1}(\Gamma')$ and $T(\Gamma')$. There exist positive integers
$n\geq2$, $l\geq1$ such that $F''^n(\widetilde{z})\in T^l(U)$. As we
have $F''^n(\widetilde{z})\in F''^n(\Gamma_0)\subset
T^{-n}(\Gamma')$, we deduce that $T^l(U)\cap
T^{-n}(\Gamma')\neq\emptyset$ which is impossible. We have completed
the claim. As $z\notin \pi(\Gamma_0)$, we can find a disk $U'$ free
for $F''$, containing
 $\widetilde{z}$, disjoint from $\Gamma_0$, and satisfying
  $T^k(U')\cap U'=\emptyset$ for every $k\neq 0$ and $F''^n(U')\cap T^l(U')\neq \emptyset$.

By Lemma \ref{lem:Franks theorem 2.1}, $F''$ has a fixed point,
which is impossible.
\end{proof}
\smallskip

In the rest of this section, we give some remarks about the
relations between the positively recurrent set and the rotation
number set.

Let $f\in \mathrm{Homeo}_*(\mathbb{A})$ and $F$ be a lift of $f$ to
$\mathbb{R}^2$. Suppose that $z\in \mathrm{Rec}^+(f)$ and
$\widetilde{z}\in \pi^{-1}(z)$. We define $\mathcal
{E}(z)\subset\mathbb{R}\cup\{-\infty,+\infty\}$ by saying that
$\rho\in\mathcal {E}(z)$ if there exists a sequence
$\{n_k\}_{k=1}^{+\infty}\subset\mathbb{N}$ such that
\begin{eqnarray*}
   &\bullet& \lim_{k\rightarrow+\infty}f^{n_k}(z)=z;\qquad\qquad\qquad\qquad\qquad\qquad\qquad\qquad
   \qquad\qquad\qquad\qquad\qquad\qquad\quad \\
   &\bullet& \lim_{k\rightarrow+\infty}\frac{p_1(F^{n_k}(\widetilde{z}))-p_1(\widetilde{z})}{n_k}=\rho.
\end{eqnarray*}

Define $\rho^-(F;z)=\inf\mathcal
{E}(z)\quad\mathrm{and}\quad\rho^+(F;z)=\sup\mathcal {E}(z).$
Obviously, we have that $\rho(F;z)$ exists if and only if
$\rho^-(F;z)=\rho^+(F;z)\in\mathbb{R}$.\bigskip

The following proposition is due to Franks \cite{F} when
$\mathbb{A}$ is closed annulus and $f$ has no wandering point, and
it was improved by Le Calvez \cite{P1} when $\mathbb{A}$ is open
annulus and $f$ satisfies the intersection property. We can use
Lemma \ref{lem:Franks theorem 2.1} to prove it.

\begin{prop}\label{thm:FP}Let $f\in \mathrm{Homeo}_*^\wedge(\mathbb{A})$
 and $F$ be a lift of $f$ to $\mathbb{R}^2$. Suppose that there
exist two recurrent points $z_{1}$ and $z_{2}$ such that
$-\infty\leq\rho^-(F;z_{1})<\rho^+(F;z_{2})\leq+\infty$. Then for
any rational number $p/q\in ]\rho^-(F;z_{1}),\rho^+(F;z_{2})[$
written in an irreducible way, there exists a periodic point of
period $q$ whose rotation number is $p/q$.
\end{prop}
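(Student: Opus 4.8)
The plan is to reduce to a fixed-point statement for a suitable modified lift and then invoke Lemma \ref{lem:Franks theorem 2.1}. First I would fix an irreducible rational $p/q \in \,]\rho^-(F;z_1),\rho^+(F;z_2)[$ and replace $F$ by $G = F^q \circ T^{-p}$. Since $z_1,z_2 \in \mathrm{Rec}^+(f)$ and the property of being positively recurrent passes from $f$ to $f^q$, both points remain recurrent for $f^q$; one checks that $\rho^-(G;z_1) < 0 < \rho^+(G;z_2)$ using the elementary scaling and translation properties of rotation numbers recalled after the list on page~\pageref{prop:ROT} (together with the analogous identities for $\rho^\pm$, which follow directly from the definition of $\mathcal{E}(z)$). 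A periodic point of period $q$ for $f$ with rotation number $p/q$ for $F$ is exactly (modulo a divisor-of-$q$ subtlety handled at the end) a fixed point of $G$; so it suffices to show $G$ has a fixed point, and then check the period is actually $q$.

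Next I would produce the data required by Lemma \ref{lem:Franks theorem 2.1} for the map $G$. Because $\rho^-(G;z_1) < 0$, there is a sequence $n_k \to \infty$ with $f^{qn_k}(z_1) \to z_1$ and $\big(p_1(G^{n_k}(\widetilde{z_1})) - p_1(\widetilde{z_1})\big)/n_k$ converging to a negative limit, so for $k$ large $p_1(G^{n_k}(\widetilde{z_1})) < p_1(\widetilde{z_1})$; equivalently $G^{n_k}(\widetilde{z_1})$ and $T^{-p_1}(\widetilde{z_1})$ lie close for some integer $p_1>0$. Concretely: choose a small path-connected neighborhood (an embedded open disk) $X_1 \ni \widetilde{z_1}$, small enough that $G(X_1)\cap X_1 = \emptyset$ (possible since $G$ is fixed-point-free — wait, that is what we want to prove, so instead argue by contradiction: assume $G$ has no fixed point, hence any sufficiently small disk around any point is free) and $T^k(X_1)\cap X_1=\emptyset$ for $k\neq 0$; then recurrence of $\widetilde{z_1}$ under $G$ composed with powers of $T$ gives $G^{q_1}(X_1)\cap T^{-p_1}(X_1)\neq\emptyset$ for suitable positive $q_1,p_1$ — here I'd use that $\pi_q(\widetilde{z_1})$ is recurrent for the induced map on $\mathbb{R}^2/T^q$ in the same spirit as Lemma \ref{lem:rec(fqq)}. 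Symmetrically, $\rho^+(G;z_2) > 0$ yields a disk $X_2 \ni \widetilde{z_2}$ with $G^{q_2}(X_2)\cap T^{+p_2}(X_2)\neq\emptyset$. Shrinking the disks further, we can arrange condition (3) of Lemma \ref{lem:Franks theorem 2.1} ($X_1=X_2$ if $z_1=z_2$, otherwise the $T$-orbits are disjoint). Note the sign convention in Lemma \ref{lem:Franks theorem 2.1} is $T^{p_1}$ for $X_1$ and $T^{-p_2}$ for $X_2$; after possibly swapping the roles of $z_1,z_2$ (allowed since the hypothesis $\rho^-(F;z_1)<\rho^+(F;z_2)$ is used only to straddle $p/q$) the signs match. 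Lemma \ref{lem:Franks theorem 2.1} then gives a fixed point of $G$, contradicting the assumption; hence $G$ has a fixed point $w$.

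Finally I would upgrade the fixed point of $G=F^q\circ T^{-p}$ to a periodic point of $f$ of exact period $q$ with rotation number $p/q$. The point $z := \pi(w)$ satisfies $f^q(z)=z$, so it is periodic of period $d$ dividing $q$, and $\rho(F;z)$ is the rational number whose $q$-fold multiple is $p$, i.e.\ $\rho(F;z)=p/q$. If $d < q$, then $p/q = \rho(F;z) = (\text{integer}) \cdot \rho(F^{d};z)/\,$something has denominator dividing $d$, contradicting that $p/q$ is in lowest terms with denominator $q$; more precisely $q\rho(F;z)\in\mathbb{Z}$ and $d\rho(F;z)\in\mathbb{Z}$ force $\gcd(q,d)\rho(F;z)\in\mathbb{Z}$, hence $q\mid$ the numerator adjustment, forcing $d=q$. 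The main obstacle I anticipate is the careful bookkeeping in the second step: matching the sign conventions of Lemma \ref{lem:Franks theorem 2.1}, and justifying that the recurrence of $\widetilde{z_i}$ along a subsequence with the right rotation speed genuinely produces the required intersection $G^{q_i}(X_i)\cap T^{\mp p_i}(X_i)\neq\emptyset$ with $q_i,p_i$ \emph{positive} — this is where one must pass to the quotient annulus $\mathbb{R}^2/T^q$ and use the analogue of Lemma \ref{lem:rec(fqq)} rather than arguing naively in the universal cover.
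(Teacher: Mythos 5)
Your proposal is essentially the paper's own proof: pass to $F^q\circ T^{-p}$, surround lifts of $z_1$ and $z_2$ by small free disks whose $T$-translates are pairwise disjoint, use their recurrence together with the straddling of $p/q$ to produce the two opposite-direction intersections required in hypothesis (4), and conclude with Lemma \ref{lem:Franks theorem 2.1}; your extra care about the exact period $q$ (and the contradiction device used to get freeness of the disks) only makes explicit what the paper leaves implicit. The one hypothesis you should state is that $f^q$ again satisfies the intersection property (Lemma \ref{lem:Fq satisfies the intersection property}), since Lemma \ref{lem:Franks theorem 2.1} is applied here to a lift of $f^q$ rather than of $f$; the paper cites this explicitly.
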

\begin{proof}We consider the map $f^q$ and its lift $F'=F^q\circ
T^{-p}$. It is sufficient to prove that $F'$ has a fixed point.

By Lemma \ref{lem:Fq satisfies the intersection property}, $f^q$
satisfies the intersection property. By the properties of the
rotation number, we have $\rho^-(F';z_1)<0<\rho^+(F';z_2)$. For any
fixed point $z$ of $f^q$, we have $\rho^-(F';z)=\rho^+(F';z)\in
\mathbb{Z}$. Therefore, the points $z_1$ and $z_2$ are not fixed
points of $f^q$. So we can choose open disks $b_1$ and $b_2$,
containing respectively $z_1$ and $z_2$, equal if $z_1=z_2$,
disjoint if $z_1\neq z_2$, such that the connected components of
$\pi^{-1}(b_1)$ and $\pi^{-1}(b_2)$ are free for $F'$. Choose a
component $\widetilde{b}_1$ of $\pi^{-1}(b_1)$ and a component
$\widetilde{b}_2$ of $\pi^{-1}(b_2)$ (equal to $\widetilde{b}_1$ if
$z_1=z_2$). Observe that $\widetilde{b}_1$ and $\widetilde{b}_2$
satisfy all the hypotheses of Lemma \ref{lem:Franks theorem 2.1}.
Then, Proposition \ref{thm:FP} follows from Lemma \ref{lem:Franks
theorem 2.1}.
\end{proof}
From the Lemma \ref{lem:Franks theorem 2.1} and Proposition
\ref{thm:FP}, we have the following corollaries:
\begin{cor}\label{cor:rec not empty implies rot neq empty}
Let $f\in \mathrm{Homeo}_*^\wedge(\mathbb{A})$ and $F$ be a lift of
$f$ to $\mathbb{R}^2$. If $\mathrm{Rec}^+(f)\neq \emptyset$, then
$\mathrm{Rot}(F)\neq\emptyset$.
\end{cor}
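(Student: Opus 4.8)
The plan is to argue by contradiction, assuming $\mathrm{Rot}(F)=\emptyset$. First I would note that if $f$ had a periodic point $z_0$, then $z_0\in\mathrm{Rec}^+(f)$ and, as recalled in the introduction, $\rho(F;z_0)$ would exist, giving $\mathrm{Rot}(F)\neq\emptyset$; so $f$ has no periodic point, and therefore every lift $F^q\circ T^{-p}$ — in particular $F$ itself — is fixed point free. Then I would fix $z\in\mathrm{Rec}^+(f)$ (necessarily non-periodic) with a lift $\widetilde z$. Since $\mathcal{E}(z)\neq\emptyset$ we have $\rho^-(F;z)\le\rho^+(F;z)$, and if the inequality were strict, Proposition \ref{thm:FP} applied with $z_1=z_2=z$ would produce a periodic point with rotation number in $]\rho^-(F;z),\rho^+(F;z)[$, contradicting $\mathrm{Rot}(F)=\emptyset$. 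Hence $\rho^-(F;z)=\rho^+(F;z)$, a value that cannot be real (otherwise it would be a rotation number of $z$), so it equals $+\infty$ or $-\infty$; I would treat the case $\mathcal{E}(z)=\{+\infty\}$, the case $\{-\infty\}$ being symmetric after exchanging $T$ with $T^{-1}$.

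The core of the argument would then derive a contradiction with the intersection property out of $\mathcal{E}(z)=\{+\infty\}$. I would pick a small open disk $b\ni z$ with $\mathrm{Cl}(b)\cap f(\mathrm{Cl}(b))=\emptyset$, so that $b$ is free for $f$, together with a lift $\widetilde b\ni\widetilde z$ of $p_1$-diameter $<1$, so that the $T^k(\widetilde b)$ ($k\in\mathbb{Z}$) are pairwise disjoint. Choosing $n_k\to+\infty$ with $f^{n_k}(z)\to z$, the condition $\mathcal{E}(z)=\{+\infty\}$ forces $(p_1(F^{n_k}\widetilde z)-p_1(\widetilde z))/n_k\to+\infty$; for $k$ large $f^{n_k}(z)\in b$, so $F^{n_k}(\widetilde z)\in T^{l_k}(\widetilde b)$ for a unique integer $l_k$, and $l_k\to+\infty$. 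Fixing such a $k$ and writing $m=n_k$, $l=l_k\ge 1$, I would, exactly as in the proof of Lemma \ref{lem:Franks theorem 2.1}, choose a $T$-invariant maximal free brick decomposition $\mathfrak{B}$ of $\mathbb{R}^2$ with $\mathrm{Cl}(\widetilde b)$ contained in the interior of a single brick $B$. Picking bricks $B_i\ni F^i(\widetilde z)$ for $0\le i\le m$, one has $F^{i+1}(\widetilde z)\in F(B_i)\cap B_{i+1}$, hence $B_i\preceq B_{i+1}$; since $B_0=B$ and $B_m=T^l(B)$ (as $F^m(\widetilde z)\in T^l(\widetilde b)\subset\mathrm{Int}(T^l(B))$), this gives $B\preceq T^l(B)$, i.e. $T^l(B)\in B_{\succeq}$ with $l\ge 1$. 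By the property of $T$-invariant brick decompositions recalled above (a consequence of Proposition 2.7 in \cite{P1}), $\mathcal{U}=\bigcup_{k\in\mathbb{Z}}T^k(B_{\succeq})$ is then a closed connected $T$-invariant surface with $F(\mathcal{U})\subset\mathrm{Int}(\mathcal{U})$. If $\mathcal{U}\neq\mathbb{R}^2$, its projection $\mathcal{V}=\pi(\mathcal{U})$ is a proper closed connected sub-surface of $\mathbb{A}$ with $f(\mathcal{V})\subset\mathrm{Int}(\mathcal{V})$, and since $\mathcal{U}$ is path connected and contains $\widetilde z$ and $T^l(\widetilde z)$, a path between them projects to a loop of winding number $l\neq 0$, so $\mathcal{V}$ contains an essential circle. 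Just as at the end of the proof of Lemma \ref{lem:Franks theorem 2.1}, a connected component of $\partial\mathcal{V}$ is then an essential circle, free for $f$ because $f(\mathcal{V})\subset\mathrm{Int}(\mathcal{V})$, contradicting the intersection property.

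The hard part will be the degenerate possibility $\mathcal{U}=\mathbb{R}^2$ — and, symmetrically, the possibility that also $\bigcup_{k}T^k(B_{\preceq})=\mathbb{R}^2$, using $T^{-l}(B)\in B_{\preceq}$. If only one of these two regions fills the plane, the other produces a free essential circle by the argument just given. If both do, then the equivariant attractor and repeller of $F$ exhaust the plane, $F$ is conjugate to a translation, and no point of $\mathbb{A}$ can then be positively recurrent with lifted displacements tending to $+\infty$ — contradicting $\mathcal{E}(z)=\{+\infty\}$. These last verifications are routine brick-decomposition bookkeeping of the kind carried out in \cite{P1}, and I expect them, rather than the main line above, to be where the real care is needed.
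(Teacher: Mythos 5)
Your first paragraph (reduction via $\rho^{\pm}$, Proposition \ref{thm:FP}, and the observation that the remaining case is $\rho^-=\rho^+=\pm\infty$) matches the paper's proof. The gap is in your treatment of the case $\mathcal{E}(z)=\{+\infty\}$. Because you only extract the one-directional relation $B\preceq T^{l}(B)$ with $l\geq 1$, the sets $\bigcup_{k}T^{k}(B_{\succeq})$ and $\bigcup_{k}T^{k}(B_{\preceq})$ both contain $B$, so the disjointness of an essential attractor and an essential repeller, which is exactly what powers the end of the proof of Lemma \ref{lem:Franks theorem 2.1}, is not available to you. Consequently the step ``a connected component of $\partial\mathcal{V}$ is an essential circle'' is unjustified: a proper, closed, connected, essential subsurface $\mathcal{V}\subset\mathbb{A}$ with $f(\mathcal{V})\subset\mathrm{Int}(\mathcal{V})$ can perfectly well have only inessential boundary circles (think of the annulus minus a union of disjoint open disks), and a free inessential circle does not contradict the intersection property. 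The degenerate case is worse: from $\bigcup_{k}T^{k}(B_{\succeq})=\bigcup_{k}T^{k}(B_{\preceq})=\mathbb{R}^2$ it does not follow that $F$ is conjugate to a translation (the covering criterion of Section 2.2 concerns $F$-iterates of the strip between a Brouwer line and its image, not $T$-translates of $B_{\succeq}$), and even conjugacy to a translation would not contradict $\mathcal{E}(z)=\{+\infty\}$ by itself --- the lift of an irrational rotation is conjugate to a translation while every point downstairs is recurrent. So the part you defer as ``routine bookkeeping'' is precisely where the difficulty sits, and your sketch of it does not work.

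The paper avoids all of this with a much shorter argument that you should compare with. Since $\rho^-(F;z)=\rho^+(F;z)=+\infty$, the free disk $\widetilde b\ni\widetilde z$ admits returns $F^{q_2}(\widetilde b)\cap T^{p_2}(\widetilde b)\neq\emptyset$ and $F^{q_1}(\widetilde b)\cap T^{p_1}(\widetilde b)\neq\emptyset$ whose ratios satisfy $p_2/q_2<n<p_1/q_1$ for a suitable integer $n$ (fix one return first, choose $n$ larger than its ratio, then use $\rho^+=+\infty$ to get a second return with ratio $>n$). For the lift $F'=F\circ T^{-n}$ the disk $\widetilde b$ is still free and now has returns in both $T$-directions, so Lemma \ref{lem:Franks theorem 2.1} applied with $X_1=X_2=\widetilde b$ gives a fixed point of $F'$; hence $f$ has a fixed point and $\mathrm{Rot}(F)\neq\emptyset$ (no separate contradiction with the intersection property is needed --- it is already encapsulated in Lemma \ref{lem:Franks theorem 2.1}). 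If you want to keep your by-contradiction framing, this lift-shifting step is the missing idea; your brick construction, as it stands, cannot be completed along the lines you indicate.
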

\begin{proof}Let $z\in \mathrm{Rec}^+(f)$ and $\widetilde{z}$ be a lift of
$z$. If $z$ is a fixed point, the corollary is true. Otherwise,
choose a free disk $b$ for $f$ in $\mathbb{A}$ that contains $z$ and
a connected component $\widetilde{b}$ of $\pi^{-1}(b)$.

If $\rho^-(F;z)\neq\rho^+(F;z)$, then $\mathrm{Rot}(F)\neq\emptyset$
by Proposition \ref{thm:FP}. If $\rho^-(F;z)=\rho^+(F;z)\in
\mathbb{R}$, then $z$ has a rotation number and therefore
$\mathrm{Rot}(F)\neq\emptyset$. It remains to study the cases
$\rho^-(F;z)=\rho^+(F;z)=+\infty$ and
$\rho^-(F;z)=\rho^+(F;z)=-\infty$. They are similar. We will explain
now why the condition $\rho^-(F;z)=\rho^+(F;z)=+\infty$ implies that
$f$ has a fixed point and therefore that
$\mathrm{Rot}(F)\neq\emptyset$. We can find two rational numbers
$\frac{p_i}{q_i}$ $(i=1,2)$ and an integer $n$, arbitrarily large,
such that $F^{q_1}(\widetilde{b})\cap
T^{p_1}(\widetilde{b})\neq\emptyset$ and $F^{q_2}(\widetilde{b})\cap
T^{p_2}(\widetilde{b})\neq \emptyset$. Consider the lift $F'=F\circ
T^{-n}$ of $f$ to $\mathbb{R}^2$, we have
$F'^{q_1}(\widetilde{b})\cap
T^{p_1-q_1n}(\widetilde{b})\neq\emptyset$ and
$F'^{q_2}(\widetilde{b})\cap T^{p_2-q_2n}(\widetilde{b})\neq
\emptyset$. Recall that $\widetilde{b}$ is free for $F'$. By Lemma
\ref{lem:Franks theorem 2.1}, we have that $F'$ has a fixed point
and hence $f$ has a fixed point. We have completed the proof.
\end{proof}
\begin{cor}\label{cor:irrational}
Let $f\in \mathrm{Homeo}_*^\wedge(\mathbb{A})$ and $F$ a lift of $f$
to $\mathbb{R}^2$. The set $\mathrm{Rot}(F)$ is reduced to a single
irrational number $\rho$ if and only if
$\mathrm{Rec}^+(f)\neq\emptyset$ and $f$ has no periodic orbit.
\end{cor}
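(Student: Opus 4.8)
The plan is to prove the two implications separately; the forward one is formal and the substance lies in the converse. For the direct implication, suppose $\mathrm{Rot}(F)=\{\rho\}$ with $\rho$ irrational. Since a rotation number is attached only to a positively recurrent point, $\mathrm{Rot}(F)\neq\emptyset$ already gives $\mathrm{Rec}^+(f)\neq\emptyset$; and $f$ can have no periodic orbit, since a point of such an orbit would be positively recurrent with rational rotation number, forcing a rational number into $\mathrm{Rot}(F)=\{\rho\}$.

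For the converse, assume $\mathrm{Rec}^+(f)\neq\emptyset$ and that $f$ has no periodic orbit. By Corollary \ref{cor:rec not empty implies rot neq empty} we have $\mathrm{Rot}(F)\neq\emptyset$. Next, every $z\in\mathrm{Rec}^+(f)$ has a genuine rotation number: were $\rho^-(F;z)<\rho^+(F;z)$, Proposition \ref{thm:FP} applied with $z_1=z_2=z$ would produce a periodic point. Consequently, if $\mathrm{Rot}(F)$ contained two distinct values $\rho_1<\rho_2$, realized by recurrent points $z_1,z_2$, then $\rho^-(F;z_1)=\rho_1<\rho_2=\rho^+(F;z_2)$ and any rational number strictly between them would, by Proposition \ref{thm:FP} again, give a periodic point. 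Hence $\mathrm{Rot}(F)=\{\rho\}$ for a single $\rho$, and it remains to show $\rho$ is irrational.

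This is the core of the argument. Suppose $\rho=p/q$ with $q\geq1$ and $\gcd(p,q)=1$; set $g=f^q$ and $G=F^q\circ T^{-p}$. Then $g\in\mathrm{Homeo}_*^\wedge(\mathbb{A})$ by Lemma \ref{lem:Fq satisfies the intersection property}, $\mathrm{Rec}^+(g)=\mathrm{Rec}^+(f)\neq\emptyset$, $g$ has no periodic orbit, and by the elementary properties of rotation numbers $\mathrm{Rot}(G)=\{0\}$; in particular $G$ is fixed point free. The plan is to show that $G$ must nevertheless have a fixed point, which is a periodic point of $f$ and gives the desired contradiction. Fix $z\in\mathrm{Rec}^+(g)$ (not fixed by $g$), a lift $\widetilde{z}$, and a small embedded disk $\widetilde{b}\ni\widetilde{z}$ that is free for $G$ with $T^k(\widetilde{b})\cap\widetilde{b}=\emptyset$ for $k\neq0$. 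Positive recurrence of $z$ for $g$ together with $\rho(G;z)=0$ give sequences $N_k\to+\infty$ and integers $j_k$ with $j_k/N_k\to0$ and $G^{N_k}(\widetilde{b})\cap T^{j_k}(\widetilde{b})\neq\emptyset$ for large $k$. If $j_k=0$ for one such $k$ (then $N_k\geq2$, by freeness), $\widetilde{b}$ forms a periodic free disk chain and Proposition \ref{prop:Franks' Lemma} gives a fixed point of $G$. If $j_k>0$ for some $k$ and $j_{k'}<0$ for some $k'$, Lemma \ref{lem:Franks theorem 2.1} with $X_1=X_2=\widetilde{b}$, $(q_1,p_1)=(N_k,j_k)$, $(q_2,p_2)=(N_{k'},-j_{k'})$ gives a fixed point of $G$. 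Otherwise the $j_k$ are eventually of a single sign, say $j_k>0$ for all large $k$ (the other case being symmetric under $G\mapsto G^{-1}$); here I would choose, as in the proof of Lemma \ref{lem:Franks theorem 2.1}, a $T$-invariant maximal free brick decomposition $\mathfrak{B}$ of $\mathbb{R}^2$ for $G$ in which some brick $B$ contains $\widetilde{b}$. Then $G^{N_k}(B)\cap T^{j_k}(B)\neq\emptyset$ yields $B\preceq T^{j_k}(B)$ with $j_k\neq0$, so $T^{j_k}(B)\in B_{\succeq}$, and by the properties of $T$-invariant free decompositions recalled in Section 2 the set $\bigcup_{m\in\mathbb{Z}}T^m(B_{\succeq})$ is a closed connected surface with boundary whose image by $G$ lies in its interior and which is $T$-invariant, hence projects onto an essential subsurface of $\mathbb{A}$ admitting a boundary component that is an essential circle free for $g$ — contrary to the intersection property. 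In every case $G$ has a fixed point, so $\rho$ is irrational, completing the proof.

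The hard part is precisely this last step: excluding a rational rotation number. Proposition \ref{thm:FP} is of no use once $\mathrm{Rot}(F)$ is a single rational, since then all recurrent points share that rotation number and its hypothesis $\rho^-(F;z_1)<\rho^+(F;z_2)$ is never met; the delicate configuration is that of a recurrent point with rotation number $0$ for $G$ whose successive returns drift steadily to one side, and it is there that the intersection property, through the brick-decomposition machinery of Section 2 (used much as in the proofs of Corollary \ref{cor:rec not empty implies rot neq empty} and of case (3) of Lemma \ref{lem:bounded condition}), is indispensable.
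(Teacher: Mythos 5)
Your forward implication and the first half of the converse (single value of $\mathrm{Rot}(F)$ via Corollary \ref{cor:rec not empty implies rot neq empty} and Proposition \ref{thm:FP}) are fine, as are your Cases 1 and 2 for excluding a rational value: a return with zero shift gives a periodic free disk chain and Proposition \ref{prop:Franks' Lemma} applies, and returns with shifts of both signs feed directly into Lemma \ref{lem:Franks theorem 2.1}. (For comparison: the paper states this corollary without proof, merely asserting it follows from Lemma \ref{lem:Franks theorem 2.1} and Proposition \ref{thm:FP}, so there is no written argument to measure yours against.)

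The gap is in your Case 3, which is exactly the hard configuration you yourself single out. From $B\preceq T^{j_k}(B)$ with $j_k>0$ you do get that $E=\bigcup_{m\in\mathbb{Z}}T^m(B_{\succeq})$ is a closed, connected, $T$-invariant subsurface with $G(E)\subset\mathrm{Int}(E)$, but the jump to ``a boundary component of $\pi(E)$ is an essential circle free for $g$'' is unjustified. The boundary components of $E$ are free Brouwer-type lines, and nothing forces any of them to be $T$-invariant: they may all project to (possibly essential) \emph{lines} of $\mathbb{A}$, and a free essential line is perfectly compatible with the intersection property --- this is precisely the second alternative in Theorem \ref{thm:GS}; it is even conceivable that $E=\mathbb{R}^2$ and $\partial E=\emptyset$. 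In the paper's proof of Lemma \ref{lem:Franks theorem 2.1}, the free essential \emph{circle} is extracted only after showing that a $T$-saturated attracting set and a $T$-saturated repelling set, built from two bricks drifting in \emph{opposite} directions, are disjoint; disjointness is what confines one of them away from an end and forces a compact essential boundary component. With a single brick $B$ and one-sided drift this mechanism is unavailable: $\bigcup_m T^m(B_{\succeq})$ and $\bigcup_m T^m(B_{\preceq})$ both contain $B$, so they are never disjoint, and no analogue of the relation $B\prec B$ contradiction can be run. Thus the case of a recurrent point of rotation number $0$ for $G$ whose successive returns all drift to the same side (with $j_k/N_k\to 0$) is not eliminated by the tools you invoke, and this is the whole content of the irrationality assertion; settling it requires a genuinely different argument (for instance in the spirit of the prime-end compactification and Le Calvez--Yoccoz index computation used at the end of the proof of Proposition \ref{prop:more properties of the weak rotation number}), not a rerun of the brick argument of Lemma \ref{lem:Franks theorem 2.1}.
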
\smallskip

\section{Weak rotation number and the line translation theorem}
We have supposed previously that there exists a recurrent point. It
is natural to wonder if the main theorem is still true without this
hypothesis.

In this section, we define the weak rotation set of $f$ which is a
generalization of the rotation set defined in Section 1. It was
introduced in \cite{F2} in a more generalized framework (see also
\cite{Lerh} for a local study). We first give some properties of
weak rotation numbers, then we prove the generalization of the line
translation theorem.\smallskip

Fix $f\in\mathrm{Homeo}_*^\wedge(\mathbb{A})$ and a lift $F$. For
every compact set $K$ define the set
$\mathrm{Rot}_{\mathrm{weak},K}(F)$ in the following way :
$r\in[-\infty,+\infty]$ belongs to
$\mathrm{Rot}_{\mathrm{weak},K}(F)$ if there exists a sequence
$\{\widetilde{z}_k\}_{k\geq1}\subset \mathbb{R}^2$ and a sequence
$\{n_k\}_{k\geq1}$ of positive integers such that
\begin{eqnarray*}
   &\bullet& \pi(\widetilde{z}_k)\in K;\qquad\qquad\qquad\qquad\qquad\qquad\qquad\qquad
   \qquad\qquad\qquad\qquad\qquad\qquad\quad \\
   &\bullet& \pi(F^{n_k}(\widetilde{z}_k))\in K; \\
   &\bullet& \lim_{k\rightarrow+\infty}n_k=+\infty; \\
   &\bullet& \lim_{k\rightarrow+\infty}\frac{p_1(F^{n_k}(\widetilde{z}_k))-p_1(\widetilde{z}_k)}{n_k}=r.
\end{eqnarray*}
Define the weak rotation set of $F$ as
following$$\mathrm{Rot}_{\mathrm{weak}}(F)=\bigcup_{K\in
\mathrm{C}(\mathbb{A})}\mathrm{Rot}_{\mathrm{weak},K}(F),$$ where
$\mathrm{C}(\mathbb{A})$ is the collection of all compact subsets of
$\mathbb{A}$.
\begin{prop}\label{prop:properties of weak rotation number}If $f\in\mathrm{Homeo}_*^\wedge(\mathbb{A})$, we have
the following properties:
\begin{enumerate}
  \item $\mathrm{Rot}_{\mathrm{weak},K}(F)$ is closed;
  \item If K separates the two ends $N$ and $S$ of $\mathbb{A}$, then $\mathrm{Rot}_{\mathrm{weak},K}(F)\neq\emptyset$.
  Therefore $\mathrm{Rot}_{\mathrm{weak}}(F)\neq\emptyset$;
  \item $\mathrm{Rot}(F)\subset\mathrm{Rot}_{\mathrm{weak}}(F)$;\item For every $p\in\mathbb{Z}$
   and $q\in\mathbb{Z}$, we have $\mathrm{Rot}_{\mathrm{weak}}(F^q\circ T^p)
  =q\mathrm{Rot}_{\mathrm{weak}}(F)+p$.
\end{enumerate}
\end{prop}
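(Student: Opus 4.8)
The plan is to establish the four properties of $\mathrm{Rot}_{\mathrm{weak},K}(F)$ and $\mathrm{Rot}_{\mathrm{weak}}(F)$ in turn, each by an elementary argument; the only step with any subtlety is (2).

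\textbf{Property (1).} I would fix a compact set $K$ and show the complement of $\mathrm{Rot}_{\mathrm{weak},K}(F)$ in $[-\infty,+\infty]$ is open, or equivalently that the set is closed under taking limits of its elements. Given $r_j\to r$ with each $r_j\in\mathrm{Rot}_{\mathrm{weak},K}(F)$, I extract for each $j$ a pair $(\widetilde z_{k},n_{k})$ witnessing $r_j$ with $n_k$ large and the quotient within $1/j$ of $r_j$; a diagonal extraction produces a single sequence witnessing $r$. The case $r=\pm\infty$ is handled the same way, replacing ``within $1/j$'' by ``larger than $j$'' (resp. ``smaller than $-j$''). This is routine.

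\textbf{Property (2).} This is the main point. Suppose $K$ separates $N$ from $S$; I want a point of $\mathrm{Rot}_{\mathrm{weak},K}(F)$. Enlarging $K$ we may assume $K$ contains an essential circle $C$, hence separates the two ends. The plan is to argue by contradiction: if $\mathrm{Rot}_{\mathrm{weak},K}(F)=\emptyset$, then there is no sequence $(\widetilde z_k,n_k)$ with $\pi(\widetilde z_k),\pi(F^{n_k}(\widetilde z_k))\in K$ and $n_k\to\infty$ at all — in particular every point of $\pi^{-1}(K)$ that returns to $\pi^{-1}(K)$ does so within a bounded number of steps $n\le M$, and with bounded horizontal displacement. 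Then consider the essential circle $C\subset K$: since $f\in\mathrm{Homeo}_*^\wedge(\mathbb{A})$, $f^n(C)$ meets $C$ for every $n\ge1$ by the intersection property (applied to $f^n$, using Lemma \ref{lem:Fq satisfies the intersection property}). So for each $n$ there is a point of $C$ whose $n$-th iterate lies on $C\subset K$; taking $n\to\infty$ along this family and passing to a subsequence so the base points converge and the normalized horizontal displacements converge in $[-\infty,+\infty]$ (compactness of $K$ and of $[-\infty,+\infty]$), we obtain exactly a witnessing sequence for some $r$, contradicting emptiness. The emptiness of $\mathrm{Rot}_{\mathrm{weak},K}(F)$ for a fixed such $K$ thus fails, giving the claim; since such $K$ exists, $\mathrm{Rot}_{\mathrm{weak}}(F)\neq\emptyset$.

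\textbf{Properties (3) and (4).} For (3), given $\rho\in\mathrm{Rot}(F)$ coming from $z\in\mathrm{Rec}^+(f)$ with subsequence $f^{n_k}(z)\to z$, the constant sequence $\widetilde z_k=\widetilde z$ together with these $n_k$ and the compact set $K=\{z\}$ (or any compact neighbourhood) witnesses $\rho\in\mathrm{Rot}_{\mathrm{weak},K}(F)$; one only needs $n_k\to\infty$, which holds after discarding repeats since $z$ is recurrent but, if not periodic, returns arbitrarily late, and if periodic the rotation number is still captured by $n_k=kq\to\infty$. For (4), I use that $F^q\circ T^p$ applied $n$ times equals $F^{qn}\circ T^{pn}$, so $p_1((F^q\circ T^p)^n(\widetilde z))-p_1(\widetilde z)=\bigl(p_1(F^{qn}(\widetilde z))-p_1(\widetilde z)\bigr)+pn$; dividing by $n$ and using that $qn/n=q$, a sequence witnessing $r$ for $F$ through times $n_k$ (and hence through $qn_k$, by recurrence-free bookkeeping) yields $qr+p$ for $F^q\circ T^p$ through times $n_k$, and conversely. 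The inclusion in both directions gives equality. I expect Property (2) to be the only place requiring real care, precisely in extracting the limiting data from the intersection property while keeping the base points in the compact set $K$.
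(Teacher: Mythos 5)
Your items (1) and (3) are essentially the paper's arguments (the paper phrases (1) as $\mathrm{Rot}_{\mathrm{weak},K}(F)=\bigcap_{n\geq1}\mathrm{Cl}(\bigcup_{k\geq n}R_k)$, which is your diagonal extraction), but there are two genuine gaps. First, in (2) the reduction ``enlarging $K$ we may assume $K$ contains an essential circle'' does not prove the stated claim: if $K\subset K'$ then $\mathrm{Rot}_{\mathrm{weak},K}(F)\subset\mathrm{Rot}_{\mathrm{weak},K'}(F)$, so nonemptiness for the enlarged set says nothing about the given $K$; moreover a compact set separating the two ends need not contain any essential circle at all (a Warsaw-circle type or pseudo-circle separator contains no simple closed curve), so you cannot in general find your circle $C$ inside $K$. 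Your argument thus only yields the last sentence of (2), and the first sentence only when $K$ happens to contain an essential circle. The paper's fix is exactly the missing idea: it suffices to show $K\cap f^q(K)\neq\emptyset$ for every $q\geq1$ (then take $z_q\in K$ with $f^q(z_q)\in K$ and extract a limit ratio in the compact space $[-\infty,+\infty]$, as you do); if $K\cap f^q(K)=\emptyset$, compactness gives an open neighbourhood $U$ of $K$ with $U\cap f^q(U)=\emptyset$, and since $K$ separates $N$ from $S$ this \emph{open} set $U$ does contain an essential circle, which is then free for $f^q$, contradicting Lemma \ref{lem:Fq satisfies the intersection property}. The essential circle must be sought in a neighbourhood of $K$, not in $K$ itself.

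Second, in (4) the inclusion $q\,\mathrm{Rot}_{\mathrm{weak}}(F)+p\subset\mathrm{Rot}_{\mathrm{weak}}(F^q\circ T^p)$ is not ``recurrence-free bookkeeping''. A sequence witnessing $r$ for $F$ comes with times $n_k$ that need not be multiples of $q$, and you have no control on $\pi(F^{qn_k}(\widetilde{z}_k))$, so you cannot simply rerun the same points through times $qn_k$. The paper handles this by writing $n_k=l_kq+p_k$ with $0\leq p_k<q$, pigeonholing a fixed residue $p_k=p$ along a subsequence, and replacing the base points by $F^{p}(\widetilde{z}_{k_j})$: the times $ql_{k_j}$ are then multiples of $q$, the new base points project into the compact set $f^{p}(K)$, the endpoints still project into $K$, and the bounded shifts of base point and of time do not change the limit, so $qr$ lies in $\mathrm{Rot}_{\mathrm{weak},K\cup f^{p}(K)}(F^q)$. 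Since $q$ ranges over all of $\mathbb{Z}$, you also need the case $q<0$, which the paper gets from the identity $\mathrm{Rot}_{\mathrm{weak},K}(F^{-1})=-\mathrm{Rot}_{\mathrm{weak},K}(F)$ (swap the roles of $\widetilde{z}_k$ and $F^{n_k}(\widetilde{z}_k)$). Finally, a small slip in (3): $K=\{z\}$ does not work for a non-periodic recurrent point, since the returns $f^{n_k}(z)$ are only close to $z$, not equal to it; keep your parenthetical compact neighbourhood, as the paper does with a closed disk containing $z$ in its interior.
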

\begin{proof}
(1) For every $n\geq1$ define a set $R_n\subset \mathbb{R}$ as
$$R_n=\left\{\frac{p_1\circ
F^n(\widetilde{z})-p_1(\widetilde{z})}{n}\,\Big\vert\,\widetilde{z}\in\pi^{-1}(K)\quad
\mathrm{and\quad}F^n(\widetilde{z})\in\pi^{-1}(K)\right\}.$$ Observe
now that
$$\mathrm{Rot}_{\mathrm{weak},K}(F)=\bigcap_{n\geq1}\mathrm{Cl}(\bigcup_{k\geq n}R_k),$$
so $\mathrm{Rot}_{\mathrm{weak},K}(F)$ is closed.\bigskip

(2) To prove that $\mathrm{Rot}_{\mathrm{weak},K}(F)$ is not empty,
it is sufficient to prove that for every $q\geq1$, there is a point
$z_q\in K$ such that $f^q(z_q)\in K$. Suppose that there exists
$q\geq1$ such that $K\cap f^q(K)=\emptyset$. As $K$ is compact, we
can find an open neighborhood $U$ of $K$ such that $U\cap
f^q(U)=\emptyset$. Since $K$ separates the two ends of $\mathbb{A}$,
we deduce that $U$ contains an essential circle. This contradicts
Lemma \ref{lem:Fq satisfies the intersection property}. \bigskip

(3) For every $r\in \mathrm{Rot}(F)$, there exists $z\in
\mathrm{Rec}^+(f)$ such that $\rho(F;z)=r$, we choose $K$ to be a
closed disk whose interior contains $z$. By definitions, we have
$r=\rho(F;z)\in\mathrm{Rot}_{\mathrm{weak},K}(F)\subset
\mathrm{Rot}_{\mathrm{weak}}(F)$. Therefore,
$\mathrm{Rot}(F)\subset\mathrm{Rot}_{\mathrm{weak}}(F)$. \bigskip

(4) Since $F\circ T=T\circ F$, we clearly have
$\mathrm{Rot}_{\mathrm{weak}}(F\circ
T^p)=\mathrm{Rot}_{\mathrm{weak}}(F)+p$. Let us prove now that
$\mathrm{Rot}_{\mathrm{weak}}(F^q)=q\mathrm{Rot}_{\mathrm{weak}}(F)$
for every $q\in \mathbb{Z}$. When $q=0$, it is trivial. First, fix
$q>0$. Obviously, $\mathrm{Rot}_{\mathrm{weak},K}(F^q)\subset
q\mathrm{Rot}_{\mathrm{weak},K}(F)$ for any $K\in
\mathrm{C}(\mathbb{A})$ by definition. Therefore,
$\mathrm{Rot}_{\mathrm{weak}}(F^q)\subset
q\mathrm{Rot}_{\mathrm{weak}}(F)$.

Now, we prove the inverse. For every $r\in
\mathrm{Rot}_{\mathrm{weak}}(F)$, there exists $K\in
\mathrm{C}(\mathbb{A})$ such that $r\in
\mathrm{Rot}_{\mathrm{weak},K}(F)$. Therefore, there exists a
sequence $\{\widetilde{z}_k\}_{k\geq1}\subset \mathbb{R}^2$ and a
sequence $\{n_k\}_{k\geq1}$ of positive integers such that
\begin{eqnarray*}
   &\bullet& \pi(\widetilde{z}_k)\in K;\qquad\qquad\qquad\qquad\qquad\qquad\qquad\qquad
   \qquad\qquad\qquad\qquad\qquad\qquad\quad \\
   &\bullet& \pi(F^{n_k}(\widetilde{z}_k))\in K; \\
   &\bullet& \lim_{k\rightarrow+\infty}n_k=+\infty; \\
   &\bullet& \lim_{k\rightarrow+\infty}\frac{p_1(F^{n_k}(\widetilde{z}_k))-p_1(\widetilde{z}_k)}{n_k}=r.
\end{eqnarray*}
Write $n_k=l_kq+p_k$ where $0\leq p_k <q$. Suppose that there are
infinitely many $k$ such that $p_k=p$ where $0\leq p<q$. We can
choose a subsequence $\{n_{k_j}\}_{j\geq1}$ of $\{n_k\}_{k\geq1}$
such that $n_{k_j}=l_{k_j}q+p$ and
$$\lim_{j\rightarrow+\infty}\frac{p_1(F^{ql_{k_j}}(F^p(\widetilde{z}_{k_j})))-p_1(F^p(\widetilde{z}_{k_j}))}{ql_{k_j}}=r.$$
Then we have
$$qr=\lim\limits_{j\rightarrow+\infty}\frac{p_1(F^{ql_{k_j}}(F^p(\widetilde{z}_{k_j})))-p_1(F^p(\widetilde{z}_{k_j}))}{l_{k_j}}\in
\mathrm{Rot}_{\mathrm{weak},f^p(K)}(F^q).$$ Hence
$q\mathrm{Rot}_{\mathrm{weak}}(F)\subset\mathrm{Rot}_{\mathrm{weak}}(F^q)$.\smallskip

For the case $q<0$, it is sufficient to prove that
$\mathrm{Rot}_{\mathrm{weak},K}(F^{-1})=-\mathrm{Rot}_{\mathrm{weak},K}(F)$
for every $K\in\mathrm{C}(\mathbb{A})$. For every
$r\in\mathrm{Rot}_{\mathrm{weak},K}(F)$, by letting
$\widetilde{z}\,'_k=F^{n_k}(\widetilde{z}_k)$ in the definition of
$\mathrm{Rot}_{\mathrm{weak},K}(F)$, we have
\begin{eqnarray*}
   &\bullet& \pi(\widetilde{z}\,'_k)\in K;\qquad\qquad\qquad\qquad\qquad\qquad\qquad\qquad
   \qquad\qquad\qquad\qquad\qquad\qquad\qquad \\
   &\bullet& \pi(F^{-n_k}(\widetilde{z}\,'_k))\in K; \\
   &\bullet& \lim_{k\rightarrow+\infty}n_k=+\infty; \\
   &\bullet& \lim_{k\rightarrow+\infty}\frac{p_1(F^{-n_k}(\widetilde{z}\,'_k))-p_1(\widetilde{z}\,'_k)}{n_k}=-r.
\end{eqnarray*}
Therefore,
\begin{equation}\label{eq:minus weak rotation set}-\mathrm{Rot}_{\mathrm{weak},K}(F)\subset
\mathrm{Rot}_{\mathrm{weak},K}(F^{-1}).\end{equation} By replacing
$F$ by $F^{-1}$ in the conclusion (\ref{eq:minus weak rotation
set}), we have $\mathrm{Rot}_{\mathrm{weak},K}(F^{-1})\subset
-\mathrm{Rot}_{\mathrm{weak},K}(F)$. We have completed the proof.
\end{proof}

\begin{lem}\label{lem:gamma < f'gamma, then [0,+infty]}
Suppose that there exists a compact set $K\subset \mathbb{A}$ and an
orientated line $\Gamma$ of $\mathbb{R}^2$ satisfying
\begin{enumerate}
\item $\Gamma<T(\Gamma)$;
\item $\Gamma<F(\Gamma)$ (resp. $F(\Gamma)<\Gamma$);
\item $\pi^{-1}(K)\subset \bigcup_{k\in\mathbb{Z}}T^k(\mathrm{Cl}(R(\Gamma))\cap
L(T(\Gamma)))$.
\end{enumerate}
Then $\mathrm{Rot}_{\mathrm{weak},K}(F)\subset[0,+\infty]$ (resp.
$\mathrm{Rot}_{\mathrm{weak},K}(F)\subset[-\infty,0]$). As a
consequence, if $\Gamma$ is a lift of an oriented essential line in
$\mathbb{A}$ joining $S$ to $N$, satisfying $\Gamma<F(\Gamma)$
(resp. $F(\Gamma)<\Gamma$), then
$\mathrm{Rot}_{\mathrm{weak}}(F)\subset[0,+\infty]$ (resp.
$\mathrm{Rot}_{\mathrm{weak}}(F)\subset[-\infty,0]$).
\end{lem}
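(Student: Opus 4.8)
The plan is to prove the first assertion (about $\mathrm{Rot}_{\mathrm{weak},K}(F)$) directly from the hypotheses, and then deduce the ``consequence'' by checking that an essential line joining $S$ to $N$ which satisfies $\Gamma<F(\Gamma)$ can be used to produce, for \emph{any} compact $K\subset\mathbb{A}$, a compact set and a line satisfying conditions (1)--(3). Throughout I treat the case $\Gamma<F(\Gamma)$; the other case follows by replacing $F$ with $F^{-1}$ and applying Proposition \ref{prop:properties of weak rotation number}(4), which gives $\mathrm{Rot}_{\mathrm{weak},K}(F^{-1})=-\mathrm{Rot}_{\mathrm{weak},K}(F)$.

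First I would record the basic monotonicity consequences of (1) and (2): since $F$ and $T$ commute and preserve orientation, $\Gamma<T(\Gamma)$ gives $T^m(\Gamma)\le T^{m'}(\Gamma)$ for $m\le m'$ (strictly, with ``$<$'', when $m<m'$), and $\Gamma<F(\Gamma)$ gives $F^n(\Gamma)<F^{n+1}(\Gamma)$, hence $\Gamma\le F^n(\Gamma)$ for all $n\ge 0$. Combining these, $T^m(\Gamma)< F^n(\Gamma)$ whenever $m<0\le n$, and more usefully: if $\widetilde z$ lies in $\mathrm{Cl}(R(T^a(\Gamma)))$ then $F^n(\widetilde z)\in\mathrm{Cl}(R(T^a(\Gamma)))$ for all $n\ge 0$, because $F^n(\mathrm{Cl}(R(T^a\Gamma)))=\mathrm{Cl}(R(T^aF^n\Gamma))\subset \mathrm{Cl}(R(T^a\Gamma))$. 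This ``forward invariance of right half-planes'' is the engine of the estimate.

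Next I would extract the quantitative bound. Take $r\in\mathrm{Rot}_{\mathrm{weak},K}(F)$ with witnessing sequences $\widetilde z_k$ and $n_k\to+\infty$, with $\pi(\widetilde z_k)\in K$ and $\pi(F^{n_k}(\widetilde z_k))\in K$. By hypothesis (3), there are integers $a_k$ with $\widetilde z_k\in T^{a_k}\big(\mathrm{Cl}(R(\Gamma))\cap L(T(\Gamma))\big)$, i.e. $\widetilde z_k\in\mathrm{Cl}(R(T^{a_k}\Gamma))\cap L(T^{a_k+1}\Gamma)$; similarly $F^{n_k}(\widetilde z_k)\in\mathrm{Cl}(R(T^{b_k}\Gamma))\cap L(T^{b_k+1}\Gamma)$ for some $b_k$. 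From the forward invariance above applied to $\widetilde z_k\in\mathrm{Cl}(R(T^{a_k}\Gamma))$ we get $F^{n_k}(\widetilde z_k)\in\mathrm{Cl}(R(T^{a_k}\Gamma))$, which combined with $F^{n_k}(\widetilde z_k)\in L(T^{b_k+1}\Gamma)$ forces $T^{a_k}(\Gamma)$ and $T^{b_k+1}(\Gamma)$ to be nested so that $a_k\le b_k+1$, i.e. $b_k\ge a_k-1$. Now I would bound $p_1(F^{n_k}(\widetilde z_k))-p_1(\widetilde z_k)$ from below in terms of $b_k-a_k$: each fundamental domain $\mathrm{Cl}(R(\Gamma))\cap L(T(\Gamma))$ has compact closure modulo $T$, so its image in $\mathbb{R}^2$ has $p_1$-oscillation bounded by a constant $C$ independent of $k$ (here one uses that $\Gamma$ projects to an essential line or, in the general statement, that $\mathrm{Cl}(R(\Gamma))\cap L(T(\Gamma))$ is a fundamental domain for $T$ with the stated compactness). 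Thus $p_1(\widetilde z_k)\le a_k+C$ and $p_1(F^{n_k}(\widetilde z_k))\ge b_k-C\ge a_k-1-C$, giving
\[
p_1(F^{n_k}(\widetilde z_k))-p_1(\widetilde z_k)\ \ge\ -(2C+1).
\]
Dividing by $n_k$ and letting $k\to+\infty$ yields $r\ge 0$, so $\mathrm{Rot}_{\mathrm{weak},K}(F)\subset[0,+\infty]$.

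Finally, for the ``as a consequence'' clause: if $\Gamma$ is a lift of an oriented essential line in $\mathbb{A}$ joining $S$ to $N$ with $\Gamma<F(\Gamma)$, then $\Gamma<T(\Gamma)$ holds automatically (an essential line and its nontrivial $T$-translate are disjoint and correctly nested), and the translates $(T^k(\mathrm{Cl}(R(\Gamma))\cap L(T\Gamma)))_{k\in\mathbb{Z}}$ tile $\mathbb{R}^2$, so condition (3) holds for \emph{every} compact $K$. Applying the first part to each $K\in\mathrm{C}(\mathbb{A})$ and taking the union over $K$ gives $\mathrm{Rot}_{\mathrm{weak}}(F)\subset[0,+\infty]$. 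The main obstacle I anticipate is the bookkeeping in the second step — making the comparison $b_k\ge a_k-1$ rigorous from the nesting of the lines $T^{a_k}(\Gamma)$, and pinning down the uniform constant $C$ controlling the horizontal oscillation of a fundamental domain; the rest is formal manipulation of the order relations from Remark \ref{rem:v} and the commutation of $F$ and $T$.
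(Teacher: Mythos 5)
Your overall strategy coincides with the paper's (the paper argues by contradiction, but with the same two ingredients: forward invariance of $\mathrm{Cl}(R(\Gamma))$ under $F$ coming from $\Gamma<F(\Gamma)$, and a uniform bound on the horizontal spread of the relevant points); your nesting step $b_k\ge a_k-1$, the reduction of the second case to the first via $F^{-1}$, and the tiling argument for the ``consequence'' are all fine. The genuine gap is in your constant $C$: it is not true that $\mathrm{Cl}(R(\Gamma))\cap L(T(\Gamma))$ ``has compact closure modulo $T$'' or bounded $p_1$-oscillation. In the lemma $\Gamma$ is an arbitrary oriented line satisfying $\Gamma<T(\Gamma)$, and even in the essential case it can be a lift of a line spiralling in $\mathbb{A}$ (e.g.\ the diagonal $y=x$), so the strip between $\Gamma$ and $T(\Gamma)$ may have infinite horizontal extent, and its projection to $\mathbb{A}$ is a closed set running out to both ends, never compact. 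Hence the inequalities $p_1(\widetilde z_k)\le a_k+C$ and $p_1(F^{n_k}(\widetilde z_k))\ge b_k-C$ are unjustified as written; this is exactly the point you flagged as ``pinning down the uniform constant $C$'', and it is the only non-formal step of the lemma.

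What is true, and what the paper spends the first half of its proof establishing, is that only the part of the fundamental domain lying over $K$, namely $\widetilde K=\pi^{-1}(K)\cap\mathrm{Cl}(R(\Gamma))\cap L(T(\Gamma))$, has finite width $\sup\{p_1(\widetilde z_1)-p_1(\widetilde z_2)\}$. The paper proves this using the compactness of $K$ together with hypotheses (1) and (3): setting $\widetilde U_k=\left(R(T^{-1}(\Gamma))\cap L(T(\Gamma))\right)\cap\{(x,y)\mid -k<x<k\}$, the sets $\pi(\widetilde U_k)$ increase and cover $K$, so $K\subset\pi(\widetilde U_N)$ for some $N$; since $T^l\left(R(T^{-1}(\Gamma))\cap L(T(\Gamma))\right)$ meets $\mathrm{Cl}(R(\Gamma))\cap L(T(\Gamma))$ only for $l\in\{0,1\}$, one gets $\widetilde K\subset\widetilde U_N\cup T(\widetilde U_N)$, whence the finite width. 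With this in hand your computation is easily repaired: because $\pi(\widetilde z_k)\in K$ and $\pi(F^{n_k}(\widetilde z_k))\in K$, you have $\widetilde z_k\in T^{a_k}(\widetilde K)$ and $F^{n_k}(\widetilde z_k)\in T^{b_k}(\widetilde K)$, so $b_k\ge a_k-1$ bounds the displacement below by $-(\mathrm{width}(\widetilde K)+1)$, and dividing by $n_k$ gives $r\ge 0$. So replace the false global oscillation bound by this compactness argument over $K$ — this is precisely the role of hypothesis (3) in the statement.
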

\begin{proof}
We will make a proof by contradiction, we suppose that
$\Gamma<T(\Gamma)$ and that there exists a negative number $\rho$
such that $\rho\in \mathrm{Rot}_{\mathrm{weak},K}(F)$. Write
$$\widetilde{K}=\pi^{-1}(K)\cap \mathrm{Cl}(R(\Gamma))\cap
L(T(\Gamma))$$ and
$$\mathrm{width}(\widetilde{K})=\sup_{\widetilde{z}_1,\widetilde{z}_2\in
\widetilde{K}}\{p_1(\widetilde{z}_1)-p_1(\widetilde{z}_2)\}.$$

We first claim that $\mathrm{width}(\widetilde{K})$ is finite.
According to the hypothesis (3), we have $K=\pi(\widetilde{K})$. For
every $k\geq1$, we define an open set
$$\widetilde{U}_k=\left(R(T^{-1}(\Gamma))\cap L(T(\Gamma))\right)\cap\{(x,y)\in\mathbb{R}^2\mid -k<x<k\}.$$
By the hypothesis (1), the sequence of open sets
$\{\pi(\widetilde{U}_k)\}_{k\geq1}$ in $\mathbb{A}$ is increasing.
Since $\mathrm{Cl}(R(\Gamma))\cap L(T(\Gamma))\subset
R(T^{-1}(\Gamma))\cap L(T(\Gamma))$, we have
$\widetilde{K}\cap\{(x,y)\in\mathbb{R}^2\mid -k<x<k\}\subset
\widetilde{U}_k$. Obviously, $\widetilde{K}\subset
\bigcup_{k\geq1}\widetilde{U}_k$ and $K\subset
\bigcup_{k\geq1}\pi(\widetilde{U}_k)$. As $K$ is compact, there is a
positive integer $N$ such that $K\subset\pi(\widetilde{U}_N)$. It
implies that, for every $\widetilde{z}\in\widetilde{K}$, there
exists $k\in\mathbb{Z}$ such that
$T^k(\widetilde{z})\in\widetilde{U}_N$. Observe that
$$T^l(R(T^{-1}(\Gamma))\cap L(T(\Gamma)))\cap(\mathrm{Cl}(R(\Gamma))\cap L(T(\Gamma)))
=\emptyset\quad \mathrm{if}\quad l\in\mathbb{Z}\setminus\{0,1\}.$$
It implies that the only possibilities for $k$ are $-1$ and $0$. So
$\widetilde{K}\subset \widetilde{U}_N\cup T(\widetilde{U}_N)$. It
completes the claim.
\smallskip

By the definition of $\mathrm{Rot}_{\mathrm{weak},K}(F)$, there is a
sequence $\{\widetilde{z}_{k}\}_{k\geq1}\subset \widetilde{K}$ and a
sequence $\{n_{k}\}_{k\geq1}$ of positive integers such that
\begin{eqnarray*}
   &\bullet& \lim_{k\rightarrow+\infty}n_k=+\infty; \\
   &\bullet& \{\pi(F^{n_{k}}(\widetilde{z}_{k}))\}_{k\geq1}\subset K; \qquad\qquad\qquad\qquad\qquad\qquad\qquad
   \qquad\qquad\qquad\qquad\qquad\\
   &\bullet&
\lim_{k\rightarrow+\infty}\frac{p_1(F^{n_k}(\widetilde{z}_k))-p_1(\widetilde{z}_k)}{n_k}=\rho.
\end{eqnarray*}

If $k$ is large enough, then
$$p_1(F^{n_{k}}(\widetilde{z}_{k}))-p_1(\widetilde{z}_{k})<-2\mathrm{width}(\widetilde{K}).$$ It implies that  $F^{n_{k}}(\widetilde{z}_{k})$ is
on the left of $\Gamma$. But, by the hypothesis (2), for every
$n\geq1$ and $\widetilde{z}\in \widetilde{K}$, $F^n(\widetilde{z})$
is on the right of $\Gamma$, we get a contradiction. \smallskip

Let us prove now the second statement of the lemma.  The line
$\Gamma$ is a lift of an oriented essential line in $\mathbb{A}$
joining $S$ to $N$. Observe that
$\bigcup_{k\in\mathbb{Z}}T^k(\mathrm{Cl}(R(\Gamma))\cap
L(T(\Gamma)))=\mathbb{R}^2$, then we get the last consequence.
\end{proof}
Using Lemma \ref{lem:gamma < f'gamma, then [0,+infty]}, we can get
more properties about the weak rotation number set of $F$.

\begin{prop}\label{prop:more properties of the weak rotation number}We have the following properties:
\begin{enumerate}
 \item If $\frac{p}{q}$ is given and if
$\mathrm{Rot}_{\mathrm{weak},K}(F)$ contains $r_1$ and $r_2$ where
$r_1 < \frac{p}{q}<r_2$, then there exists $\widetilde{z}$ such that
$F^q(\widetilde{z})=T^p(\widetilde{z})$.
  \item The sets
$\mathrm{Cl}(\mathrm{Rot}_{\mathrm{weak}}(F))$ and
$\mathrm{Cl}(\mathrm{Rot}(F))$ are the same intervals except in the
case where $\mathrm{Rec}^+(f)=\emptyset$ and
$\mathrm{Rot}_{\mathrm{weak}}(F)$ is reduced to an element of
$\mathbb{R}$.
 \end{enumerate}

\end{prop}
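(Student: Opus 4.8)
The plan is to establish part (1) first, as a Franks-type fixed-point statement, and then to read off part (2) by soft arguments. For (1), I would reduce the problem to producing a fixed point of the lift $G:=F^{q}\circ T^{-p}$ of $f^{q}$, since a fixed point $\widetilde{z}$ of $G$ is precisely a point with $F^{q}(\widetilde{z})=T^{p}(\widetilde{z})$. By Proposition \ref{prop:properties of weak rotation number}(4), $\mathrm{Rot}_{\mathrm{weak}}(G)=q\,\mathrm{Rot}_{\mathrm{weak}}(F)-p$, and since $r_{1},r_{2}\in\mathrm{Rot}_{\mathrm{weak},K}(F)\subset\mathrm{Rot}_{\mathrm{weak}}(F)$ with $r_{1}<p/q<r_{2}$, the (extended) reals $qr_{1}-p$ and $qr_{2}-p$ belong to $\mathrm{Rot}_{\mathrm{weak}}(G)$, the first being $<0$ and the second $>0$ (with the conventions $q(\pm\infty)-p=\pm\infty$). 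Now suppose, for contradiction, that $G$ has no fixed point. By Lemma \ref{lem:Fq satisfies the intersection property} we have $f^{q}\in\mathrm{Homeo}_*^\wedge(\mathbb{A})$, so Theorem \ref{thm:GS} applied to $G$ produces either an essential circle of $\mathbb{A}$ free for $f^{q}$ — impossible by the intersection property — or an essential line $\gamma$ of $\mathbb{A}$, joining $S$ to $N$, which lifts to a Brouwer line $\Gamma$ of $G$. Orienting $\gamma$ from $S$ to $N$, and using that a Brouwer line is disjoint from its image while $\Gamma$ and $G(\Gamma)$ are both essential lines of $\mathbb{R}^{2}$, we get $\Gamma<G(\Gamma)$ or $G(\Gamma)<\Gamma$; the final assertion of Lemma \ref{lem:gamma < f'gamma, then [0,+infty]} (applied with $F$ replaced by $G$) then forces $\mathrm{Rot}_{\mathrm{weak}}(G)\subset[0,+\infty]$, resp.\ $\mathrm{Rot}_{\mathrm{weak}}(G)\subset[-\infty,0]$, each of which contradicts the membership in $\mathrm{Rot}_{\mathrm{weak}}(G)$ of $qr_{1}-p<0$, resp.\ $qr_{2}-p>0$. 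Hence $G$ has a fixed point, which is (1).

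For (2), I would begin from $\mathrm{Rot}(F)\subset\mathrm{Rot}_{\mathrm{weak}}(F)$ (Proposition \ref{prop:properties of weak rotation number}(3)), whence $\mathrm{Cl}(\mathrm{Rot}(F))\subset\mathrm{Cl}(\mathrm{Rot}_{\mathrm{weak}}(F))$. Combining (1) with the standard fact (used in the proof of Proposition \ref{thm:FP}) that a fixed point of $F^{q}\circ T^{-p}$ projects to a periodic point of $f$ of rotation number $p/q$ when $p/q$ is in lowest terms, one obtains: whenever $r_{1}<r_{2}$ lie in $\mathrm{Rot}_{\mathrm{weak}}(F)$, choosing a compact $K$ with $r_{1},r_{2}\in\mathrm{Rot}_{\mathrm{weak},K}(F)$ (possible since $\mathrm{Rot}_{\mathrm{weak},\cdot}(F)$ is monotone in the compact set and $\mathrm{Rot}_{\mathrm{weak}}(F)=\bigcup_{K}\mathrm{Rot}_{\mathrm{weak},K}(F)$) and applying (1) to any $p/q\in(r_{1},r_{2})$ in lowest terms, we get $p/q\in\mathrm{Rot}(F)$. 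Thus every rational strictly between two elements of $\mathrm{Rot}_{\mathrm{weak}}(F)$ lies in $\mathrm{Rot}(F)$. Put $a=\inf\mathrm{Rot}_{\mathrm{weak}}(F)$ and $b=\sup\mathrm{Rot}_{\mathrm{weak}}(F)$ in $[-\infty,+\infty]$. If $a<b$, then $(a,b)\cap\mathbb{Q}\subset\mathrm{Rot}(F)\subset\mathrm{Rot}_{\mathrm{weak}}(F)\subset[a,b]$, so both closures equal the real interval with endpoints $a$ and $b$, and $\mathrm{Rec}^{+}(f)\neq\emptyset$ in this case. If $a=b$, then $\mathrm{Rot}_{\mathrm{weak}}(F)=\{a\}$; if $\mathrm{Rec}^{+}(f)\neq\emptyset$ then Corollary \ref{cor:rec not empty implies rot neq empty} gives $\emptyset\neq\mathrm{Rot}(F)\subset\{a\}$, hence $a\in\mathbb{R}$ and $\mathrm{Cl}(\mathrm{Rot}(F))=\mathrm{Cl}(\mathrm{Rot}_{\mathrm{weak}}(F))=\{a\}$, while if $\mathrm{Rec}^{+}(f)=\emptyset$ then $\mathrm{Rot}(F)=\emptyset$ and either $a\in\mathbb{R}$, which is exactly the stated exceptional case, or $a=\pm\infty$, in which case $\mathrm{Rot}_{\mathrm{weak}}(F)$ has no finite point and, closures being taken in $\mathbb{R}$, $\mathrm{Cl}(\mathrm{Rot}_{\mathrm{weak}}(F))=\emptyset=\mathrm{Cl}(\mathrm{Rot}(F))$. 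This covers all cases.

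The step I expect to be the main obstacle is identifying, for (1), this clean route through Theorem \ref{thm:GS} and Lemma \ref{lem:gamma < f'gamma, then [0,+infty]}: a direct brick-decomposition argument modelled on Lemma \ref{lem:Franks theorem 2.1} runs into the difficulty that a sequence realizing a weak rotation number need not return to a neighbourhood of a single brick, so one cannot immediately extract a brick $B$ with $B\prec T^{m}(B)$ for some $m\neq0$. A secondary point requiring care is the bookkeeping with $\pm\infty$: one has to check that part (1) and the sign conditions on $qr_{i}-p$ remain meaningful and still yield contradictions when $r_{i}$ is infinite, and that the only genuinely exceptional configuration in (2) is $\mathrm{Rec}^{+}(f)=\emptyset$ together with $\mathrm{Rot}_{\mathrm{weak}}(F)$ reduced to a single real number.
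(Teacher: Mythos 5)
Your part (1) is essentially the paper's own proof: reduce to a fixed point of $F^q\circ T^{-p}$, use Lemma \ref{lem:Fq satisfies the intersection property} and Theorem \ref{thm:GS} (the free essential circle being excluded by the intersection property), and contradict Lemma \ref{lem:gamma < f'gamma, then [0,+infty]} with $qr_1-p<0<qr_2-p$ coming from Proposition \ref{prop:properties of weak rotation number}(4). The generic part of your (2) — rationals strictly between two weak rotation numbers are rotation numbers of periodic points, hence $\mathrm{Cl}(\mathrm{Rot}_{\mathrm{weak}}(F))=\mathrm{Cl}(\mathrm{Rot}(F))$ when $\mathrm{Rot}_{\mathrm{weak}}(F)$ is not a single point, and the singleton-real case splits according to whether $\mathrm{Rec}^+(f)$ is empty — also coincides with the paper.

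There is, however, a genuine gap in your treatment of the case $\mathrm{Rot}_{\mathrm{weak}}(F)=\{+\infty\}$ or $\{-\infty\}$. You dispose of it by declaring that, closures being taken in $\mathbb{R}$, both $\mathrm{Cl}(\mathrm{Rot}_{\mathrm{weak}}(F))$ and $\mathrm{Cl}(\mathrm{Rot}(F))$ are empty, hence equal. This changes the content of the statement rather than proving it: the weak rotation set lives in $[-\infty,+\infty]$ and is always nonempty (Proposition \ref{prop:properties of weak rotation number}(2)), the exceptional clause only covers the case of a singleton \emph{in} $\mathbb{R}$, and the assertion that the two closures are ``the same intervals'' is meant to exclude the scenario where the weak rotation set is an infinite singleton while $\mathrm{Rot}(F)=\emptyset$. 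Accordingly, the paper must — and does — prove that $\mathrm{Rot}_{\mathrm{weak}}(F)$ can never be reduced to $\{+\infty\}$ or $\{-\infty\}$, and this is the hardest part of its proof of (2): if some nontrivial invariant continuum contains one end but not the other, one passes to the prime end compactification of the complementary invariant simply connected domain, glues on a half-open annulus to build a new map $f'$ still having the intersection property, observes that the weak rotation set of a lift $F'$ contains both the prime-end rotation number and $+\infty$, and then uses assertion (1) to produce periodic points of $f$, a contradiction; if no such continuum exists, both ends admit isolating Jordan domains, and the Le Calvez--Yoccoz theorem on Lefschetz indices of iterates combined with the Lefschetz formula on $\mathbf{S}^2$ (the indices at $N$ and $S$ would be non-positive but must sum to $2$) gives the contradiction. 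None of this is captured by your closure-in-$\mathbb{R}$ convention, so as it stands your proof of (2) omits the essential new argument and does not establish the proposition in the sense the paper uses it.
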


\begin{proof}
To prove (1), consider the map $f^q$ and its lift $F'=F^q\circ
T^{-p}$. We must prove that $F'$ has a fixed point. We suppose that
$F'$ has no fixed point. By Lemma \ref{lem:Fq satisfies the
intersection property}, $f^q$ satisfies the intersection property.
By Theorem \ref{thm:GS}, there exists an essential line of
$\mathbb{A}$ joining $S$ to $N$ that lifts to a Brouwer line
$\Gamma$ of $F'$. Then either $\Gamma<F'(\Gamma)$ or
$F'(\Gamma)<\Gamma$. By Lemma \ref{lem:gamma < f'gamma, then
[0,+infty]}, we have either
$\mathrm{Rot}_{\mathrm{weak}}(F)\subset[0,+\infty]$ or
$\mathrm{Rot}_{\mathrm{weak}}(F)\subset[-\infty,0]$. However, by the
assertion (4) of Proposition \ref{prop:properties of weak rotation
number}, we have $qr_1-p\in\mathrm{Rot}_{\mathrm{weak}\,K}(F')$ and
$qr_2-p\in\mathrm{Rot}_{\mathrm{weak}\,K}(F')$ with
$qr_1-p<0<qr_2-p$, which is impossible. Therefore, $F'$ has a fixed
point.
\smallskip

By the assertion (3) of Proposition \ref{prop:properties of weak
rotation number}, we know that the inclusion
$\mathrm{Cl}(\mathrm{Rot}(F))\subset\mathrm{Cl}(\mathrm{Rot_{\mathrm{weak}}}(F))$
is true. If $\mathrm{Rot}_{\mathrm{weak}}(F)$ is reduced to a real
number and if $\mathrm{Rec}^+(f)\neq\emptyset$, then by Corollary
\ref{cor:rec not empty implies rot neq empty}, $\mathrm{Rot}(F)$ is
not empty and therefore is equal to
$\mathrm{Rot}_{\mathrm{weak}}(F)$. To get (2), it is sufficient
first to prove that
$\mathrm{Cl}(\mathrm{Rot_{\mathrm{weak}}}(F))\subset\mathrm{Cl}(\mathrm{Rot}(F))$
in the case where $\mathrm{Rot}_{\mathrm{weak}}(F)$ is not reduced
to a point and then to explain why $\mathrm{Rot}_{\mathrm{weak}}(F)$
can not be reduced to $\{+\infty\}$ or $\{-\infty\}$.

Suppose that $\mathrm{Rot}_{\mathrm{weak}}(F)$ is not reduced to a
point and denote by $I$ the interior of its convex hull. According
to (1), we have
$$\mathrm{Cl}(\mathrm{Rot_{\mathrm{weak}}}(F))\subset\mathrm{Cl}(I)=\mathrm{Cl}
(I\cap\mathbb{Q})\subset\mathrm{Cl}(\mathrm{Rot}_{\mathrm{per}}(F))
\subset\mathrm{Cl}(\mathrm{Rot}(F)),$$ where
$\mathrm{Rot}_{\mathrm{per}}(F)$ is the set of rotation numbers of
periodic points.
\smallskip

Now we turn to explain why $\mathrm{Rot}_{\mathrm{weak}}(F)$ can not
be reduced to $\{+\infty\}$ or $\{-\infty\}$.

We state the first case, the other case is similar. Consider the
sphere $\mathbf{S}^2=\mathbb{A}\sqcup\{N,S\}$. Suppose first that
there exists a non trivial invariant continuum $K$ that contains the
end $N$ but not the end $S$. Consider the connected component $U$ of
$\mathbf{S}^2\setminus K$ that contains $S$. It is simply connected
(property of plane topology) and invariant by $f$. We can define the
\emph{prime end compactification} of $U$, introduced by
Carath\'{e}odory \cite{C}, by adding a circle $\mathbf{S}^1$. A very
good exposition of the theory of prime ends in modern terminology
can be found in the paper \cite{M} of Mather.

The prime end compactification can be defined purely topologically
but has another significance if we put a complex structure on
$\mathbf{S}^2$. We can find a conformal map $\phi$ between U and the
open disk $\mathbb{D}=\{z\in \mathbb{C}\mid |z|<1\}$ and we put on
$U\sqcup\mathbf{S}^1$ the topology (up to homeomorphism of the
resulting space, is independent of $\phi$) induced from the natural
topology of $\mathrm{Cl}(\mathbb{D})$ in $\mathbb{C}$ by the
bijection
$$\overline{\phi}:U\sqcup\mathbf{S}^1\rightarrow \mathrm{Cl}(\mathbb{D})$$
equal to $\phi$ on $U$ and to the identity on $\mathbf{S}^1$.

As $U$ is invariant by $f$, the map $f|_U$ can be extended to a
homeomorphism $\bar{f}$ of $U\sqcup\mathbf{S}^1$. Moreover,
$\bar{f}$ restricted to the prime ends set $\mathbf{S}^1$ is an
orientation preserving homeomorphism of the circle. Therefore, we
can define its rotation number, say $\rho\in
\mathbb{R}/\mathbb{Z}$.\smallskip

We take off $S$ from $U\sqcup\mathbf{S}^1$, and then paste the
annulus $\mathbf{S}^1\times[0,+\infty)$ and
$U\sqcup\mathbf{S}^1\setminus\{S\}$ together along $\mathbf{S}^1$.
We define now $f'$ as the extension of the map $\bar{f}$ to the new
open annulus $\mathbb{A}'$ as follows:
\begin{equation*}f'(x,y)=
\begin{cases}\bar{f}(x,y)& \textrm{if} \quad (x,y)\in U\sqcup\mathbf{S}^1\setminus\{S\};
\\(g(x),y)& \textrm{if} \quad
(x,y)\in\mathbf{S}^1\times[0,+\infty).\end{cases}
\end{equation*}
where $g$ is the map induced by $\bar{f}$ on $\mathbf{S}^1$. By the
construction of $f'$, the map $f'$ satisfies the intersection
property. Indeed, we suppose that $\gamma$ is an essential circle in
$\mathbb{A}'$. If $\gamma$ does not meet $S^1\times[0,+\infty)$, it
must meet its image because $f$ satisfies the intersection property.
Suppose now that $\gamma$ meets $S^1\times[0,+\infty)$. There exists
a real number $r\geq 0$ such that $\gamma $ meets $S^1\times\{r\}$
but does not meet $S^1\times(r,+\infty)$. This implies that
$f'(\gamma)$ and $f'^{-1}(\gamma)$ do not meet
$S^1\times(r,+\infty)$. As a consequence, one deduces that a point
$z\in S^1\times\{r\}$ cannot be strictly below the circles
$f'(\gamma)$ and $f'^{-1}(\gamma)$. This implies that $f'(\gamma)$
meets $\gamma$.\smallskip

For any lift $F'$ of $f'$ to the universal cover of $\mathbb{A}'$,
we have the following facts:
\begin{itemize}
 \item there exists $\rho\in\mathbb{R}$ such that
 $\mathrm{Rot}_{\mathrm{weak},\mathbf{S}^1\times\{r\}}(F')=\{\rho\}$ for every $r\geq 0$;
 \item $\mathrm{Rot}_{\mathrm{weak},K'}(F')=\{+\infty\}$ if the set $K'$ is a compact set of
 $U\setminus\{S\}$ that contains an essential circle.
\end{itemize}
So the set $\mathrm{Rot}_{\mathrm{weak}}(F')$ is not reduced to a
point. The assertion (1) implies that there are many periodic points
with a rotation number different from $\rho$. They correspond to
periodic points of our initial map $f$. We have a
contradiction.\smallskip

This implies that there exists a neighborhood of $N$ that is an
\emph{isolating Jordan domain} (see \cite{LY} for the details). Of
course, we have a similar situation for $S$. Observe that there are
no periodic orbits except $N$ and $S$ and that the intersection
property guarantees that $N$ and $S$ are neither sinks nor sources,
thanks to Le Calvez-Yoccoz's Theorem \cite{LY}, there exists an
integer $k$ such that the Lefschetz indices $i(f^k, S)$ and
$i(f^k,N)$ are non positive. But because of the Lefschetz formula,
we know that the sum is $2$ (the Euler characteristic of
$\mathbf{S}^2$). We have a contradiction.
\end{proof}

Observe that the proofs of Lemma \ref{lem:commute and topological
line} and Lemma \ref{lem:Fq satisfies the intersection property} do
not use the existence of recurrent points. To adapt the line
translation theorem with our new hypothesis, we only need to prove a
result similar to Lemma \ref{lem:bounded condition}.

\begin{lem}\label{lem:bounded condition weak}Let $f\in \mathrm{Homeo}_*^\wedge(\mathbb{A})$. We suppose that
$F$ is a lift of $f$ to $\mathbb{R}^2$ and that
$$\mathrm{Rot}_{\mathrm{weak}}(F)\subset [-q+2,q-2],$$
where $q\geq2$, then there exists an oriented essential line
$\gamma$ in $\mathbb{A}$ joining $S$ to $N$ that is lifted to an
essential line $\Gamma$ in $\mathbb{R}^2$ which satisfies
$$T^{-q}(\Gamma) < F(\Gamma) < T^{q}(\Gamma).$$
\end{lem}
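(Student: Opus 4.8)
The plan is to mimic the proof of Lemma~\ref{lem:bounded condition} almost verbatim, replacing every appeal to a recurrent point and its rotation number by an appeal to the weak rotation set together with Lemma~\ref{lem:gamma < f'gamma, then [0,+infty]} and part~(1) of Proposition~\ref{prop:more properties of the weak rotation number}. First, since $\mathrm{Rot}_{\mathrm{weak}}(F)\subset[-q+2,q-2]$, the lift $F\circ T^{q-1}$ cannot be fixed point free only if $\mathrm{Rot}_{\mathrm{weak}}(F\circ T^{q-1})\subset[1,2q-3]$ fails to contain $0$; indeed $0\in\mathrm{Rot}_{\mathrm{weak}}(F\circ T^{q-1})$ would force, via part~(2) of Proposition~\ref{prop:properties of weak rotation number} applied to a separating compact set, or more simply via part~(1) of Proposition~\ref{prop:more properties of the weak rotation number}, a fixed point of $F\circ T^{q-1}$, hence a periodic point of $f$ with rotation number $1-q$ for $F$, contradicting $\mathrm{Rot}_{\mathrm{weak}}(F)\subset[-q+2,q-2]$ and part~(3) of Proposition~\ref{prop:properties of weak rotation number}. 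So $F\circ T^{q-1}$ is fixed point free, and Theorem~\ref{thm:GS} gives an essential line $\gamma$ joining $S$ to $N$ lifting to a Brouwer line $\Gamma$ with $\Gamma<(F\circ T^{q-1})(\Gamma)$, whence $\Gamma<T(\Gamma)<(F\circ T^{q})(\Gamma)$ and $F'=F\circ T^{q}$ is conjugate to a translation, exactly as before.

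Next, form the annulus $\mathbb{A}'=\mathbb{R}^2/F'$ and the induced map $t$; then $F^{-1}\circ T^{q}=F'^{-1}\circ T^{2q}$ is a fixed-point-free lift of $t^{2q}$, so Theorem~\ref{thm:GS} splits into the same three cases as in Lemma~\ref{lem:bounded condition}. In case~(1) I argue exactly as in the original proof: one obtains an essential line $\Gamma'$ with $T^{-q}(\Gamma')<F(\Gamma')<T^{q}(\Gamma')$, checks that $\Gamma'$ is essential using that $F'$ is conjugate to a translation and a compactness argument, and then concludes by Lemma~\ref{lem:commute and topological line} with $H_1=T$, $H_2=T^{q}\circ F^{-1}$, $H_3=T^{q}\circ F$, $q_1=2q$, $q_2=q_3=1$, producing the desired line $\Gamma''$. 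Case~(3) is also unchanged: it yields a free essential circle for $t$, hence (via Lemma~\ref{lem:Fq satisfies the intersection property} applied to $F\circ T^{q-1}$ acting on $\mathbb{R}^2$, exactly as before) a segment $\Gamma_0\subset\Gamma'$ on which $F''=F\circ T^{q-1}$ and all nonzero powers of $T$ are free, with $F''(\Gamma_0)\cap T^{-1}(\Gamma_0)\neq\emptyset$; then one applies Lemma~\ref{lem:Franks theorem 2.1} to get a fixed point of $F''$, a contradiction.

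The only genuinely new work is case~(2). In the original proof one fixed a recurrent point $z$ with rotation number $\rho$, transported the pair of sequences $(m_i,n_i)$ realizing $\rho$ through the covering $\pi_q$, and derived a contradiction with the inequality $\Gamma'<(F\circ T^q)(\Gamma')<(F^{m}\circ T^{n})^{q}(\Gamma')$ valid when $|n/m|\le q-2/m$. Here I will instead observe that case~(2) gives a line $\Gamma'$ with $\Gamma'<(F\circ T^q)(\Gamma')$ \emph{and} $\Gamma'<(F\circ T^{-q})(\Gamma')$, and that $\Gamma'$ is a lift of an essential line joining $S$ to $N$ — so Lemma~\ref{lem:gamma < f'gamma, then [0,+infty]} applies to $\Gamma'$ and to the lifts $F\circ T^{q}$ and $F\circ T^{-q}$, giving $\mathrm{Rot}_{\mathrm{weak}}(F\circ T^{q})\subset[0,+\infty]$ and $\mathrm{Rot}_{\mathrm{weak}}(F\circ T^{-q})\subset[0,+\infty]$, i.e.\ by part~(4) of Proposition~\ref{prop:properties of weak rotation number} both $\mathrm{Rot}_{\mathrm{weak}}(F)+q\subset[0,+\infty]$ and $\mathrm{Rot}_{\mathrm{weak}}(F)-q\subset[0,+\infty]$, forcing $\mathrm{Rot}_{\mathrm{weak}}(F)\subset[q,+\infty]\cap[-\infty,-q]\,$... which is impossible whenever $\mathrm{Rot}_{\mathrm{weak}}(F)\neq\emptyset$, and nonemptiness is part~(2) of Proposition~\ref{prop:properties of weak rotation number} (take $K$ a compact set separating $N$ from $S$). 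The hard part will be checking the hypotheses of Lemma~\ref{lem:gamma < f'gamma, then [0,+infty]} cleanly — in particular that $\Gamma'$ genuinely is a lift of an essential line joining the two ends, and that the ``$\Gamma<T(\Gamma)$'' hypothesis is available — but this is precisely the essentiality verification already carried out in case~(1) and transplants without difficulty; this replacement of the recurrence argument by the weak-rotation-set argument is the whole point of the lemma.
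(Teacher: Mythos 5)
Your overall frame (fixed-point-freeness of $F\circ T^{q-1}$, Theorem \ref{thm:GS}, the auxiliary annulus $\mathbb{A}'=\mathbb{R}^2/F'$, and the treatment of case (1)) agrees with the paper, but your handling of case (2) has a genuine gap. You want to apply Lemma \ref{lem:gamma < f'gamma, then [0,+infty]} to $\Gamma'$ with the lifts $F\circ T^{\pm q}$, but that lemma needs $\Gamma'<T(\Gamma')$ and hypothesis (3) (a compact set of $\mathbb{A}$ whose preimage is covered by the $T$-translates of the strip between $\Gamma'$ and $T(\Gamma')$), i.e.\ in effect that $\Gamma'$ projects to an essential line of $\mathbb{A}$. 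The line $\Gamma'$ of case (2) is only a lift of an essential line of $\mathbb{A}'=\mathbb{R}^2/F'$; nothing makes it disjoint from its $T$-translates, and the essentiality check of case (1) does \emph{not} transplant: there it rested on the two-sided squeeze $T^{-q}(\Gamma')<F(\Gamma')<T^{q}(\Gamma')$, which is precisely what case (2) lacks (from $\Gamma'<(F\circ T^{-q})(\Gamma')$ one gets $T^{q}(\Gamma')<F(\Gamma')$, so $F$ pushes $\Gamma'$ beyond its $T^{q}$-translate instead of between translates). The paper's proof avoids $T$-periodicity of $\Gamma'$ altogether: it places the compact set $\widetilde K_q=\pi^{-1}(\bigcup_{0\le i<q}f^i(K))\cap\mathrm{Cl}(R(T^{-q}(\Gamma))\cap L(\Gamma))$ inside finitely many $F'$-fundamental strips, i.e.\ between $\Gamma'$ and $(F\circ T^q)^p(\Gamma')$, uses the combinatorial inequality $\Gamma'<(F\circ T^q)^p(\Gamma')<(F^m\circ T^n)^{q}(\Gamma')$ for $|n/m|\le q-2p/m$, and transfers weak rotation data through $\mathbb{A}_q=\mathbb{R}^2/T^q$ to reach the contradiction; some substitute of this kind is needed. (Also, your final arithmetic should give $\mathrm{Rot}_{\mathrm{weak}}(F)\subset[q,+\infty]$, not $[q,+\infty]\cap[-\infty,-q]$; this slip is harmless since either contradicts the hypothesis together with nonemptiness.)

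Case (3) is not ``unchanged'' either. The old argument concludes via Lemma \ref{lem:Franks theorem 2.1}, which requires \emph{two} free sets drifting in opposite $T$-directions; the segment $\Gamma_0$ only supplies the $T^{-1}$ direction, and the second set was built from a positively recurrent point with rotation number in $[1,2q-3]$ — exactly the object that may fail to exist under your hypotheses. Indeed the only new content of this lemma is the case $\mathrm{Rec}^+(f)=\emptyset$ with $\mathrm{Rot}_{\mathrm{weak}}(F)$ reduced to a point, to which the paper first reduces using Proposition \ref{prop:more properties of the weak rotation number}(2) together with Lemma \ref{lem:bounded condition}; you skip this reduction and then implicitly rely on recurrence. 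The paper's case (3) instead builds an essential circle $\delta$ from an arc joining $\Gamma'$ to $T(\Gamma')$ (in this case $\Gamma'$ really is free for $T$, so the hypotheses are available) and derives the contradiction by evaluating $\mathrm{Rot}_{\mathrm{weak},\delta}(F\circ T^{q-1})$ in two ways via Lemma \ref{lem:gamma < f'gamma, then [0,+infty]} — which is where your idea of invoking that lemma actually does work, rather than in case (2).
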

\begin{proof} According to the assertion (2) of Proposition \ref{lem:gamma < f'gamma, then [0,+infty]}
 and Lemma \ref{lem:bounded condition}, we only
need to consider the case where $\mathrm{Rec}^+(f)=\emptyset$ and
$\mathrm{Rot}_{\mathrm{weak}}(F)$ is reduced to a real number
$\rho$.

By the assertion (4) of Proposition \ref{prop:properties of weak
rotation number}, we have $\mathrm{Rot}_{\mathrm{weak}}(F\circ
T^{q-1})=\{\rho+q-1\}\subset [1,2q-3]$. Of cause, the map $F\circ
T^{q-1}$ has no fixed point. We choose a compact set $K$ such that
$\mathrm{Rot}_{\mathrm{weak},K}(F\circ
T^{q-1})=\{\rho+q-1\}$. 
The map $f$ satisfying the intersection property, by Theorem
\ref{thm:GS} and Lemma \ref{lem:gamma < f'gamma, then [0,+infty]},
there is an oriented essential line $\gamma$ of $\mathbb{A}$ joining
$S$ to $N$ that is lifted to an essential Brouwer line $\Gamma$ of
$F\circ T^{q-1}$ which satisfies
$$\Gamma<(F\circ T^{q-1})(\Gamma).$$

Like in the proof of Lemma \ref{lem:bounded condition}, we deduce
that $F'=F\circ T^{q}$ is conjugate to a translation. Here again, we
consider the annulus $\mathbb{A}' = \mathbb{R}^2/F'$, the
homeomorphism $t$ induced by $T$ on $\mathbb{A}'$ and the lift
$F^{-1}\circ T^{q}= F'^{-1}\circ T^{2q}$ of $t^{\,2q}$. As it is
fixed point free, here again Theorem \ref{thm:GS} asserts that there
are three possible cases:

\begin{enumerate}
  \item There exists an oriented essential line $\gamma'$ of
$\mathbb{A}'$ that is lifted to an oriented line $\Gamma'$ in
$\mathbb{R}^2$ such that $\Gamma'<F'(\Gamma')$ and
$\Gamma'<(F^{-1}\circ T^{q})(\Gamma')$;
  \item There exists an oriented essential line $\gamma'$ of
$\mathbb{A}'$ that is lifted to an oriented line $\Gamma'$ in
$\mathbb{R}^2$ such that $\Gamma'<F'(\Gamma')$ and $(F^{-1}\circ
T^{q})(\Gamma')<\Gamma'$;
  \item There exists an essential circle in $\mathbb{A}'$ that is free
for $t^{\,2q}$.
\end{enumerate}
\bigskip

In the case (1), the arguments that we gave in the proof of Lemma
\ref{lem:bounded condition} permit us to construct an essential line
of $\mathbb{A}$ that satisfies the conclusion of the lemma.

%
%
%
%
\bigskip

In the case (2), we will show how to get a contradiction.

The line $\Gamma'$ satisfies $\Gamma'<(F\circ T^q)(\Gamma')$ and
$\Gamma'<(F\circ T^{-q})(\Gamma')$. Here again, the set $X$of couple
of integers $(m,n)$ such that $\Gamma'<(F^m\circ T^n)(\Gamma')$
contains the set of couples $(m, nq)$, where $m>0$ and $\vert n\vert
\leq m$ (see the proof of Lemma \ref{lem:bounded condition}). Let
$p$ be any given positive integer. For every couple $(m,n)$ of
integers such that $m>\frac{p}{q}$ and $\vert \frac{n}{m}\vert\leq
q-\frac{2p}{m}$, we have $qm-p>0$ and $\vert n-p\vert\leq\vert
n\vert+p\leq qm-p$. Therefore, we have $q(m,n)-p(1,q)\in X$, which
means
\begin{equation}\label{ineq:Gamma''}
    \Gamma'<(F\circ T^q)^p(\Gamma')< (F^m\circ T^n)^{q}(\Gamma').
\end{equation}

Let $\widetilde{K}_q=\pi^{-1}(\bigcup_{0\leq i<q}f^i(K))\cap
\mathrm{Cl}(R(T^{-q}(\Gamma))\cap L(\Gamma))$. Since $F'=F\circ T^q$
is conjugate to a translation, $\gamma'$ is an essential line in
$\mathbb{A}'$, we can always suppose that the compact set
$\widetilde{K}_q$ is contained in the region
$\mathrm{Cl}(R(\Gamma'))\cap L((F\circ T^q)^p(\Gamma'))$ for some
positive integer $p$ by replacing $\Gamma'$ with an iterate
$F'^k(\Gamma')$ if necessary. Consider the homeomorphism $f_q$ of
the annulus $\mathbb{A}_q=\mathbb{R}^2/T^q$ lifted by $F$ and write
$\pi_q:\mathbb{R}^2\rightarrow \mathbb{A}_q$ for the covering map.
Similarly to the proof of the assertion (4) of Proposition
\ref{prop:properties of weak rotation number}, we can find a
sequence $\{\widetilde{z}_k\}_{k\geq1}\subset\widetilde{K}_q$, a
sequence $\{m_k\}_{k\geq1}$ of positive integers and a sequence
$\{n_k\}_{k\geq1}$ of integers such that
\begin{eqnarray*}
   &\bullet& \lim_{k\rightarrow+\infty}m_k=+\infty;\qquad\qquad\qquad\qquad\qquad\qquad\qquad\qquad
   \qquad\qquad\qquad\qquad\qquad\\
   &\bullet& F^{qm_k}(\widetilde{z}_k)\in T^{-qn_k}(\widetilde{K}_q)\quad\mathrm{for\,\,\, every}\,\,\, k\geq1; \\
   &\bullet& \frac{-qn_k}{qm_k}\in ]-q+1,q-1[\quad\mathrm{when}\,\,\, k\,\,\,\mathrm{is\,\,\,
   large\,\,\, enough}.
\end{eqnarray*}

Therefore, there is a positive integer $N$ such that when $k\geq N$,
we have $m_k>2p>\frac{p}{q}$ and
$\vert\frac{n_k}{m_k}\vert<q-1<q-\frac{2p}{m_k}$.

By the inequation (\ref{ineq:Gamma''}), we have $\Gamma'<(F\circ
T^q)^p(\Gamma')< (F^{m_k}\circ T^{n_k})^{q}(\Gamma')$ when $k\geq
N$. On the one hand, the points of the sequence $\{(F^{m_k}\circ
T^{n_k})^{q}(\widetilde{z}_k)\}_{i\geq N}$ belong to $R((F\circ
T^q)^p(\Gamma'))$. On the other hand, the points $(F^{m_k}\circ
T^{n_k})^{q}(\widetilde{z}_k)$ belong to $\widetilde{K}_q\subset
L((F\circ T^q)^p(\Gamma'))$, which is a contradiction.
\bigskip

It remains to study the case (3) and try to find a contradiction.

Again, the arguments in the proof of Lemma \ref{lem:bounded
condition} permit us to construct a line $\Gamma'$ of $\mathbb{R}^2$
satisfying
\begin{itemize}
  \item $\Gamma'\cap T(\Gamma')=\emptyset$;
  \item $T^{-q}(\Gamma')=F(\Gamma')$.
\end{itemize}
Endow $\Gamma'$ with the orientation such that $\Gamma'<T(\Gamma')$.
As $\Gamma'$ is a Brouwer line of $T$, we can find an arc $\Delta_0$
that joins a point $z\in\Gamma'$ to $T(z)$ whose image is between
the two lines $\Gamma'$ and $T(\Gamma')$, and does not intersect the
lines but at the extremities. We deduce that $\Delta=\bigcup_{k\in
Z}T^k(\Delta_0)$ is the preimage of an essential circle $\delta$ of
$\mathbb{A}$. We know that
$\mathrm{Rot}_{\mathrm{weak},\delta}(F)=\{\rho\}$ (the intersection
property of $f$ guarantees that it is not empty). By (4) of
Proposition \ref{prop:properties of weak rotation number} and the
properties of $\Gamma'$, the map $F''=F\circ T^{q-1}$ satisfies the
following properties:
\begin{itemize}
\item $F''(\Gamma')=T^{-1}(\Gamma')<\Gamma'$;
\item $\Delta\subset \bigcup_{k\in\mathbb{Z}}T^k(\mathrm{Cl}(R(\Gamma))\cap
L(T(\Gamma)))$;
\item $\mathrm{Rot}_{\mathrm{weak},\delta}(F'')=\{\rho+q-1\}\subset
[1,2q-3]$.
\end{itemize}

By Lemma \ref{lem:gamma < f'gamma, then [0,+infty]}, the first two
items imply that $\mathrm{Rot}_{\mathrm{weak},\delta}(F'')\subset
[-\infty,0]$, which is contrary to the third item. We have completed
the proof.
\end{proof}

Replacing the notation $\mathrm{Cl}(\mathrm{Rot}(F))$ in the proof
of Theorem \ref{thm:the line translation theorem} by the notation
$\mathrm{Rot}_{\mathrm{weak}}(F)$, we have the following similar
theorem.

\begin{thm}[Generalization of the line translation theorem*] \label{thm:the line translation theorem*}
Let $f\in\mathrm{Homeo}_*^\wedge(\mathbb{A})$ and $F$ be a lift of
$f$ to $\mathbb{R}^2$. Assume that $\mathrm{Rot}_{\mathrm{weak}}(F)$
is contained in a Farey interval $]\frac{p}{q},\frac{p'}{q'}[$.
Then, there exists an essential line $\gamma$ in $\mathbb{A}$ such
that the lines $\gamma,f(\gamma),\cdots, f^{q+q'-1}(\gamma)$ are
pairwise disjoint. Moreover, the cyclic order of these lines is the
same as the cyclic order of the $q+q'-1$ first iterates of a
vertical line $\{\theta\}\times\mathbb{R}$ under the rigid rotation
with angle $\rho$, for any $\rho\in]\frac{p}{q},\frac{p'}{q'}[$.
\end{thm}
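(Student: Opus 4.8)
The plan is to mimic exactly the proof of Theorem \ref{thm:the line translation theorem}, replacing every invocation of hypotheses about $\mathrm{Rot}(F)$ and $\mathrm{Rec}^+(f)$ by the corresponding statements about $\mathrm{Rot}_{\mathrm{weak}}(F)$, and replacing the key Lemma \ref{lem:bounded condition} by Lemma \ref{lem:bounded condition weak}. Since the proof of Theorem \ref{thm:the line translation theorem} only used three ingredients — Lemma \ref{lem:commute and topological line}, Lemma \ref{lem:Fq satisfies the intersection property}, and Lemma \ref{lem:bounded condition} — and the first two have already been observed not to use recurrence, the whole argument goes through once the weak analogue of the third is in hand, which it now is.

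More precisely, I would first prove the weak analogue of Corollary \ref{thm:(0,1)}: if $\mathrm{Rot}_{\mathrm{weak}}(F)\subset\,]0,1[$, then there is a free essential line for $f$. By the assertion (4) of Proposition \ref{prop:properties of weak rotation number}, scaling and translating gives $\mathrm{Rot}_{\mathrm{weak}}(F^{2n}\circ T^{-n})\subset[-n+2,n-2]$ for $n$ large (here one uses that $\mathrm{Rot}_{\mathrm{weak}}(F)$, being contained in the compact interval $]0,1[$ after taking closure, is bounded away from $0$ and $1$). Lemma \ref{lem:Fq satisfies the intersection property} gives that $f^{2n}\in\mathrm{Homeo}_*^\wedge(\mathbb{A})$, and Lemma \ref{lem:bounded condition weak} produces an essential line $\Gamma$ with $T^{-n}(\Gamma)<(F^{2n}\circ T^{-n})(\Gamma)<T^n(\Gamma)$, i.e. $\Gamma<F^{2n}(\Gamma)<T^{2n}(\Gamma)$. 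Then Lemma \ref{lem:commute and topological line} with $H_1=F$, $H_2=F^{-1}\circ T$, $q_1=q_2=2n$ yields $\Gamma'<F(\Gamma')<T(\Gamma')$, so $\pi(\Gamma')$ is the desired free essential line.

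For the general Farey interval $]\frac pq,\frac{p'}{q'}[$, I would set $\Phi=T^{-p}\circ F^q$ and $\Psi=T^{p'}\circ F^{-q'}$ and repeat verbatim the second half of the proof of Theorem \ref{thm:the line translation theorem}: invoke Proposition 4.1 of \cite{B1} (valid in the open annulus) to reduce to finding an essential line with a lift disjoint from its $\Phi$- and $\Psi$-images; use assertion (4) of Proposition \ref{prop:properties of weak rotation number} to get $\mathrm{Rot}_{\mathrm{weak}}(F^{qq'}\circ T^{-pq'})\subset\,]0,1[$; apply the weak Corollary just proved to get $\Gamma'<(F^{qq'}\circ T^{-pq'})(\Gamma')<T(\Gamma')$; deduce $\Gamma'<\Phi^{q'}(\Gamma')$, $\Gamma'<T(\Gamma')$, and, after applying $T^{pq'-qp'}$ and using $qp'-pq'=1$, also $\Gamma'<\Psi^q(\Gamma')$; and finally apply Lemma \ref{lem:commute and topological line} with $H_1=\Phi$, $H_2=\Psi$, $H_3=T$, $q_1=q'$, $q_2=q$, $q_3=1$ to obtain $\Gamma''$ with $\Gamma''<\Phi(\Gamma'')$, $\Gamma''<\Psi(\Gamma'')$, $\Gamma''<T(\Gamma'')$. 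Then $\pi(\Gamma'')$ satisfies the conclusion.

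Since Lemma \ref{lem:bounded condition weak} has already been established in the excerpt, there is essentially no obstacle remaining: the only point requiring a moment of care is checking that the arithmetic manipulations of rotation sets (scaling by $2n$, $qq'$, translating by integers, intersecting with a Farey interval) are all covered by assertion (4) of Proposition \ref{prop:properties of weak rotation number}, and that the cited Proposition 4.1 of \cite{B1} indeed transfers to the open-annulus setting — both of which are already asserted earlier. The proof is therefore a near-verbatim transcription, and the statement follows.
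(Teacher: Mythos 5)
Your proposal is correct and follows essentially the same route as the paper, which proves Theorem \ref{thm:the line translation theorem*} precisely by rerunning the proof of Theorem \ref{thm:the line translation theorem} with $\mathrm{Rot}_{\mathrm{weak}}(F)$ in place of $\mathrm{Cl}(\mathrm{Rot}(F))$, i.e.\ Lemma \ref{lem:bounded condition weak} replacing Lemma \ref{lem:bounded condition}, together with Lemmas \ref{lem:commute and topological line} and \ref{lem:Fq satisfies the intersection property} and assertion (4) of Proposition \ref{prop:properties of weak rotation number}. The only blemish is your parenthetical justification that containment in $]0,1[$ yields containment in some $[\frac1n,1-\frac1n]$ (the interval $]0,1[$ is not compact), but this rescaling step is used in exactly the same implicit way in the paper's own one-line proof, so your argument matches it.
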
\smallskip

\section{appendix}
\begin{lem}\label{subsec:positively
recurrent}Let $(X,d)$ be a metric space and $f: X\rightarrow X$ be a
continuous map. A positively recurrent point of $f$ is also a
positively recurrent point of $f^q$ for all $q\in
\mathbb{N}$.\end{lem}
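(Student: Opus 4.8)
The plan is to unwind the definition of positive recurrence and produce, for a given recurrent point $z$, a subsequence of the orbit of $z$ under $f^q$ that returns to $z$. Recall that $z$ is positively recurrent for $f$ means there is a strictly increasing sequence of integers $\{n_k\}_{k\geq 1}$ with $n_k\to+\infty$ and $f^{n_k}(z)\to z$ as $k\to+\infty$ (equivalently, $z$ lies in its own $\omega$-limit set). First I would fix $q\in\mathbb{N}$; the cases $q=0$ and $q=1$ are trivial, so assume $q\geq 2$.

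The key step is a pigeonhole argument on residues modulo $q$. Writing each $n_k=q\,\ell_k+r_k$ with $r_k\in\{0,1,\dots,q-1\}$, since there are only finitely many possible residues there is some fixed $r\in\{0,\dots,q-1\}$ and an infinite set of indices $k$ with $r_k=r$. Passing to that subsequence (and relabelling), we get $n_k=q\ell_k+r$ with $\ell_k\to+\infty$ and $f^{q\ell_k}(f^{r}(z))=f^{n_k}(z)\to z$. So $z$ is in the $\omega$-limit set of $f^r(z)$ under the map $g:=f^q$; I would like instead to say $z$ is in its own $\omega$-limit set under $g$.

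To bridge that gap, I would apply the same pigeonhole idea a second time, but now starting from the point $w:=f^r(z)$, which satisfies $g^{\ell_k}(w)\to z$. Consider the sequence of integers $\ell_k$ modulo $q$: again by pigeonhole there is a residue $s$ and an infinite subset of indices with $\ell_k\equiv s\pmod q$. Hmm — that still compares $w$ and $z$, not $z$ with itself. A cleaner route: note $g^{\ell_k}(w)\to z$ implies, applying the continuous map $f^{r}$, that $f^{r}\bigl(g^{\ell_k}(w)\bigr)=g^{\ell_k}\bigl(f^{r}(w)\bigr)=g^{\ell_k}(f^{2r}(z))\to f^{r}(z)=w$. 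Iterating, $g^{\ell_k}(f^{jr}(z))\to f^{(j-1)r}(z)$ for each $j\geq 1$. Taking $j$ so that $jr\equiv 0\pmod q$ — for instance $j=q$, since then $qr$ is a multiple of $q$ and $f^{qr}(z)=g^{r}(z)$ — gives $g^{\ell_k}(g^{r}(z))\to g^{q-1}(f^{r}(z))\cdots$; this is getting circular. The honest fix is: from $g^{\ell_k}(w)\to z$ and continuity of $g^m$ for the fixed $m$ with $0\le m<q$ chosen so that $r+m\equiv 0\pmod q$ (i.e.\ $m=q-r$ if $r\neq0$, $m=0$ if $r=0$), apply $g^{m}=f^{qm}$: but $f$ applied $r$ then $qm$ times is not $z$ either. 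The genuinely correct and simplest argument is to apply $f^{r}$ directly: $f^{r}(g^{\ell_k}(w)) = g^{\ell_k}(f^{r}(w)) = g^{\ell_k}(f^{2r}(z))$, and also $f^{r}(g^{\ell_k}(w))\to f^r(z)=w$ by continuity, so $g^{\ell_k}(f^{2r}(z))\to w$; continuing this $q$ times one reaches $g^{\ell_k}(f^{qr}(z)) \to f^{(q-1)r}(z)$, but $f^{qr}(z)=g^{r}(z)$ and $g^{\ell_k}(g^r(z)) = g^{r}(g^{\ell_k}(z))$, while on the other side $g^{\ell_k}(z)$ has a convergent subsequence by recurrence-type compactness only if $X$ is compact, which we cannot assume.

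Given that subtlety, the real obstacle — and what I would present as the heart of the proof — is precisely showing $z\in\omega_g(z)$ rather than merely $z\in\omega_g(f^r(z))$. The clean way around it avoids chasing the orbit: simply run the first pigeonhole argument on the returning times $n_k$ but additionally demand, by a double pigeonhole, that both $n_k\equiv r\pmod q$ and $n_{k+1}-n_k\equiv 0 \pmod q$ is not quite enough; instead observe that among the $n_k$ we may, after passing to a subsequence, arrange $n_k \equiv 0 \pmod q$ directly \emph{provided} we use that the $n_k$ form an infinite set and that differences $n_{k'}-n_k$ of return times are again return times of $z$ in the sense that $f^{n_{k'}-n_k}$ maps a point near $z$ near $z$. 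Concretely: by continuity, for $k$ large $f^{n_k}(z)$ is close to $z$, hence $f^{n_{k'}}(z)=f^{n_{k'}-n_k}(f^{n_k}(z))$ is close to $f^{n_{k'}-n_k}(z)$; so choosing $k'\gg k$ with $n_{k'}\equiv n_k \pmod q$ (possible by pigeonhole on residues), the integer $m_j := n_{k_{j+1}}-n_{k_j}$ is a positive multiple of $q$, $m_j\to+\infty$, and a standard $\varepsilon$-argument shows $f^{m_j}(z)\to z$. Writing $m_j=q\,\ell_j$ gives $g^{\ell_j}(z)\to z$ with $\ell_j\to+\infty$, i.e.\ $z$ is positively recurrent for $g=f^q$, which completes the proof. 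I expect the $\varepsilon$-bookkeeping in this last step (controlling $d(f^{m_j}(z),z)$ via the triangle inequality and uniform-continuity-free continuity estimates at the single point $z$) to be the only place requiring care, and it is entirely routine.
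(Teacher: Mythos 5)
Your final argument---the only route you do not abandon along the way---rests on the claim that if $k'\gg k$ and $n_{k'}\equiv n_k\pmod q$, then $f^{\,n_{k'}-n_k}(z)$ is close to $z$, because $f^{\,n_{k'}}(z)=f^{\,n_{k'}-n_k}\bigl(f^{\,n_k}(z)\bigr)$ is close to $z$ and $f^{\,n_k}(z)$ is close to $z$. This is exactly the step you defer to ``routine $\varepsilon$-bookkeeping'', and it is where the proof breaks. To compare $f^{\,n_{k'}-n_k}(z)$ with $f^{\,n_{k'}-n_k}\bigl(f^{\,n_k}(z)\bigr)$ you need continuity of the single map $f^{m}$, $m=n_{k'}-n_k$, at $z$, i.e.\ a $\delta$ depending on $m$ and hence on the \emph{later} index $k'$; but how close $f^{\,n_k}(z)$ is to $z$ was already fixed when $k$ was chosen. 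The quantifiers cannot be ordered consistently: for fixed $k$, letting $k'\to\infty$ gives no control on $d\bigl(f^{m}(z),f^{m}(f^{\,n_k}(z))\bigr)$, since the moduli of continuity of the maps $f^{m}$ at $z$ may degenerate as $m\to\infty$ (there is no compactness or uniform continuity in the hypotheses); for fixed $k'$, only finitely many earlier returns are available and none of them need be close enough. Indeed, for merely continuous maps of a metric space, differences of return times need not be return times at all: one can arrange a positively recurrent $z$ with witnessing times $n_k$ all congruent to $r$ modulo $q$ such that every iterate $f^{\,n_{k'}-n_k}(z)$ stays at distance at least $1$ from $z$, the recurrence of $z$ under $f^q$ being realized at times outside the difference set. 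So the lemma cannot be reached along the line you propose, and what you call routine is precisely the missing content.

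The paper's proof deals with this point by a different device: after the same pigeonholing to a residue $p$, it \emph{composes $q$ successive return maps} instead of subtracting two return times, choosing the return times and shrinking balls $O_{m_j}$ centred at $z$ alternately. Given a return time $l_{k_j}q+p$ with $f^{\,l_{k_j}q+p}(z)\in O_{m_j}$, continuity of that single, already fixed map provides a smaller ball $O_{m_{j+1}}$ with $f^{\,l_{k_j}q+p}(O_{m_{j+1}})\subset O_{m_j}$; because each neighbourhood is chosen \emph{after} the corresponding return time, the quantifier problem disappears. Composing $q$ consecutive such maps yields an iterate of exponent $q\bigl(p+\sum_j l_{k_j}\bigr)$, a multiple of $q$, which sends $z$ (the common centre, hence a point of the innermost ball) into an arbitrarily small ball around $z$; letting the blocks run off to infinity gives $z\in\mathrm{Rec}^+(f^q)$. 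If you want to salvage your write-up, replace the difference trick by this adaptive nesting of neighbourhoods together with the passage from differences to sums of $q$ returns whose total exponent is divisible by $q$.
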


\begin{proof}
If $z\in \mathrm{Rec}^+(f)$, let $O_i=\{z'\in X\mid
d(z,z')<\frac{1}{i}\}$ for $i\in \mathbb{N}\setminus\{0\}$. We
suppose that $f^{n_k}(z)\rightarrow z$ when $k\rightarrow +\infty$.
Write $n_k=l_kq+p_k$ where $0\leq p_k <q$. If there are infinitely
many $k$ such that $p_k=0$, we are done. Otherwise, there are
infinitely many $k$ such that $p_k=p$ where $0< p <q$. We can
suppose that $f^{l_kq+p}(z)\rightarrow z$ when $k\rightarrow
+\infty$ by considering subsequence if necessary. We suppose that
$f^{l_{k_1}q+p}(z)\in O_{m_1}$, then there exists $O_{m_2}$ such
that $f^{l_{k_1}q+p}(O_{m_2})\subset O_{m_1}$. Similarly, there
exists $l_{k_2}$ and $O_{m_3}$ such that
$f^{l_{k_1}q+p}(O_{m_3})\subset O_{m_2}$. By induction, there is a
subsequence $(l_{k_j})_{j\geq1}$ of $(l_{k})_{k\geq1}$ and a
subsequence $\{O_{m_j}\}_{j\geq1}$ of $\{O_m\}_{m\geq1}$ such that
$f^{l_{k_j}q+p}(O_{m_{j+1}})\subset O_{m_{j}}$. Consider the
subsequence $\{f^{q(p+\sum_{j=(t-1)q}^{tq-1}l_{k_j})}(z)\}_{t\geq
1}$, we are done.\end{proof}

\begin{lem}\label{lem:rec(fqq)}Let $f\in \mathrm{Homeo_*(\mathbb{A})}$ and $F$ be a lift
 of $f$ to $\mathbb{R}^2$. Define the homeomorphism
$f_q$ of the annulus $\mathbb{A}_q=\mathbb{R}^2/T^q$ lifted by $F$
and write $\pi_q:\mathbb{R}^2\rightarrow \mathbb{A}_q$ for the
covering map. If $z\in\mathrm{Rec}^+(f)$ and $\widetilde{z}\in
\pi^{-1}(z)$, then $\pi_q(\widetilde{z})\in
\mathrm{Rec}^+(f_q^q)$.\end{lem}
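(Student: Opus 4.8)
The plan is not to attack $f_q^q$ directly, but to show first that $\hat z:=\pi_q(\widetilde{z})$ is positively recurrent for $f_q$ itself, and then to invoke Lemma \ref{subsec:positively recurrent}, applied to the metric space $\mathbb{A}_q$ and the homeomorphism $f_q$, to conclude $\hat z\in\mathrm{Rec}^+(f_q^q)$.

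I would begin by recording the elementary covering-space picture. Let $p\colon\mathbb{A}_q\to\mathbb{A}$ be the $q$-fold covering determined by $\pi=p\circ\pi_q$; its deck group is cyclic of order $q$, generated by the homeomorphism $t$ of $\mathbb{A}_q$ induced by $T$. As $f$ is isotopic to the identity, $F$ commutes with $T$, and passing to the quotient $\mathbb{A}_q=\mathbb{R}^2/T^q$ this gives $f_q\circ t=t\circ f_q$; moreover $p\circ f_q=f\circ p$. Fix a metric $d$ on $\mathbb{A}_q$, and note that the fibre over $z$ is $p^{-1}(z)=\{t^i\hat z:0\le i<q\}$.

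The key step is to place a point of the fibre $p^{-1}(z)$ in the forward $\omega$-limit set $\omega_{f_q}(\hat z)$. Since $z\in\mathrm{Rec}^+(f)$, choose integers $n_k\to+\infty$ with $f^{n_k}(z)\to z$. Then $p\bigl(f_q^{n_k}(\hat z)\bigr)=f^{n_k}(z)\to z$, so for $k$ large $f_q^{n_k}(\hat z)$ lies over a small evenly covered neighbourhood $U$ of $z$, hence in exactly one of the $q$ sheets of $p^{-1}(U)$; indexing the sheets so that the one containing $t^i\hat z$ is $U_i$, call that sheet $U_{i(k)}$. Letting $U$ shrink to $\{z\}$ one checks that $i(k)$ is eventually independent of $U$ and that $d\bigl(f_q^{n_k}(\hat z),t^{i(k)}\hat z\bigr)\to0$. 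A pigeonhole argument then furnishes a single $j\in\{0,\dots,q-1\}$ and a subsequence along which $i(k)=j$; along this subsequence $f_q^{n_k}(\hat z)\to t^j\hat z$, so $t^j\hat z\in\omega_{f_q}(\hat z)$.

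Finally I would close the loop, using that $\omega$-limit sets are closed and forward invariant and that $f_q$ commutes with $t$. From $t^j\hat z\in\omega_{f_q}(\hat z)$ and forward invariance we get $\omega_{f_q}(t^j\hat z)\subseteq\omega_{f_q}(\hat z)$; and $f_q^n\circ t^j=t^j\circ f_q^n$ gives $\omega_{f_q}(t^j\hat z)=t^j\,\omega_{f_q}(\hat z)$, which contains $t^j(t^j\hat z)=t^{2j}\hat z$. Hence $t^{2j}\hat z\in\omega_{f_q}(\hat z)$, and iterating this argument $t^{mj}\hat z\in\omega_{f_q}(\hat z)$ for every $m\ge1$; taking $m=q$, so that $t^{mj}=(t^q)^j=\mathrm{id}$, yields $\hat z\in\omega_{f_q}(\hat z)$, i.e.\ $\hat z\in\mathrm{Rec}^+(f_q)$, whence $\hat z\in\mathrm{Rec}^+(f_q^q)$ by Lemma \ref{subsec:positively recurrent}. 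I expect the only point requiring genuine care to be the covering-space bookkeeping of the third paragraph — checking that the sheet index $i(k)$ is well defined and stabilizes so that a subsequence really converges to a single point $t^j\hat z$ of the fibre rather than merely accumulating on it; the commutation $f_q\circ t=t\circ f_q$ and the closure argument in the finite cyclic group then make the rest routine.
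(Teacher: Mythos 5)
Your argument is correct, but it takes a genuinely different route from the paper's. The paper works upstairs in $\mathbb{R}^2$: starting from $z\in\mathrm{Rec}^+(f^q)$ it writes the return data as $T^{-i_n}\circ F^{qj_n}(\widetilde z)\to\widetilde z$, pigeonholes the residue of $i_n$ modulo $q$, and then composes $q$ nested returns (the same shrinking-neighborhood device used in Lemma \ref{subsec:positively recurrent}) so that the total translation offset becomes a multiple of $q$; this produces recurrence of $\pi_q(\widetilde z)$ under $f_q^q$ directly, without ever mentioning $f_q$ alone. You instead work downstairs in the intermediate annulus $\mathbb{A}_q$: the sheet-pigeonhole places some fibre point $t^j\hat z$ in $\omega_{f_q}(\hat z)$, and then closedness and forward invariance of $\omega$-limit sets, together with $f_q\circ t=t\circ f_q$ and $t^q=\mathrm{id}$, force $t^{mj}\hat z\in\omega_{f_q}(\hat z)$ for all $m$ and hence $\hat z\in\mathrm{Rec}^+(f_q)$, after which Lemma \ref{subsec:positively recurrent} applied to $f_q$ gives the statement. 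All the steps you flag check out: the sheet index is well defined and consistent across nested evenly covered neighbourhoods, the induction $t^{mj}\hat z\in\omega_{f_q}(\hat z)$ is valid, and the commutations $p\circ f_q=f\circ p$, $f_q\circ t=t\circ f_q$ hold because $F$ commutes with $T$. What each approach buys: the paper's proof is self-contained and explicitly parallels its other appendix lemma, at the cost of repeating the somewhat fiddly nested-neighbourhood bookkeeping; yours proves the stronger intermediate fact that recurrence lifts to the finite covering (i.e.\ $\hat z\in\mathrm{Rec}^+(f_q)$, not just $\mathrm{Rec}^+(f_q^q)$) and is conceptually cleaner, but it uses Lemma \ref{subsec:positively recurrent} as a black box and shifts the delicate point to the covering-space sheet argument you identify.
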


\begin{proof}To prove the lemma, we must find two
sequences $\{i_n\}_{n\geq 1}$ and $\{j_n\}_{n\geq 1}$ such that
$j_n\rightarrow+\infty$ and
\begin{equation}\label{rotation number for T'2q}
    T^{-qi_n}\circ
    F^{qj_n}(\widetilde{z})\rightarrow\widetilde{z}\quad(\mathrm{when}\quad
    n\rightarrow+\infty).
\end{equation}

Since $z\in \mathrm{Rec}^+(f^q)$, there exist two sequences of
integers $\{i_n\}_{n\geq 1}$ and $\{j_n\}_{n\geq 1}$  such that
$j_n\rightarrow+\infty$ and $T^{-i_n}\circ
F^{qj_n}(\widetilde{z})\rightarrow\widetilde{z}$ when $n$ tends to
$+\infty$. Let $\widetilde{O}_i=\{\widetilde{z}'\in \mathbb{R}^2\mid
d(\widetilde{z},\widetilde{z}')<\frac{1}{i}\}$ for
$i\in\mathbb{N}\setminus\{0\}$ where $d$ is the Euclidean metric of
$\mathbb{R}^2$. We suppose that $F^{qj_n}(\widetilde{z})\in
T^{i_n}(\widetilde{O}_{n})$ for every $n$ by considering subsequence
if necessary. Write $i_n=qs_n+t_n$ where $0\leq t_n <q$. If there
are infinitely many $n$ such that $t_n=0$, we are done. Otherwise,
there are infinitely many $n$ such that $t_n=p$ where $0< p <q$. We
can suppose that $F^{qj_n}(\widetilde{z})\in
T^{qs_n+p}(\widetilde{O}_{n})$ for every $n$ by further considering
subsequence if necessary.

We begin with $F^{qj_{n_1}}(\widetilde{z})\in T^{qs_{n_1}+p}(
\widetilde{O}_{n_1})$, then there exists $\widetilde{O}_{n_2}$ such
that $F^{qj_{n_1}}(\widetilde{O}_{n_2})\subset
T^{qs_{n_1}+p}(\widetilde{O}_{n_1})$. Similarly, there exists
$\widetilde{O}_{n_3}$ such that
$F^{qj_{n_2}}(\widetilde{O}_{n_3})\subset
T^{qs_{n_2}+p}(\widetilde{O}_{n_2})$. By induction, there is a
subsequence $\{j_{n_i}\}_{i\geq1}$ of $\{j_{n}\}_{n\geq1}$, a
subsequence $\{s_{n_i}\}_{i\geq1}$ of $\{s_{n}\}_{n\geq1}$ and a
subsequence $\{\widetilde{O}_{n_i}\}_{i\geq1}$ of $
\{\widetilde{O}_n\}_{n\geq1}$ such that
$F^{qj_{n_{i}}}(\widetilde{O}_{n_{i+1}})\subset
T^{qs_{n_i}+p}(\widetilde{O}_{n_i})$. Consider the subsequence
$\left\{T^{-q(p+\sum_{i=(t-1)q}^{tq-1}s_{n_i})}\circ
F^{q(\sum_{i=(t-1)q}^{tq-1}j_{n_i})}(\widetilde{z})\right\}_{t\geq
1}$, we are done.\end{proof}
\bigskip

\end{document}